\definecolor{darkgreen}{rgb}{0,0.50,0} 
\definecolor{darkred}{rgb}{0.75,0,0}
\definecolor{darkblue}{rgb}{0,0,0.6} 
\definecolor{lightblue}{rgb}{.6,.6,0.9} 
\renewcommand*{\backref}[1]{}
\renewcommand*{\backrefalt}[4]{({%
    \ifcase #1 Not cited.%
          \or On p.~#2%
          \else On pp.~#2%
    \fi%
    })}
\numberwithin{equation}{section}
\newlength{\storeparskip}
\newcommand{\HH}{H}
\newcommand{\Diff}{\textup{Diff}\,}
\newcommand{\Aut}{\textup{Aut}}
\newcommand{\Fib}{\textup{Fib}}
\newcommand{\aexp}{\mathrm{AExp}}
\newcommand{\Exp}{\mathrm{Exp}}
\newcommand{\Log}{\mathrm{Log}}
\newcommand{\auth}{\mathrm{Aut}(\HH)}
\newcommand{\aut}{\mathrm{Aut}(\HH)}
\newcommand{\vaut}{\mathrm{VAut}(\HH)}
\newcommand{\bal}{\mathrm{Bal}(\HH)}
\newcommand{\vf}{\mathrm{VF}(S^3)}
\newcommand{\taut}{T_{\mathrm{id}}\aut}
\newcommand{\diff}{\diffst}
\newcommand{\diffst}{\mathrm{Diff}(S^3)}
\def\makeautorefname#1#2{\expandafter\def\csname#1autorefname\endcsname{#2}}
\newtheorem{theorem}{Theorem}[section]
\newtheorem{introthm}{Theorem}
\newtheorem{cor}{Corollary}[section]
\newtheorem{prop}{Proposition}[section]
\newtheorem{proposition}{Proposition}[section]
\newtheorem{lem}{Lemma}[section]
\theoremstyle{definition}
\newtheorem{defn}{Definition}[section]
\newtheorem{rem}{Remark}[section]
\newtheorem{rems}{Remarks}[section]
\let\c@cor=\c@thm
\let\c@prop=\c@thm
\let\c@lem=\c@thm
\let\c@conj=\c@thm
\let\c@defn=\c@thm
\let\c@claim=\c@thm
\let\c@quest=\c@thm
\let\c@df=\c@thm
\let\c@exmp=\c@thm
\let\c@rem=\c@thm
\let\c@sch=\c@thm
\let\c@con=\c@thm
\let\c@equation\c@thm
\let\c@introthm=\c@thm
 \numberwithin{equation}{section}
\title[Fibrations of $S^3$ by Simple Closed Curves]{Homotopy Type of the Space of Fibrations\\ of the Three-Sphere by Simple Closed Curves}
\author[DeTurck]{Dennis DeTurck}
\address{Department of Mathematics, The University of Pennsylvania}
\email{deturck@math.upenn.edu}
\author[Fang]{Ziqi Fang}
\address{Department of Mathematics, Massachusetts  Institute of Technology}
\email{ziqifang@mit.edu}
\author[Gluck]{Herman Gluck}
\address{Department of Mathematics, The University of Pennsylvania}
\email{gluck@math.upenn.edu}
\author[Lichtenfelz]{Leandro Lichtenfelz}
\address{Department of Mathematics, Wake Forest University}
\email{lichtel@wfu.edu}
\author[Merling]{\\ Mona Merling}
\address{Department of Mathematics, The University of Pennsylvania}
\email{mmerling@math.upenn.edu}
\author[Wang]{Yi Wang}
\address{Department of Mathematics, University of Illinois Urbana-Champaign}
\email{yiwang20@illinois.edu}
\author[Yang]{Jingye Yang}
\address{Bristol Myers Squibb}
\email{jingyey95@gmail.com}
\def\l@subsection{\@tocline{2}{0pt}{2pc}{6pc}{}} \makeatother
\date{}
\begin{document}

\newgeometry{margin=0.7in} 
\begin{titlepage}

\stepcounter{page}

\begin{abstract}
We show that the moduli space of all smooth fibrations of a three-sphere by simple
closed curves has the homotopy type of a disjoint union of a pair of two-spheres if the fibers are
oriented, and of a pair of real projective planes if unoriented, the same as for its finite-dimensional
subspace of Hopf fibrations by parallel great circles.
\newline\newline\noindent This moduli space is the quotient of the diffeomorphism group of the three-sphere (a Fréchet Lie group)
by its subgroup of automorphisms of the Hopf fibration, which we show is a smooth Fréchet
submanifold of the diffeomorphism group. Then we show that the moduli space, already known to be a
 Fréchet manifold by \cite{rubinstein}, can be modeled on the concrete Fréchet space of vector fields on the three-sphere which are ``horizontal'' and ``balanced'' with respect to a given Hopf
fibration, and see how the structure of this moduli space helps us to determine its homotopy type.
\end{abstract}

\maketitle

\begin{figure}[H]
    \centering
    \includegraphics[scale=0.8]{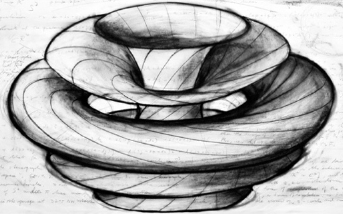}
    \caption{\centering Hopf fibration of the $3$-sphere by parallel great circles. \\ Lun-Yi Tsai, Charcoal and graphite on paper, 2007.}
    \label{fig:HopfTsai}
\end{figure}

\begingroup%
\setlength{\parskip}{\storeparskip}
\tableofcontents
\endgroup%

\end{titlepage}
\restoregeometry 

\section{Introduction}

We begin with the Hopf fibration $\HH \colon S^1 \subset S^3 \xrightarrow{p} S^2$, in which the unit 3-sphere $S^3$ in real 4-space $\mathbb{R}^4$ is filled with parallel great circles, as discovered by Heinz Hopf in 1931. These great circles are the intersections of $S^3$ with the complex lines in the $\mathbb{C}^2$ corresponding to an orthogonal complex structure on $\mathbb{R}^4$ .The
projection map $S^3 \xrightarrow{p} S^2$ was the first example of a homotopically nontrivial map from a sphere
to another sphere of lower dimension, and signaled the birth of homotopy theory.


How many Hopf fibrations of $S^3$ are there? If we orient all the great circle fibers and then focus
on the direction of the fiber through a single point of $S^3$, we can see a two-sphere's worth of Hopf
fibrations with a right-handed screw sense, and another two-sphere's worth with a left-handed
screw sense. There are no more.

In 1983,  \cite{gluck_warner} showed that the much larger (infinite-dimensional) space
of all (topological or smooth) fibrations of $S^3$ by (not necessarily parallel) oriented great circles
deformation retracts to its subspace of Hopf fibrations, and so also has the homotopy type of a pair
of two-spheres. In this paper, "smooth" always means of class $C^\infty$.

Carrying that result to its extreme, we show here that the same homotopy type persists when we
expand our view to include all possible smooth fibrations of $S^3$ by simple closed curves.

\begin{restatable}{introthm}{mainthm}
\label{mainthm}
The space of all smooth fibrations of a three-sphere by simple closed curves, with a $C^{\infty}$ topology, has the homotopy type of a disjoint union of a pair of two-spheres if the fibers are oriented, and of a pair of real projective planes if unoriented, the same as for its finite-dimensional subspace of Hopf fibrations by parallel great circles.
\end{restatable}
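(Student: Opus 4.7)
The strategy is to reduce the infinite-dimensional computation to a finite-dimensional Lie group quotient via the structural results summarized in the abstract. Let $\mathcal{F}$ denote the space of smooth fibrations with the $C^\infty$ topology, and fix a reference Hopf fibration $\HH$. The first step is to show that every smooth fibration of $S^3$ by simple closed curves is diffeomorphic to $\HH$. The base of such a fibration is necessarily $S^2$, and the resulting smooth $S^1$-bundle has Euler number $\pm 1$; by the classification of circle bundles over $S^2$, any such bundle is smoothly equivalent to one of the two Hopf bundles. The two handedness classes furnish the two path components, giving a $\Diff(S^3)$-equivariant identification
\[
  \mathcal{F} \;\cong\; \Diff(S^3)/\aut \;\sqcup\; \Diff(S^3)/\aut,
\]
and, in the oriented version, $\aut$ is replaced by its index-two subgroup $\aut^+$ of fiber-orientation preserving automorphisms.

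Next I would invoke the paper's structural results: since $\aut$ is a smooth Fréchet Lie subgroup of $\Diff(S^3)$ and the quotient $\mathcal{F}$ is a Fréchet manifold locally modeled on horizontal balanced vector fields, the projection $\Diff(S^3) \to \Diff(S^3)/\aut$ is a principal $\aut$-bundle in the Fréchet category, and in particular a Serre fibration. The resulting long exact sequence of homotopy groups reduces the theorem to identifying the homotopy types of $\Diff(S^3)$ and of $\aut$.

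The homotopy type of $\Diff(S^3)$ is provided by Hatcher's theorem on the Smale conjecture, which yields a deformation retraction onto $O(4)$. The analogous statement for $\aut$ is that it deformation retracts onto its group of isometries, which is (up to the fiber-orientation involution) the subgroup $U(2)\subset SO(4)$ stabilizing $\HH$. I would prove this using the concrete horizontal--balanced model: given $\varphi\in\aut$, the derivative of $\varphi$ along fibers and the induced diffeomorphism of $S^2$ can be straightened by an averaging procedure along the $S^1$ fibers, combined with a Moser-type homotopy in the direction of horizontal balanced vector fields, producing a canonical path from $\varphi$ to an isometric automorphism.

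Combining these pieces, each path component of $\mathcal{F}$ (oriented fibers) is homotopy equivalent to $SO(4)/U(2)\simeq S^2$, giving $S^2\sqcup S^2$. The unoriented moduli space is the further quotient by the $\mathbb{Z}/2$ action that reverses orientation on every fiber; this acts on each $SO(4)/U(2)$ as the antipodal involution, producing $\mathbb{RP}^2\sqcup\mathbb{RP}^2$. The principal obstacle I anticipate is the analysis of $\aut$ in Step 3: establishing that this infinite-dimensional Fréchet Lie group deformation retracts onto the compact Lie group $U(2)$ is where the horizontal--balanced vector field description must do real work, as one needs a globally defined, continuous straightening of a diffeomorphism that a priori distorts both the fiber direction and the base.
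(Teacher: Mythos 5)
Your overall architecture matches the paper's: realize $\mathrm{Fib}(S^3)$ as the base of the Fr\'echet fiber bundle $\aut \hookrightarrow \diffst \to \mathrm{Fib}(S^3)$, use Hatcher's theorem for $\diffst$, identify the homotopy type of the fiber $\aut$, and conclude by comparing with the finite-dimensional sub-bundle $U(2) \subset O(4) \to S^2 \sqcup S^2$. The genuine gap is in your Step 3, the assertion that $\aut$ deformation retracts onto $U(2)$ via ``an averaging procedure along the $S^1$ fibers, combined with a Moser-type homotopy.'' An element of $\aut$ induces an arbitrary orientation-preserving diffeomorphism of the base $S^2$, so any retraction of $\aut$ onto $U(2)$ must in particular retract $\mathrm{Diff}^+(S^2)$ onto $SO(3)$ --- that is Smale's theorem, a deep result that no fiberwise averaging or Moser argument will produce, and it appears nowhere in your outline. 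The paper isolates exactly this difficulty by introducing the ``small bundle'' $\vaut \hookrightarrow \aut \to \mathrm{Diff}^+(S^2)$ and comparing it with $S^1 \subset U(2) \to SO(3)$: the fiber inclusion $S^1 \subset \vaut$ is a homotopy equivalence by an elementary argument (deform each fiberwise circle diffeomorphism to a rotation, then use that smooth maps $S^2 \to S^1$ are null-homotopic to make the rotation angle constant), the base inclusion is a homotopy equivalence by Smale's theorem, and the Five Lemma then gives that $U(2) \subset \aut$ is a weak homotopy equivalence --- which suffices, since no actual deformation retraction of $\aut$ is ever needed.

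Two smaller points. First, $\diffst$ acts transitively on the set of all smooth fibrations of both handedness (quaternionic conjugation is orientation-reversing and interchanges the two Hopf fibrations), so $\mathcal{F} \cong \diffst/\aut$ as a single orbit; the two components arise because $\diffst$ has two components while $\aut$ is connected, and your formula $\diffst/\aut \sqcup \diffst/\aut$ double-counts. Second, the long exact sequence and Five Lemma only yield a \emph{weak} homotopy equivalence $S^2 \sqcup S^2 \to \mathrm{Fib}(S^3)$; promoting this to a homotopy equivalence requires verifying that $\mathrm{Fib}(S^3)$ is a metrizable Fr\'echet manifold (second countability of the quotient) and invoking Palais's theorem, a step your outline omits.
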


We position our story in the world of (typically infinite-dimensional) metrizable Fr\'echet manifolds and smooth maps between them, give a brief introduction to this in \autoref{preliminaries}, and profit later because in this world, weak homotopy equivalences can be promoted to homotopy equivalences. In particular, the group $\diffst$ of all diffeomorphisms of the three-sphere lives in this world, and is a Fr\'echet Lie group.

Inside that group is the subgroup $\aut$ of automorphisms of the Hopf fibration $\HH$, meaning diffeomorphisms of $S^3$ which permute the fibers of $\HH$, with stretching and  compressing allowed.

To confirm that this subgroup lives in the Fr\'echet world, we prove

\begin{introthm}
\label{submanifold_thm}
The subgroup $\aut$ of automorphisms of a Hopf fibration $\HH$ is a smooth Fr\'echet submanifold of $\diffst$.
\end{introthm}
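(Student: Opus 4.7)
Since $\aut$ is a subgroup of $\diffst$, left-translation $L_\alpha(\phi) = \alpha \circ \phi$ is a smooth diffeomorphism of $\diffst$ carrying $\aut$ to itself, so it suffices to produce a submanifold chart in a neighborhood of the identity. The first step is to identify the expected tangent space $\taut \subset \vf = T_{\Id}\diffst$. Let $\xi$ be the Hopf vector field — the infinitesimal generator of the Hopf $S^1$-action. Differentiating the condition that $\phi$ preserves the line field spanned by $\xi$ along a one-parameter family through $\Id$, a vector field $X \in \vf$ should lie in $\taut$ exactly when $[X,\xi] = g\xi$ for some $g \in C^\infty(S^3)$. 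This is equivalent to projectability: $dp(X)$ descends to a well-defined vector field on $S^2$. Decomposing $X = X^V + X^H$ into Hopf-vertical and -horizontal parts, projectability amounts to $X^H$ being invariant under the Hopf $S^1$-action, while $X^V$ is unconstrained.

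Next, I would exhibit a topological complement to $\taut$. Fiberwise averaging over the Hopf circles via the $S^1$-action defines a continuous linear projection $P : \vf \to \taut$ sending $X^V + X^H$ to $X^V$ plus the $S^1$-average of $X^H$. Its kernel $W := \ker P$ consists of horizontal vector fields whose fiberwise averages vanish — precisely the horizontal balanced vector fields singled out in the abstract. Since $P$ is a continuous projection, $\vf = \taut \oplus W$ is a topological direct sum of closed Fréchet subspaces.

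Fix a round metric on $S^3$ and let $E : U \subset \vf \to \diffst$ denote the Riemannian-exponential chart $E(X)(q) = \exp_q(X(q))$, a smooth local diffeomorphism from a neighborhood of $0$ onto a neighborhood of $\Id$. I would then study the multiplication map
\[
\mu : (\aut \cap \mathcal{O}) \times \mathcal{V} \to \diffst, \qquad \mu(\alpha, w) = \alpha \circ E(w),
\]
where $\mathcal{O}$ is a small neighborhood of $\Id$ in $\diffst$ and $\mathcal{V} \subset W$ is a small neighborhood of $0$. The linearization of $\mu$ at $(\Id,0)$ is $(T,w) \mapsto T + w$, which is exactly the linear topological isomorphism $\taut \oplus W \xrightarrow{\sim} \vf$ supplied by the splitting above. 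Showing that $\mu$ is a local diffeomorphism will make $E(\mathcal{V})$ into a transverse slice and identify $\aut \cap \mathcal{O}$ with the coordinate subspace $\{w = 0\}$, giving the required submanifold chart.

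The essential obstacle is justifying that $\mu$ is a local diffeomorphism in the Fréchet category, where the classical inverse function theorem does not apply. I plan to avoid Nash–Moser by inverting $\mu$ explicitly from the Hopf geometry: given $\phi$ sufficiently close to $\Id$, each Hopf fiber $\gamma$ is mapped to a simple closed curve $\phi(\gamma)$ whose projection to $S^2$ is a small loop, and the canonical horizontal motion carrying $\phi(\gamma)$ onto the Hopf fiber through the centroid of this loop should single out a unique small element of $E(\mathcal{V})$, with the residue after composing with its inverse automatically lying in $\aut \cap \mathcal{O}$. Showing that this geometric decomposition is smooth and two-sided inverse to $\mu$ is the bulk of the work; once that is achieved, left-translation propagates the submanifold chart at $\Id$ to all of $\aut$, completing the proof.
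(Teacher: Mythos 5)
Your identification of the tangent space ($[X,\xi]=g\xi$, equivalently projectability) and of the complement (horizontal fields with vanishing fiberwise average) matches the paper exactly: the $S^1$-average you describe extracts precisely the frequency-$2$ modes $c_1\cos 2t+c_2\sin 2t$ in the $(B,C)$-frame, so your $W$ is the paper's space of balanced fields $\taut^{\perp}$ and your projection $P$ is its $L^2$-orthogonal projection. The gap is in the chart construction. Your map $\mu(\alpha,w)=\alpha\circ E(w)$ has $\aut\cap\mathcal{O}$ as a factor of its domain, but that set is exactly the object whose Fr\'echet manifold structure is in question: you cannot speak of $\mu$ being a ``local diffeomorphism,'' nor of its ``linearization at $(\Id,0)$,'' nor conclude that $\aut\cap\mathcal{O}$ is ``the coordinate subspace $\{w=0\}$,'' until you have exhibited a homeomorphism from $\aut\cap\mathcal{O}$ onto an open subset of the Fr\'echet space $\taut$ that is compatible with the smooth structure of $\diffst$. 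That parametrization is the actual content of the theorem, and the obvious candidate fails: the ordinary Riemannian exponential does \emph{not} carry projectable fields into $\aut$, because for $X\in\taut$ with mixed vertical and horizontal components the geodesics directed by $X$ do not move Hopf fibers coherently to Hopf fibers. So even granting your geometric inversion of $\mu$, the ``residue'' $\alpha=\phi\circ E(w)^{-1}$ lands in a set with no coordinates on it, and no submanifold chart in Hamilton's sense has been produced.

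This is precisely what the paper's \emph{aligned Riemannian exponential map} $\aexp$ is built to repair: it first flows each point along its Hopf fiber by the vertical component and then moves it horizontally by a suitably rotated horizontal component, so that $\aexp(X)\in\aut$ \emph{if and only if} $X\in\taut$, and $\aexp$ carries a neighborhood of $0$ in $\taut\oplus\taut^{\perp}$ onto a neighborhood of $\Id$ with $\taut\oplus 0$ going exactly onto $\aut$. The remaining work (the $\mathrm{Rescale}$ and $\mathrm{Twist}$ lemmas) shows $\aexp$ and $\Exp$ induce the same smooth structure on $\diffst$, which is what makes this a legitimate chart. Your centroid-based inversion is essentially the paper's separate slice theorem (Theorem C), which requires its own convexity argument for smooth dependence; but even with it in hand, your argument still needs a substitute for $\aexp$ to coordinatize the $\aut$ factor. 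Supply that, and the rest of your outline goes through.
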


Fr\'echet manifolds are modeled after Fr\'echet (vector) spaces, which can then be viewed as their tangent spaces at distinguished points. For example, the tangent space to $\diffst$ at the identity is the Fr\'echet space of all smooth vector fields on the three-sphere. We can write it as
\begin{equation*}
T_{\mathrm{id}}\diffst = \mathrm{VF}(S^3)
\end{equation*}
and give it the $L^2$ inner product defined by the integral of the pointwise inner product,
\begin{equation*}
\langle X, Y \rangle = \int_{S^3} X(x) \cdot Y(x)\, \mathrm{d(Vol)}.
\end{equation*}

\newpage

The \emph{Riemannian exponential map} takes a portion of the tangent space
$T_{\mathrm{id}}\,\diffst = \mathrm{VF}(S^3)$ to a portion of the diffeomorphism group $\diffst$, and provides
coordinate charts. A useful variant of this, the \emph{aligned Riemannian exponential map}, is defined
in \autoref{submanifoldsection}, proved there to give the same Fr\'echet structure on $\diffst$, and then used to good effect in the proof of \autoref{submanifold_thm} above.

Likewise, the tangent space to the subgroup $\aut$ at the identity consists of smooth vector fields on the three-sphere which project down by $p_*$ to well-defined vector fields on the base space $S^2$ of the Hopf fibration. This means that their ``vertical'' components -- those parallel
to the Hopf fibers -- are unconstrained, but that their ``horizontal'' components orthogonal to the Hopf fibers point systematically from each Hopf fiber to another Hopf fiber. We make this precise in \autoref{preliminaries}. Because of this alternative description, we sometimes write the tangent space to $\aut$ at the identity as
\begin{equation*}
\taut = \mathrm{PVF}(S^3)
\end{equation*}
which we refer to as the space of \emph{projectable vector fields}.

The $L^2$-orthogonal complement to $\taut$ consists of smooth vector fields $V$ on $S^3$ which
are horizontal and which integrate to zero along each Hopf fiber $F$. We refer to these vector fields
as \emph{balanced}, denote the subspace of them by $\mathrm{BVF}(S^3)$, and prove that the direct sum
\begin{equation*}
\mathrm{PVF}(S^3) \oplus \mathrm{BVF}(S^3) = \mathrm{VF}(S^3)
\end{equation*}
gives all of $\mathrm{VF}(S^3)$. Applying the standard Riemannian exponential map to a neighborhood of the zero field in $\mathrm{BVF}(S^3)$ gives us a local Fr\'echet submanifold $\bal$ of $\diffst$, our balanced diffeomorphisms of $S^3$.


We denote set of all smooth fibrations of $S^3$ by oriented simple closed curves by $\mathrm{Fib}(S^3)$. Because all of these fibrations are equivalent
to one another by diffeomorphisms of $S^3$, we have a sequence
\begin{equation*}
\aut \hookrightarrow \diffst \to \mathrm{Fib}(S^3).
\end{equation*}

Both $\aut$ and $\diffst$ with their $C^\infty$ topologies are Fr\'echet manifolds, and as a start for
$\mathrm{Fib}(S^3)$ to join this world, we give it the quotient topology from the above projection. We regard the local Fr\'echet submanifold $\bal$ of $\diffst$ as the most natural home for a smooth slice to prove the smooth Fr\'echet bundle structure, and devote \autoref{slice_section} to providing this:

\begin{introthm}
\label{slice_thm}
The sequence $\aut \hookrightarrow \diffst \to \mathrm{Fib}(S^3)$ admits a smooth slice consisting of balanced diffeomorphisms, and is therefore a smooth fiber bundle of Fr\'echet manifolds.
\end{introthm}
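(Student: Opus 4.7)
The plan is to produce a smooth local slice for the right action of $\aut$ on $\diffst$ near the identity coset, using the balanced diffeomorphisms $\bal$, and then translate it by the $\diffst$-action on itself to obtain local trivializations everywhere in $\mathrm{Fib}(S^3)$.

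For the local slice at $\mathrm{id}$, I would study the multiplication map
$$\mu \colon \bal \times \aut \longrightarrow \diffst, \qquad (b,a) \longmapsto b \circ a.$$
Its derivative at $(\mathrm{id},\mathrm{id})$ is the addition map $\mathrm{BVF}(S^3) \oplus \mathrm{PVF}(S^3) \to \mathrm{VF}(S^3)$, which is a topological linear isomorphism by the direct sum decomposition $\mathrm{PVF}(S^3) \oplus \mathrm{BVF}(S^3) = \mathrm{VF}(S^3)$ recalled in the introduction. The main step is to promote this infinitesimal isomorphism to an honest local diffeomorphism of Fréchet manifolds. Rather than invoking a Nash-Moser type inverse function theorem, I expect the aligned Riemannian exponential $\aexp$ from \autoref{submanifoldsection} to provide a concrete inverse by design: for $V = V_b + V_p$ in a neighborhood of $0 \in \mathrm{VF}(S^3)$ with $V_b \in \mathrm{BVF}(S^3)$ and $V_p \in \mathrm{PVF}(S^3)$, the construction of $\aexp$ should factor $\aexp(V)$ smoothly as $\exp(V_b) \circ a(V)$ with $a(V) \in \aut$ depending smoothly on $V$. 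Equivalently, $\exp$ restricted to a neighborhood of $0 \in \mathrm{BVF}(S^3)$ (which defines $\bal$) is transverse to the $\aut$-orbits and meets each nearby orbit exactly once.

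From this I would read off the slice property: $\mu$ is a smooth local diffeomorphism onto a neighborhood $U$ of $\mathrm{id}$ in $\diffst$, and every $\phi \in U$ has a unique decomposition $\phi = b \circ a$ with $b \in \bal$ and $a \in \aut$. Composing $\mu^{-1}$ with the first projection gives a smooth map $U \to \bal$ that is constant on right $\aut$-cosets, hence descends to a smooth section $\sigma \colon \pi(U) \to \bal \subset \diffst$ of the projection $\pi \colon \diffst \to \mathrm{Fib}(S^3)$. This shows that $\bal$ near $\mathrm{id}$ is a smooth slice consisting of balanced diffeomorphisms.

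To obtain local trivializations near an arbitrary fibration $F = (\phi_0)_*\HH$, I would left-translate by $\phi_0$: since left multiplication is smooth in the Fréchet Lie group $\diffst$, the translate $\phi_0\bal$ is a smooth local slice meeting the coset $\phi_0\aut$ in $\phi_0$, and the map $(\phi_0 b, a) \mapsto \phi_0 \circ b \circ a$ is a local diffeomorphism from $\phi_0\bal \times \aut$ onto an $\aut$-invariant neighborhood of $\phi_0$. Covering $\mathrm{Fib}(S^3)$ by such neighborhoods exhibits $\pi$ as a smooth principal $\aut$-bundle of Fréchet manifolds. The hardest step is the first one, producing a genuine smooth local inverse to $\mu$ at $(\mathrm{id},\mathrm{id})$ within the Fréchet category, where inverse function theorems are generally delicate; my expectation is that the aligned exponential was set up precisely to make this step explicit, since the splitting $\mathrm{VF}(S^3) = \mathrm{BVF}(S^3) \oplus \mathrm{PVF}(S^3)$ is already baked into its definition and the submanifold result of \autoref{submanifold_thm} already handled the analogous analysis on the projectable factor.
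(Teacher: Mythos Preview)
Your proposal has a genuine gap at precisely the step you flag as hardest, and the paper's route is quite different.

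Your plan hinges on inverting the multiplication map $\mu\colon \bal \times \aut \to \diffst$ near $(\mathrm{id},\mathrm{id})$. You correctly note that its differential is the linear isomorphism $\mathrm{BVF}(S^3)\oplus\mathrm{PVF}(S^3)\to\mathrm{VF}(S^3)$, but you then hope that $\aexp$ furnishes an explicit inverse via a factorization $\aexp(V)=\exp(V_b)\circ a(V)$. This expectation is not borne out: the aligned exponential is built from the \emph{vertical/horizontal} decomposition of $V$ (the $fA$ part versus the $gB+hC$ part), whereas the splitting you need is \emph{projectable/balanced}, which cuts through the horizontal fields differently. For a general $V$ the horizontal component of $V$ is neither projectable nor balanced, so $\aexp$ does not hand you a balanced factor on the left and an automorphism on the right. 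What $\aexp$ actually gives (and what \autoref{submanifold_thm} uses) is a chart in which $\aut$ appears as the image of $\taut\times\{0\}$; this shows $\bal$ is transversal to $\aut$ at the identity, but transversality alone does not yield a local diffeomorphism of $\mu$ without some form of inverse function theorem, which in the Fr\'echet category you cannot invoke for free.

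The paper sidesteps the Fr\'echet inverse function theorem entirely. Given a coset $f\,\aut$ near $\aut$, it fixes a Hopf fiber $F_0$, looks at the image curve $f(F_0)$, and asks: among nearby Hopf fibers $F$, which one shoots a \emph{balanced} horizontal vector field to hit $f(F_0)$? This is recast as minimizing, over the point $p(F)\in S^2$, the energy $E=\frac{1}{2\pi}\int_F\frac12\|V(t)\|^2\,dt$ of the shooting field. Two lemmas establish $\nabla E=-\overline{V}$ and $\nabla_U\overline{V}\approx -U$ (the latter requiring a holonomy analysis in the Hopf bundle), whence the Hessian of $E$ is positive definite and $E$ is strongly convex. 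Existence and uniqueness of the balanced representative follow, and smooth dependence on $F_0$ comes from the ordinary \emph{finite-dimensional} implicit function theorem applied on $S^2$. Assembling over all $F_0$ produces the balanced diffeomorphism in the coset. The virtue of this approach is that all the hard analysis happens fiberwise in finite dimensions; no Fr\'echet-level inversion is ever attempted.
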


The Fr\'echet bundle structure was already proved in \cite{rubinstein} by a  different argument based on the center-of-mass construction of \cite{Kar}. Instead, here, the existence of a smooth slice consisting of balanced diffeomorphisms is confirmed by a convex optimization
argument, and has the advantage of displaying the Fr\'echet manifold structure of our moduli space $\mathrm{Fib}(S^3)$ as modeled on the ``concrete'' Fr\'echet space of balanced vector fields on $S^3$.

With all this in hand, the proof of \autoref{mainthm} focuses on the Fr\'echet fiber bundle from \autoref{slice_thm}. By a preliminary argument, we determine the homotopy groups of the fiber $\aut$.
And we know from Hatcher's theorem~\cite{hatcher_smale} that $\diffst$ deformation retracts to the
orthogonal group $O(4)$. We then use the rigid symmetries of the Hopf fibration~\cite{gluckwarnerziller} to identify a finite-dimensional sub-bundle of the above bundle with well-known fiber and total space and with base space $S^2 \sqcup S^2$.
Its inclusion into the big bundle above is seen to induce isomorphisms of homotopy groups of
fiber and of total space, so by the Five Lemma, it induces isomorphisms of homotopy groups of
base spaces. Then the weak homotopy equivalence $S^2 \cup S^2 \to \Fib(S^3)$ is promoted to a
homotopy equivalence using the Fr\'echet machinery, proving \autoref{mainthm}.

Then, thanks to \cite{anderson1966} and \cite{henderson_schori_1970}, we can bring our story to a close by promoting \autoref{mainthm} to the following result.

\begin{restatable}{introthm}{bettermainthm}
\label{better_main_thm}
The space $\mathrm{Fib}(S^3)$ is homeomorphic to two disjoint copies of $S^2 \times \mathbb{R}^\infty$.
\end{restatable}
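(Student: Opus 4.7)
The plan is to parlay the homotopy-type computation of \autoref{mainthm} into an honest homeomorphism, using the classical infinite-dimensional topology of Anderson and Henderson-Schori, which classifies separable Fréchet manifolds up to homeomorphism by their homotopy type alone. So the proof will have essentially no original geometric content; all the hard work has already been done in proving \autoref{slice_thm} and \autoref{mainthm}.

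First, I would verify that $\mathrm{Fib}(S^3)$ satisfies the hypotheses of the Henderson-Schori classification. By \autoref{slice_thm}, $\mathrm{Fib}(S^3)$ is a Fréchet manifold locally modeled on the space $\mathrm{BVF}(S^3)$ of balanced vector fields on $S^3$. This is a closed subspace of the separable Fréchet space $\mathrm{VF}(S^3)$ of all smooth vector fields on $S^3$ in the $C^\infty$ topology (separability is standard, for instance via density of polynomial, or of finite spherical-harmonic, vector fields). Hence $\mathrm{BVF}(S^3)$ is itself a separable, metrizable, infinite-dimensional Fréchet space, and by Anderson's theorem \cite{anderson1966} it is homeomorphic to $\mathbb{R}^\infty$. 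Consequently $\mathrm{Fib}(S^3)$ is a separable, metrizable Fréchet manifold modeled on $\mathbb{R}^\infty$.

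Next, I would introduce the candidate model $N := (S^2 \sqcup S^2) \times \mathbb{R}^\infty$. As the product of a closed finite-dimensional smooth manifold with the separable Fréchet space $\mathbb{R}^\infty$, the space $N$ is itself a separable metrizable Fréchet manifold modeled on $\mathbb{R}^\infty$. Since $\mathbb{R}^\infty$ is contractible, $N$ is homotopy equivalent to $S^2 \sqcup S^2$, which by \autoref{mainthm} is the homotopy type of $\mathrm{Fib}(S^3)$. I would then invoke the Henderson-Schori classification theorem \cite{henderson_schori_1970}: two separable Fréchet manifolds modeled on $\mathbb{R}^\infty$ are homeomorphic if and only if they are homotopy equivalent. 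Applied to $\mathrm{Fib}(S^3)$ and $N$, this produces the required homeomorphism
\begin{equation*}
\mathrm{Fib}(S^3) \;\cong\; (S^2 \sqcup S^2) \times \mathbb{R}^\infty \;=\; \bigl(S^2 \times \mathbb{R}^\infty\bigr) \sqcup \bigl(S^2 \times \mathbb{R}^\infty\bigr).
\end{equation*}

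The only obstacle I anticipate is bookkeeping: one must cite precisely the version of the Anderson-Henderson-Schori results that applies to our setting (in particular, confirming separability of the $C^\infty$-topology on $\mathrm{BVF}(S^3)$ and verifying that the classification theorem applies to Fréchet, not merely Hilbert, manifolds). Both are standard, but deserve an explicit sentence each so the reader can locate the exact statements used.
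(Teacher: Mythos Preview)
Your proposal is correct and follows essentially the same route as the paper: invoke Anderson's theorem to identify the model Fr\'echet space with $\mathbb{R}^\infty$, then apply the Henderson--Schori classification of separable metrizable Fr\'echet manifolds to upgrade the homotopy equivalence of \autoref{mainthm} to a homeomorphism with $(S^2 \sqcup S^2)\times\mathbb{R}^\infty$. The paper's version is slightly terser and draws the needed metrizability of $\mathrm{Fib}(S^3)$ from the argument already given in the proof of \autoref{mainthm}, but the substance is identical.
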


\noindent\textbf{Closely related results.}  \autoref{mainthm} fits into a general program of studying the moduli space of smooth fibrations of a given manifold $M$
by copies of a given fiber $F$ and determining not only its number of components, but its entire homotopy type. For most closed aspherical orientable $3$-manifolds $E$, the space of smooth fibrations of $E$ by simple closed curves is known to have contractible components~\cite{rubinstein}.

Some of the authors of this paper have also pursued other related results of this form.
For example, in \cite{wangyang} the authors show that the space of all smooth fibrations of $S^1\times S^2$ by oriented simple closed curves has four components, each having the homotopy type of the based loop space $\Omega S^3$ of the 3-sphere; this is the first known example of a space of smooth fibrations of a 3-manifold which does
not have the homotopy type of a finite-dimensional space.
As another example, in a more systematic spirit, \cite{ziqi} builds on a range of existing work to establish general results and frameworks for studying such infinite-dimensional moduli spaces, from both topological and differential perspectives. In particular, applications include deducing finite-dimensional deformation retracts inside these infinite-dimensional moduli of smooth fibrations for various low-dimensional ambient manifolds (such as closed surfaces and irreducible closed 3-manifolds, recovering the known cases as well as completing the remaining unknown cases for dimensions up to 3). For example, the author shows that the space of all smooth fibrations of a flat two-torus by oriented simple closed curves has contractible components, and each component contains a unique fibering by oriented parallel closed geodesics, to which the component deformation retracts via the curve-shortening flow for families.

To see \autoref{submanifold_thm} in a wider context, we may ask, for other natural geometric structures on the three-sphere, whether the corresponding automorphism subgroups of $\diffst$ also inherit a smooth Fr\'echet submanifold structure.

This is known to hold for the subgroup $\mathrm{Aut}_1(\xi)$ of $\diffst$ consisting of strict contactomorphisms~\cite{omori}. For the full contactomorphism subgroup $\mathrm{Aut}(\xi)$, this appeared in~\cite{Ham}
as Problem 2.5.6, and to our knowledge is still open. By contrast, in the Sobolev category,
$\mathrm{Aut}(\xi)$ is \emph{not} a smooth Fr\'echet submanifold of $\diffst$ (\cite{EP, omori, Smo}).

\noindent\textbf{Conventions.}
Throughout, $\Diff(S^3)$ denotes the topological group of diffeomorphisms of the $\text{3-sphere}$ with
the $C^\infty$ topology. Multiplication and inversion are continuous in this topology, and later, when
we promote $\Diff(S^3)$ to the category of smooth Fr\'echet  manifolds and smooth maps between them, we will note that multiplication and inversion are smooth, and that $\Diff(S^3)$ is a Fr\'echet Lie group.

When we speak of smooth fibrations of a three-sphere by simple closed curves, we mean a fiber bundle in the category of smooth manifolds and smooth maps between them. These are all equivalent in the topological category, and we will indicate later why they are all equivalent in the smooth category.

\noindent\textbf{Acknowledgements.}
We are grateful for the many contributions to this project arising from conversations with Tom Goodwillie, Kiyoshi Igusa, Sander Kupers, Rob Kusner, Cary Malkiewich and Jim Stasheff. Merling acknowledges partial support from NSF DMS grants CAREER 1943925 and FRG 2052988.  Wang acknowledges partial support from NSF GRFP 1650114.

\newpage





\section{Preliminaries}
\label{preliminaries}

In this section, we recall some preliminary results about Fréchet manifolds and their homotopy theory.  In addition, we give explicit descriptions, in terms of basic left-invariant vector fields on $S^3$, of the various Fréchet spaces of interest to us.

\subsection{Fréchet vector spaces and manifolds}

Fréchet spaces, manifolds and Lie groups provide the setting for extending the theory of finite-dimensional smooth manifolds and smooth maps between them to infinite dimensions.  We set vocabulary here and then refer the reader to some of the major sources.

A \emph{Fréchet space $V$} is a complete metrizable vector space whose topology is induced by a countable family of semi-norms, where a semi-norm $r$ is like a norm, except for the axiom that asks for $r(v) = 0$ to imply that $v = 0$.

A collection $\{ \| \cdot \|_\alpha : \alpha \in I \}$ of semi-norms on $V$ defines a unique topology if we declare that a sequence $f_j$ converges to $f$ if and only if $\|f_j - f\|_\alpha \to 0$ for each $\alpha \in I$.

A Fréchet space $V$ is \emph{Hausdorff} if and only if $\| f \|_\alpha = 0$ for all $\alpha$ implies $f = 0$.  It is \emph{metrizable} if and only if the collection $\{ \| \cdot \|_\alpha : \alpha \in I \}$ can be replaced by a countable collection $\{ \| \cdot \|_n : n \in \mathbb{N} \}$, which defines the same topology. In that case, one such metric is
\begin{equation*}
d(x, y) = \sum_{n=1}^\infty 2^{-n} \frac{ \| x - y \|_n }{ 1 + \| x - y \|_n }.
\end{equation*}
It is \emph{complete} if every Cauchy sequence converges, where ``Cauchy'' here means that for each fixed $\alpha \in I$, we have
\begin{equation*}
\| f_i - f_j \|_\alpha \to 0 \quad \text{as} \quad i, j \to \infty.
\end{equation*}

Smooth ($C^\infty$) maps between open subsets of Fréchet spaces are defined, as in the finite-dimensional case, in terms of the convergence of various difference quotients.

\begin{defn}
A \emph{Fréchet manifold} is a Hausdorff topological space with an atlas of coordinate charts taking their values in Fréchet spaces, such that the coordinate transition functions are all smooth maps between open sets in Fréchet spaces.
\end{defn}

We collect the following facts about Fréchet manifolds from the paper~\cite{palais}.

\begin{proposition}[{\cite[Cor. 2]{palais}}]\label{component_metrizable}
If each component of a Fréchet manifold $X$ satisfies the second axiom of countability, then $X$ is metrizable.
\end{proposition}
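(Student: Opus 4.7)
The plan is to reduce to the connected case and then apply a classical metrization theorem. The first step is to observe that a Fr\'echet manifold $X$ is locally modeled on a Fr\'echet space, which by the definition adopted in this section is itself a metrizable topological vector space and in particular is locally path-connected. Consequently $X$ is locally path-connected, so each connected component $C_\alpha$ of $X$ is open (and closed), and $X$ is the topological disjoint union $\bigsqcup_\alpha C_\alpha$.

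Next I would show that each component $C_\alpha$ is metrizable. The component $C_\alpha$ inherits three properties from $X$ and from the hypothesis: it is Hausdorff, it is locally metrizable (since the coordinate charts identify neighborhoods with open subsets of metrizable Fr\'echet spaces), and by assumption it is second countable. A standard argument shows that a locally metrizable Hausdorff space is regular: given a point $x$ and a closed set $F$ not containing $x$, one passes to a metrizable chart around $x$ disjoint from $F$ and uses an open metric ball around $x$ whose closure is still contained in the chart and disjoint from $F$. Hence $C_\alpha$ is regular Hausdorff and second countable, so by Urysohn's metrization theorem it is metrizable; fix a compatible metric $d_\alpha$ on $C_\alpha$ and, by replacing $d_\alpha$ with $\min(d_\alpha, 1)$ if necessary, assume $d_\alpha \leq 1$.

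Finally I would patch these metrics together to metrize all of $X$. Define
\begin{equation*}
d(x, y) =
\begin{cases}
d_\alpha(x, y), & \text{if } x, y \in C_\alpha \text{ for some } \alpha,\\
1, & \text{otherwise.}
\end{cases}
\end{equation*}
Checking that $d$ is a metric is routine, since each $d_\alpha$ is a bounded metric and distinct components are separated by the constant $1$. Because each $C_\alpha$ is clopen in $X$, the topology that $d$ induces on each $C_\alpha$ is the original topology, and the open sets of $X$ are exactly the unions of open subsets of the individual components; hence $d$ induces the topology of $X$, which is therefore metrizable.

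The main obstacle in this approach is the verification of regularity for a locally metrizable Hausdorff space, which is the nontrivial hypothesis needed to invoke Urysohn's theorem; the assembly step and the local-connectedness reduction are otherwise formal, and the only role played by the hypothesis ``each component is second countable'' is to enable the use of Urysohn componentwise without any global countability assumption on the set of components.
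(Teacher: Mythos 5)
The paper offers no proof of this statement: it is quoted directly from Palais with the citation \cite[Cor.~2]{palais}, so there is nothing in-paper to compare against. Your skeleton is the natural one, and two of its three steps are sound: components of a Fr\'echet manifold are indeed clopen (the model spaces are locally path-connected), and gluing bounded metrics on clopen pieces with distance $1$ between distinct pieces does metrize a disjoint union. The problem is the step you yourself identify as the crux.

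The claim that a locally metrizable Hausdorff space is regular is false, so the appeal to Urysohn is not yet justified. A counterexample is Smirnov's deleted sequence topology on $\mathbb{R}$: refine the usual topology so that $A = \{1/n : n \geq 1\}$ becomes closed. This space is Hausdorff, second countable, and locally metrizable (the point $0$ has the open neighborhood $(-1,1) \setminus A$, which carries the usual subspace topology, and every other point has a usual interval neighborhood), yet $0$ cannot be separated from the closed set $A$, so the space is neither regular nor metrizable. Your sketch breaks exactly at the phrase ``an open metric ball around $x$ whose closure is still contained in the chart'': the relevant closure is the closure in $X$, not in the chart, and it can pick up points outside the chart --- in the example above, every neighborhood of $0$ inside $(-1,1)\setminus A$ has $X$-closure containing infinitely many points of $A$. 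In finite dimensions one repairs this with local compactness (a ball with compact closure inside the chart is already closed in $X$), but Fr\'echet manifolds are not locally compact, so that repair is unavailable. Establishing regularity (equivalently, paracompactness of each component, after which Smirnov's metrization theorem finishes the job) is precisely the nontrivial content of Palais's Corollary 2; it cannot be extracted from Hausdorffness plus local metrizability alone and must use the manifold structure in an essential way, so as written the argument has a genuine gap at its central step.
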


A space $X$ is said to be \emph{dominated} by a space $Y$ if there exist maps $f : X \to Y$ and $g : Y \to X$ such that $g \circ f : X \to X$ is homotopic to the identity map of $X$.

\begin{proposition}[{\cite[Thm. 14]{palais}}]
A metrizable Fréchet manifold is dominated by a simplicial complex.
\end{proposition}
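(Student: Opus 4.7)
The plan is to reduce the statement to the classical fact that a metrizable absolute neighborhood retract (ANR) is dominated by a simplicial complex, and prove that any metrizable Fréchet manifold is such an ANR. So the proof splits into two steps: first, show $X$ is an ANR for the class of metric spaces; second, use the nerve of a suitable open cover to exhibit a simplicial domination.

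For the first step, I would work locally and then glue. Around each point, $X$ has a chart homeomorphic to an open subset $U$ of a Fréchet space $V$. A Fréchet space (being a locally convex, completely metrizable topological vector space) is an absolute retract by a Dugundji-style extension theorem: given any closed subset $A$ of a metric space $Y$ and a continuous map $A \to V$, one can extend it over $Y$ using a locally finite partition of unity subordinate to a cover of $Y \setminus A$ by sets whose diameters shrink as they approach $A$, and then taking convex combinations in $V$. An open subset of an AR is an ANR, so each chart domain is an ANR. Then I invoke Hanner's local-to-global theorem: a metrizable space that is locally an ANR is globally an ANR (here \autoref{component_metrizable} is what guarantees $X$ is metrizable when its components are second countable; in the general statement one simply takes $X$ as given to be metrizable). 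This yields that $X$ is a metric ANR.

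For the second step, I would realize $X$ as a closed subset of a convex subset $C$ of a normed vector space (for instance via the Arens--Eells embedding, or by isometrically embedding the metric space $X$ into a Banach space). Since $X$ is an ANR, there is an open neighborhood $U \subseteq C$ of $X$ and a retraction $r \colon U \to X$. Cover $U$ by a locally finite open refinement $\{U_\alpha\}$ consisting of convex sets small enough that $r$ varies by only a controllable amount on each $U_\alpha$. Let $K$ be the nerve of this cover, a simplicial complex. Using a partition of unity $\{\varphi_\alpha\}$ subordinate to $\{U_\alpha\}$, define $f \colon X \to K$ by sending $x$ to the point with barycentric coordinates $(\varphi_\alpha(x))$. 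Choose a point $x_\alpha \in U_\alpha$ for each $\alpha$ and define $g \colon K \to U$ by sending the vertex corresponding to $\alpha$ to $x_\alpha$ and extending affinely on each simplex (which lands in the convex hull, hence in $U$ if the cover was chosen fine enough). Then compose with $r$ to get $r \circ g \colon K \to X$. A straight-line homotopy in $C$ shows that $g \circ f$ and the inclusion $X \hookrightarrow U$ are homotopic in $U$, so applying $r$ gives $r \circ g \circ f \simeq \mathrm{id}_X$, which is the required domination.

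The main obstacle is the first step, specifically the Dugundji-type extension theorem in the Fréchet (rather than Banach) setting, which requires one to be careful that convex combinations converge in $V$ and that the extension is continuous near $A$; the argument relies on the local convexity and metrizability of Fréchet spaces, and this is where the hypotheses of the statement are really used. Once ANR-hood is in hand, the nerve construction is standard and purely combinatorial, requiring only paracompactness of the metric space $X$ to choose locally finite covers and partitions of unity.
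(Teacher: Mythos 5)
The paper offers no proof of this statement---it is quoted verbatim from Palais (Thm.~14 of \emph{Homotopy theory of infinite dimensional manifolds})---and your argument is essentially the one Palais himself gives: Dugundji's extension theorem for locally convex targets plus Hanner's local-to-global theorem to show a metrizable manifold modeled on Fréchet spaces is a metric ANR, followed by the classical Kuratowski--Wojdys{\l}awski embedding and nerve-of-a-fine-cover construction to dominate a metric ANR by a simplicial complex. The proposal is correct and matches the standard route.
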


\begin{proposition}[{\cite[Thm. 15]{palais}}]\label{homotopy_upgrade}
Let $X$ and $Y$ be metrizable Fréchet manifolds.  Then any weak homotopy equivalence $f : X \to Y$ is in fact a homotopy equivalence.
\end{proposition}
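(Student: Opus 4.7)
The plan is to combine the preceding proposition with two classical ingredients from homotopy theory: Milnor's theorem and Whitehead's theorem. By the proposition just above, each of $X$ and $Y$ is dominated by a simplicial complex. Milnor's theorem from his 1959 paper \emph{On spaces having the homotopy type of a CW complex} upgrades this to genuine CW-homotopy type: any space dominated by a CW complex itself has the homotopy type of a CW complex. Therefore there exist CW complexes $K_X$, $K_Y$ together with homotopy equivalences $\varphi \colon X \to K_X$ and $\psi \colon Y \to K_Y$, with chosen homotopy inverses $\bar\varphi$ and $\bar\psi$.

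Next I would transport $f$ across these equivalences, forming $\tilde f := \psi \circ f \circ \bar\varphi \colon K_X \to K_Y$. Because composition with (ordinary) homotopy equivalences preserves the property of being a weak homotopy equivalence, $\tilde f$ remains a weak homotopy equivalence, now between CW complexes. Whitehead's theorem therefore produces a genuine homotopy inverse $g \colon K_Y \to K_X$ for $\tilde f$. A short diagram chase (premultiplying $\tilde f \circ g \simeq \mathrm{id}_{K_Y}$ by $\bar\psi$ and postmultiplying by $\psi$, and symmetrically on the other side) shows that $\bar\varphi \circ g \circ \psi \colon Y \to X$ is a homotopy inverse for $f$ itself, and hence that $f$ is a homotopy equivalence.

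The one nontrivial step is Milnor's, since being dominated by a CW complex does not by itself give the space a CW structure; the upgrade to CW-homotopy type requires a mapping telescope construction applied to the domination data. Once that is granted, everything else is routine bookkeeping around Whitehead's theorem, and no further feature of the Fréchet setting enters beyond what the preceding proposition already packages for us. This is attractive for the paper's applications: weak equivalences between moduli spaces, produced by five-lemma arguments on homotopy groups, can be freely promoted to genuine homotopy equivalences.
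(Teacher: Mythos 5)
Your argument is correct and is essentially the standard one: the paper itself gives no proof, simply citing Palais's Theorem~15, and Palais's own derivation is exactly the route you take (domination by a complex from the preceding proposition, upgrade to CW homotopy type, then the Whitehead theorem). The only quibble is attribution—the fact that a space dominated by a CW complex has the homotopy type of one goes back to J.~H.~C.~Whitehead's \emph{Combinatorial Homotopy~I}, with Milnor's 1959 paper giving the now-standard treatment—but the mathematics is sound.
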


We will use this in the last step of the proof of our  \autoref{mainthm} to promote weak homotopy equivalence to homotopy equivalence.

A \emph{Fréchet Lie group} is a Fréchet manifold $G$ with a group structure in which the multiplication map $G \times G \to G$ and the inversion map $G \to G$ are both smooth maps of Fréchet manifolds. For example, if $M$ is a finite-dimensional smooth manifold, then multiplication (i.e., composition) and inversion in $\mathrm{Diff}(M)$ are smooth maps, making it into a Fréchet Lie group.

The example of most importance to us is the space $\diffst$ of smooth diffeomorphisms of the 3-sphere to itself, which is modelled on the Fréchet vector space $\mathrm{VF}(S^3)$ of smooth vector fields on $S^3$. The coordinate charts are provided by the \emph{Riemannian exponential map}, which converts each short vector field $V$ into a diffeomorphism sending points $x$ of $S^3$ to the endpoints of the geodesics directed by $V(x)$. The following result is of importance to us.

\begin{theorem}[{\cite{KM}}]\label{separable_metrizable}
For any closed smooth finite-dimensional manifold $M$, the Fréchet Lie group $\mathrm{Diff}(M)$ of smooth diffeomorphisms is separable and metrizable.
\end{theorem}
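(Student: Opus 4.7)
The plan is to establish both conclusions by showing $\mathrm{Diff}(M)$ is second countable; with the Hausdorff property built into the Fr\'echet manifold definition, second countability immediately yields separability, and \autoref{component_metrizable} then upgrades it to metrizability. (Note that separability is in fact forced to go hand in hand with countability of components, since a separable metrizable locally connected space must be Lindel\"of and therefore have countably many open components.)

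First I would verify that the model space $\mathrm{VF}(M)$ is itself a separable Fr\'echet space. Since $M$ is closed, fix a finite smooth atlas $\{(U_i, \varphi_i)\}$ with $\overline{U_i}$ compact; the $C^\infty$ topology on $\mathrm{VF}(M)$ is then generated by the countable family of $C^k$ semi-norms of coordinate representations of vector fields on these charts, yielding metrizability. Separability follows by a Weierstrass-type argument: polynomial approximations with rational coefficients are dense in each chart in every $C^k$ norm, and a smooth partition of unity subordinate to the atlas glues countably many such local approximations into a countable dense subset of $\mathrm{VF}(M)$.

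Next I would transfer separability to $\mathrm{Diff}(M)$. The Riemannian exponential map chart at the identity (introduced in the excerpt) identifies a neighborhood of $\mathrm{id}$ with an open subset of $\mathrm{VF}(M)$, and left-translation by composition with any fixed $g \in \mathrm{Diff}(M)$ produces an analogous chart around $g$. Thus every point has a second-countable chart neighborhood. A standard chain-of-charts argument---iteratively enlarging a basepoint to a union of countably many such chart neighborhoods---shows each connected component of $\mathrm{Diff}(M)$ is itself second countable.

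The main obstacle is confirming that $\pi_0(\mathrm{Diff}(M))$ is countable. The cleanest route is through the continuous inclusion $\mathrm{Diff}(M) \hookrightarrow C^0(M, M)$ and the fact that $\pi_0(C^0(M, M)) = [M, M]$ is countable because $M$ has the homotopy type of a finite CW complex. The subtle step, which is the technical heart of the proof, is bounding the number of smooth isotopy classes inside a single continuous homotopy class; this rests on concordance-implies-isotopy together with smoothing-theory inputs, and is where the ``closed smooth finite-dimensional'' hypothesis does real work. Once countability of $\pi_0(\mathrm{Diff}(M))$ is combined with second countability of each component, global second countability follows, and with it both separability and, via \autoref{component_metrizable}, metrizability.
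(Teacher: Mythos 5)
The paper does not prove this statement; it is quoted verbatim from Kriegl--Michor, so there is no in-paper argument to match yours against. Judged on its own, your plan's skeleton is sound (second countability gives separability, and \autoref{component_metrizable} gives metrizability), and your first two paragraphs --- separability of the model space $\mathrm{VF}(M)$ and second countability of each component via exponential charts and translation --- are fine. The genuine gap is in the final step. You propose to prove countability of $\pi_0(\mathrm{Diff}(M))$ by bounding the number of isotopy classes within a homotopy class using ``concordance-implies-isotopy together with smoothing-theory inputs.'' Cerf's concordance-implies-isotopy theorem requires $\dim M \ge 5$ and simple connectivity, fails for general closed manifolds, and smoothing theory is unavailable in dimension $4$; so as stated this step does not go through in the generality of the theorem (which, in this paper, is applied precisely to a $3$-manifold). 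You have turned the easy part of the proof into a deep open-ended problem in manifold topology.

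The fix is elementary and makes the $\pi_0$ discussion unnecessary. Embed $M$ in some $\mathbb{R}^N$; then $\mathrm{Diff}(M)$ is a subspace of $C^\infty(M,\mathbb{R}^N)$ in the $C^\infty$ topology. Since $M$ is compact, $C^\infty(M,\mathbb{R}^N)$ embeds as a (closed) subspace of the countable product $\prod_k C^k(M,\mathbb{R}^N)$ of separable Banach spaces, hence is separable and metrizable, hence second countable. Second countability is hereditary, so $\mathrm{Diff}(M)$ is second countable, therefore separable and (being Lindel\"of with open components) has countably many components; metrizability then follows from \autoref{component_metrizable}, or directly from the metric on the ambient space. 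In other words, countability of $\pi_0(\mathrm{Diff}(M))$ is a \emph{consequence} of this soft argument, not an input requiring concordance or smoothing theory.
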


For a detailed discussion of Fréchet vector spaces and manifolds, we refer the reader to the paper~\cite{Ham} and the book~\cite{KM}. To get a sense of the early development of this subject, see~\cite{Eells58, Eells, palais, leslie}. We have also offered an overview in~\cite[Appendix~A]{deturck2024deformation}.

\subsection{Left-invariant vector fields on \texorpdfstring{$S^3$}{S3}}

Vector fields on the 3-sphere $S^3$ become more concrete when we view $S^3$ as the space of unit quaternions, and use the three orthonormal left-invariant vector fields 
\begin{equation*}
A(x) = x i , \quad B(x) = x j , \quad C(x) = x k ,
\end{equation*}
written in terms of quaternion multiplication, as a basis for $\mathrm{VF}(S^3)$ as a module over the ring of smooth real-valued functions on $S^3$.

\begin{figure}[H]
    \centering
    \includegraphics[scale=0.23, angle=270]{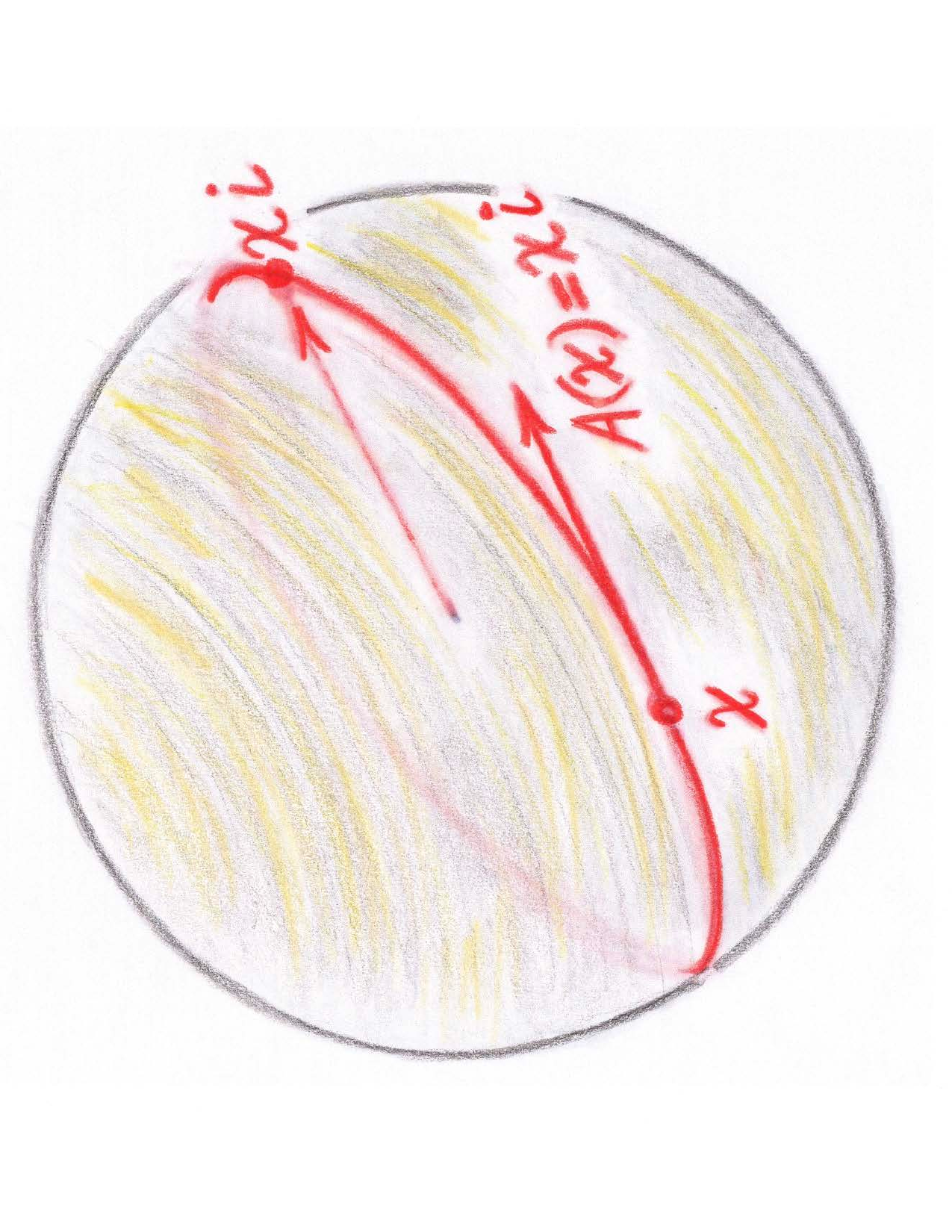}
    \caption{\centering The quaternion product  $A(x) = xi$  may be visualized as a point on $S^3$,
    or as a unit vector in $\mathbb{R}^4$ from the origin to this point, \\ or as a unit vector tangent at $x$ to the great circle from $x$ to $xi$.}
    \label{fig:fig1}
\end{figure}

Any smooth vector field $X$ on $S^3$ can be expressed as $X = f\,A + g\,B + h\,C$, where the coefficients are smooth real-valued functions on $S^3$.  We will need to know the Lie brackets of these basis vector fields, which are easily computed to be
\begin{equation}\label{lie_bracket_abc}
[A, B] = 2 C , \quad [B, C] = 2 A , \quad [C, A] = 2 B .
\end{equation}

We next choose our Hopf fibration $\HH$ so that $A$ is the unit vector field tangent to the Hopf fibers, which are then the great circles in $S^3$ taken to themselves via right multiplication by $i$, and oriented in this direction from $x$ to $x i$.  We write this Hopf fibration as 
\begin{equation*}
\HH \; : \; S^1 \;\subset\; S^3 \;\overset{p}{\to}\; S^2.
\end{equation*}

The Fréchet vector space $\mathrm{VF}(S^3)$ is the tangent space at the identity to the Fréchet Lie group $\diffst$.  We will use the $L^2$ inner product on $\mathrm{VF}(S^3)$, which is given by the integral of the pointwise inner product on $S^3$:
\begin{equation}\label{l2_metric_def}
\langle X, Y \rangle = \int_{S^3} X(x) \cdot Y(x) \; \mathrm{d(Vol)}.
\end{equation}

\begin{rem}
The metric topology coming from this inner product is not the same as the smooth topology on $\mathrm{VF}(S^3)$; it does not have nearly enough open sets.  And unlike the smooth topology, this metric topology is not complete because differentiability can be lost in the limit.
\end{rem}

We now introduce the main player, the automorphism group of the Hopf fibration.

\begin{defn}
Let $\auth$ denote the subgroup of $\diffst$ consisting of diffeomorphisms which permute the fibers of $\HH$, with stretching and compressing allowed.
\end{defn}

We will see in the proof of Theorem B that $\auth$ is a smooth Fréchet submanifold of $\diffst$, and give in advance a characterization of its tangent space at the identity.

\begin{proposition}\label{prop_taut_description}
The smooth vector field $X = f A + g B + h C$ on $S^3$ lies in the tangent space $T_{\mathrm{id}}\auth$ if and only if
\begin{enumerate}
    \item $f = $ any smooth real-valued function on $S^3$; \smallskip
    \item $g = - \frac{1}{2} A h$, indicating a directional derivative, here and below; \smallskip
    \item $h = \frac{1}{2} A g$.
\end{enumerate}
\end{proposition}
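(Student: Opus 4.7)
The plan is to identify $\taut$ with the space of \emph{projectable} vector fields---vector fields whose flows preserve the Hopf foliation---and then extract the three stated coordinate conditions by a direct bracket calculation using \eqref{lie_bracket_abc}.

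For the first step, I would argue as follows. If $\phi_t$ is a smooth one-parameter family in $\aut$ through the identity with derivative $X$ at $t=0$, then each $\phi_t$ sends Hopf fibers to Hopf fibers, so $(\phi_t)_* A$ is pointwise a scalar multiple of $A$ for every $t$; differentiating at $t=0$ forces $[X, A]$ to be pointwise parallel to $A$. Conversely, if $[X, A]$ is everywhere parallel to $A$, then $X$ preserves the one-dimensional vertical distribution spanned by $A$, whose integral leaves are (trivially) the Hopf fibers, so the flow of $X$ carries fibers to fibers for small time and therefore lies in $\aut$. Hence $\taut$ is exactly the subspace of $X \in \mathrm{VF}(S^3)$ with $[X, A]$ vertical at every point.

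For the second step, I would expand $X = fA + gB + hC$ and apply the Leibniz rule $[\varphi Y, Z] = \varphi [Y, Z] - (Z\varphi) Y$ together with \eqref{lie_bracket_abc} to obtain
\begin{equation*}
[X, A] \;=\; -(Af)\, A \;+\; (2h - Ag)\, B \;+\; (-2g - Ah)\, C.
\end{equation*}
Requiring the $B$- and $C$-components to vanish yields $h = \tfrac{1}{2} A g$ and $g = -\tfrac{1}{2} A h$, which are exactly conditions (3) and (2), while the $A$-component places no constraint on $f$, giving condition (1).

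The one conceptual subtlety is the identification of $\taut$ with projectable vector fields, since the Fr\'echet submanifold structure of $\aut$ itself is only established later in Theorem B. However, the characterization above is formulated entirely in terms of smooth paths of diffeomorphisms and their derivatives, so it supplies a well-defined description of the tangent space independently of that structure---and it is precisely the candidate that Theorem B will confirm. The remaining bracket computation is routine.
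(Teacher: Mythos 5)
Your proof is correct and follows essentially the same route as the paper: characterize $\taut$ as the vector fields $X$ with $[X,A]$ everywhere parallel to $A$, then expand $X=fA+gB+hC$ and use the bracket relations \eqref{lie_bracket_abc} to read off the three conditions, with an identical final bracket formula. The only difference is that you also spell out the converse direction (that $[X,A]\parallel A$ implies the flow of $X$ preserves the fibers), which the paper leaves implicit; this is a welcome but minor addition.
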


\begin{proof}
To begin, we assert that
\begin{equation*}
T_{\mathrm{id}}\auth = \{ X \in \mathrm{VF}(S^3) : \mathcal{L}_X A = \lambda A \text{ for smooth } \lambda : S^3 \to \mathbb{R} \} ,
\end{equation*}
where $\mathcal{L}_X$ is the Lie derivative with respect to $X$.

To see this, let $\{\varphi_t\}$ be a curve of diffeomorphisms of $S^3$ through the identity when $t = 0$, all lying in the subgroup $\auth$ of $\diffst$, and let $X \in \mathrm{VF}(S^3)$ be tangent to this curve at the identity. Then by definition,
\begin{equation*}
(\mathcal{L}_X A)(x) = \left. \frac{d}{dt} \right|_{t=0} \varphi_t^* A(\varphi_t(x)) .
\end{equation*}
In order for $\varphi_t \in \auth$, the pullback $\varphi_t^* A(\varphi_t(x))$ must be some variable multiple of $A(x)$, depending on both $x$ and $t$, and therefore its time-zero derivative, $(\mathcal{L}_X A)(x)$ must be a variable multiple of $A(x)$, depending just on $x$. That is, $(\mathcal{L}_X A)(x) = \lambda(x) A(x)$.

Now write a vector field $X$ on $S^3$ as $X = f A$ $+$ $g B$ $+$ $h C$ and compute $\mathcal{L}_X A$ to see what constraints the condition $\mathcal{L}_X A = \lambda A$ imposes on the coefficients $f, g$ and $h$.

Notationally, we switch from Lie derivatives to Lie brackets and compute
\begin{align*}
\mathcal{L}_X A &= [X, A] = [fA + gB + hC, A] \\
&= [fA, A] + [gB, A] + [hC, A] \\
&= -[A, fA] - [A, gB] - [A, hC] \\
&= - (A f) A - f[A, A] - (A g) B - g[A, B] - (A h) C - h[A,C] \\
&= - (A f) A - (A g) B - g (2 C) - (A h) C - h ( - 2 B ) \\
&= - (A f) A + (2 h - A g) B - (2 g + A h) C .
\end{align*}
Therefore $X \in T_{\mathrm{id}}\auth$ if and only if $-A f = \lambda$ for some smooth real-valued function $\lambda$, which is no constraint at all, and $2 h - A g = 0$ and $2 g + A h = 0$. This completes the proof of the proposition.
\end{proof}

\begin{rems} We collect the following observations.
\begin{itemize}
    \item The vector fields in $\taut$ are said to be \emph{projectable} or \emph{aligned} with the Hopf projection $p \colon S^3 \to S^2$ in the sense that they project down to well-defined vector fields on the base space $S^2$. The space of such fields is also denoted $\mathrm{PVF}(S^3)$.
    \item We also note that the above conditions persist under limits in the smooth topology on $\mathrm{VF}(S^3)$,
    confirming that $\taut$ is a closed subspace.
    \item The tangent space $\taut$ is a module over the ring of smooth real-valued functions on
    the base space $S^2$ of the Hopf fibration $\HH$, equivalently over the ring of smooth real-valued
    functions on the total space $S^3$ which are constant along each Hopf fiber.
    \item We can decouple equations $(2)$ and $(3)$ in  \autoref{prop_taut_description} by differentiating again with respect 
    to $A$ and, paying attention to constants, learn that

    \begin{gather}\label{taut_description_2}
    \begin{split}
        \text{(i)}\quad & g(t) = c_1 \cos 2t + c_2 \sin 2t \\
        \text{(ii)}\quad & h(t) = c_2 \cos 2t - c_1 \sin 2t ,
    \end{split}
    \end{gather}
where the Hopf circles are parametrized by $0 \leq t \leq 2\pi$, and where the coefficients $c_1$ and $c_2$ are constant along each Hopf circle, but otherwise vary smoothly. The system of equations in \eqref{taut_description_2} persists (with a change of constants) if we replace $t$ by $t + a$, since there is no distinguished point on a Hopf circle at which to place the coordinate $t = 0$.
\end{itemize}

\end{rems}

\begin{figure}[H]
    \centering
    \includegraphics[scale=0.25]{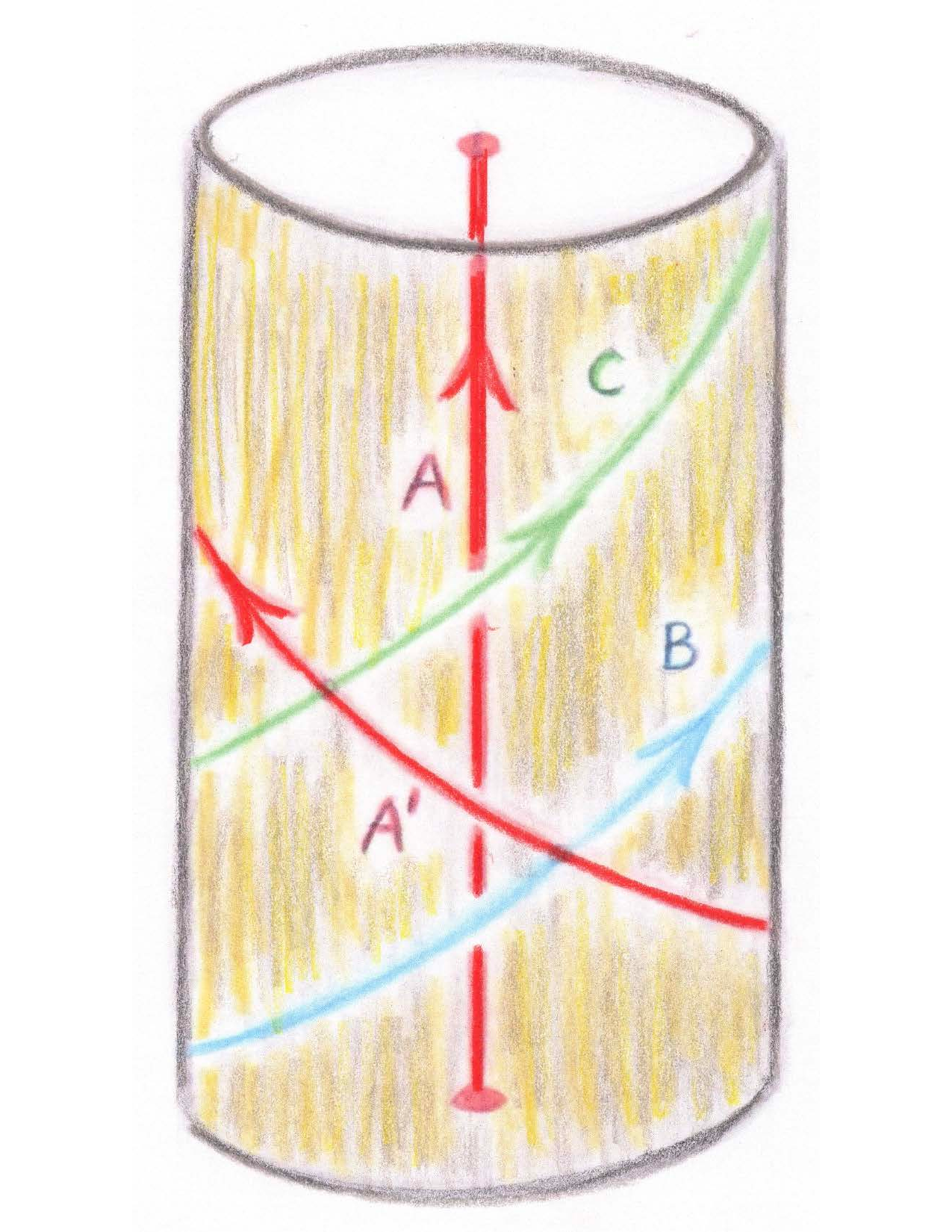}
    \caption{\centering A tubular neighborhood of a Hopf fiber. The curves $A$ and $A'$ are great circle fibers of $\HH$, while the curves $B$ and $C$ are great circles representing  exponentiated Jacobi fields along $A$.}
    \label{fig:fig2}
\end{figure}

In  \autoref{fig:fig2}, we show a thin tubular neighborhood of one of the fibers of our Hopf fibration $\HH$, with the neighborhood cut open and straightened for easier viewing. The orange-colored central fiber is labeled $A$ because it is an integral curve (great circle) of the left-invariant vector field $A$ on $S^3$. It is oriented in the direction of right multiplication by the quaternion $i$. On the boundary of this tubular neighborhood we show in orange another oriented fiber of $\HH$, labeled $A'$.

We sort out left- and right-handed screw sense as follows. Let us orient $\mathbb{R}^4$ via the ordered basis $1, i, j, k$, and then orient $S^3$ so that at each point $x \in S^3$, the unit vector $x$ followed by the orientation of the tangent plane $T_x S^3$ agrees with the orientation of $\mathbb{R}^4$. Thus at the point $1 \in S^3$, the vectors $i, j, k$ give an ordered basis for $T_1 S^3$, while at the point $x \in S^3$, the vectors $A(x) = x i$, $B(x) = x j$, $C(x) = x k$ give an ordered basis for $T_xS^3$.

Now the unit circle in the $(1, i)$-plane is a Hopf fiber oriented from $1$ towards $i$, while the unit circle in the $(j, k)$-plane is a Hopf fiber oriented from $j$ to $j i = -k$. Thus the oriented planes spanned by these two fibers have intersection number $-1$, and hence these two oriented fibers of $\HH$ have linking number $-1$. By convention, we say that this Hopf fibration $\HH$ has a \emph{left-handed screw sense}, as shown in the figure.

We also indicate in blue how the vector field $B(x)$ along the central orange fiber of $\HH$ turns as the point $x$ moves along this fiber in the direction of its orientation. The blue curve can be thought of as the exponential image of the vector field $\epsilon B(x)$ along this central fiber. It is drawn to depict a right-handed screw sense because $L_A B = 2 C$ and $L_A C = -2 B$. And likewise for the vector field $C(x)$, whose exponential image along the central fiber of $\HH$ is shown in green.

A straightforward computation shows that the blue and green curves are fibers of a Hopf fibration $\HH'$ whose orbits are the great circles in $S^3$ taken to themselves by left multiplication by $i$, a kind of sinister first cousin to $\HH$, and so $\HH'$ has a right-handed screw sense. It follows that the vector fields $B(x)$ and $C(x)$ are Jacobi fields along the central orange fiber of $\HH$.

Next, we introduce the subgroup of $\auth$ consisting of those automorphisms that take each Hopf fiber to itself.

\begin{defn}
Let $\vaut \subset \auth$ be the subgroup of automorphisms of $\HH$ which take each oriented fiber to itself, but still with stretching and compressing allowed.
\end{defn}

Note that the following condition, which ensures that a smooth vector field lies in the tangent space to $\vaut$ at the identity, is immediate from the definition.

\begin{proposition}
The smooth vector field $X = fA + gB + hC$ lies in $T_{\mathrm{id}}\vaut$ if and only if
\begin{itemize}
    \item[(1)] $f =$ any smooth real-valued function on $S^3$; \smallskip 
    \item[(2)] $g = 0$; \smallskip 
    \item[(3)] $h = 0$.
\end{itemize}
\end{proposition}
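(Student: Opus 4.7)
The strategy is a direct two-direction argument, simpler than the one for \autoref{prop_taut_description} because the fiber-preservation condition defining $\vaut$ is pointwise: each fiber is sent to itself setwise rather than merely permuted among the other fibers. Since the oriented Hopf fibers are exactly the (parametrized) integral curves of the left-invariant vector field $A$, the claim reduces to showing that $X \in T_{\mathrm{id}}\vaut$ if and only if $X(x)$ is parallel to $A(x)$ at every $x \in S^3$.

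For the forward direction, I would take a smooth curve $\{\varphi_t\}$ in $\vaut$ with $\varphi_0 = \mathrm{id}$ and velocity $X$ at $t=0$. By definition of $\vaut$, for each fixed $x \in S^3$ the entire curve $t \mapsto \varphi_t(x)$ is contained in the single Hopf fiber through $x$, so its velocity at $t=0$, namely $X(x)$, is tangent to that fiber at $x$. Because $A$ spans the tangent line to the fiber at each point, this forces the coefficients of $B$ and $C$ to vanish, that is $g(x) = h(x) = 0$ for all $x$.

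For the reverse direction, suppose $X = fA$ with $f$ an arbitrary smooth real-valued function on $S^3$. Since $X$ is tangent to the Hopf foliation at every point and $S^3$ is compact, the flow $\{\varphi_t\}$ of $X$ exists globally and each $\varphi_t$ sends every Hopf fiber into itself. Orientations of the fibers are preserved because $\varphi_0 = \mathrm{id}$ preserves them and $t \mapsto \varphi_t$ is continuous. Thus $\{\varphi_t\}$ is a smooth path in $\vaut$ through the identity with initial velocity $X$, proving $X \in T_{\mathrm{id}}\vaut$.

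The only subtlety worth flagging is that at this point in the paper we have not yet established a Fr\'echet submanifold structure on $\vaut$, so ``tangent space at the identity'' should be interpreted as the set of $X \in \mathrm{VF}(S^3) = T_{\mathrm{id}}\diffst$ arising as velocities of smooth one-parameter curves in $\vaut$ through the identity. No deeper obstacle is anticipated; this is precisely why the author calls the condition ``immediate from the definition.''
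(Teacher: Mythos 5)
Your argument is correct and is exactly the routine verification the paper has in mind when it declares the proposition ``immediate from the definition'' (the paper supplies no written proof): curves in $\vaut$ stay inside individual fibers, so their velocities are multiples of $A$, and conversely the flow of $fA$ preserves each oriented fiber. Your closing remark about interpreting the tangent space via velocities of curves is a sensible clarification but changes nothing of substance.
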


In a similar way, we give a characterization of smooth vector fields which live in the orthogonal complement of the tangent space to $\auth$ at the identity.

\begin{proposition}\label{taut_perp_description}
The $L^2$ orthogonal complement $\taut^{\perp}$ consists of vector fields of the form ${X = fA + gB + hC}$ such that \smallskip 
\begin{itemize}
    \item[(1)] $f = 0$, and along each Hopf circle, we have that  
    \item[(2)] $\int\limits_0^{2\pi} g \cos 2t - h \sin 2t\, dt = 0$, 
    \item[(3)] $\int\limits_0^{2\pi} g \sin 2t + h \cos 2t\, dt = 0$.
\end{itemize}
\end{proposition}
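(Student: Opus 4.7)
The plan is to take a candidate vector field $Y = fA + gB + hC$ and characterize what it means for $Y$ to be $L^2$-orthogonal to every $X \in \taut$, using the Hopf bundle structure to split $\int_{S^3}$ into an iterated integral along fibers and over the base $S^2$.

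First I would pair $Y$ against the purely-vertical fields $X = \phi A$ for arbitrary $\phi \in C^\infty(S^3)$; each such $X$ lies in $\taut$ by \autoref{prop_taut_description}. Orthogonality gives $\int_{S^3}\phi f\, d\mathrm{Vol} = 0$ for all smooth $\phi$, which forces $f = 0$; this is condition~(1).

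Next, having $f = 0$, I would use the closed-form description in \eqref{taut_description_2}: every $X \in \taut$ with vanishing $A$-component has the form $X = \psi B + \theta C$ with $\psi = c_1 \cos 2t + c_2 \sin 2t$ and $\theta = c_2 \cos 2t - c_1 \sin 2t$ along each Hopf circle parametrized by arc length $t \in [0, 2\pi]$, where $c_1, c_2$ are functions constant along fibers, i.e., pulled back from smooth functions on $S^2$. Since $A$ is a unit vector field tangent to the fibers, the Hopf projection is a Riemannian submersion and the volume form decomposes as $d\mathrm{Vol}_{S^3} = dt \wedge p^*\!d\omega$ for a suitable area form $d\omega$ on $S^2$. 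Applying Fubini and regrouping, the integrand along each fiber becomes
\begin{equation*}
\psi g + \theta h \;=\; c_1\bigl(g\cos 2t - h\sin 2t\bigr) + c_2\bigl(g\sin 2t + h\cos 2t\bigr),
\end{equation*}
so the total pairing is $\int_{S^2}(c_1 I_1 + c_2 I_2)\, d\omega$, where $I_1(s)$ and $I_2(s)$ are the fiber integrals appearing in conditions~(2) and~(3), viewed as functions on $S^2$.

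Finally, as $c_1, c_2$ vary independently over all smooth functions on $S^2$, vanishing of this pairing forces $I_1 \equiv I_2 \equiv 0$, which are exactly conditions~(2) and~(3); the converse is immediate by reading the same computation backwards. The only delicate point is confirming that $c_1, c_2$ may indeed be chosen as arbitrary smooth functions on $S^2$ (so that the standard fundamental lemma on the base applies), and that the volume form really does factor as claimed; both follow at once from the Hopf map being a smooth Riemannian submersion with fibers of length $2\pi$.
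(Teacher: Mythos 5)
Your proof is correct and follows essentially the same route as the paper: both reduce orthogonality to pairing against the vertical fields $\phi A$ and against the horizontal fields built from the decoupled equations \eqref{taut_description_2}, using the $C^\infty(S^2)$-module structure to convert $L^2$-orthogonality on $S^3$ into a condition along each Hopf fiber. If anything, you are more explicit than the paper about the one genuinely delicate step, namely the Fubini decomposition and the fundamental-lemma argument that upgrades the vanishing of $\int_{S^2}(c_1 I_1 + c_2 I_2)\,d\omega$ for all smooth $c_1, c_2$ to the pointwise vanishing $I_1 \equiv I_2 \equiv 0$, which the paper compresses into ``putting $c_1 = 1$ and $c_2 = 0$.''
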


\begin{proof}
For $X = fA + gB + hC$ to be $L^2$ orthogonal to $T_{\mathrm{id}}\auth$, we certainly must have $f = 0$. Then, referring to the decoupled equations \eqref{taut_description_2}, we must also have along each Hopf fiber that
\begin{equation}
\int\limits_0^{2\pi} g (c_1 \cos 2t + c_2 \sin 2t) + h (c_2 \cos 2t - c_1 \sin 2t)\, dt = 0,
\end{equation}
where $c_1$ and $c_2$ are smooth functions on $S^3$ which are constant on Hopf fibers. Putting $c_1 = 1$ and $c_2 = 0$ yields condition $(2)$ above, while putting $c_1 = 0$ and $c_2 = 1$ gives condition $(3)$.

Alternatively expressed, the vector field $X$ is $L^2$-orthogonal to $T_{\mathrm{id}}\auth$ if and only if it is horizontal and is $L^2$-orthogonal along each Hopf circle to the two-dimensional linear space of vector fields there discussed above.
\end{proof}

Just as we noted earlier for $T_{\mathrm{id}}\auth$, its $L^2$ orthogonal complement $T_{\mathrm{id}}\auth^\perp$ is a module over the ring of smooth functions on $S^2$, equivalently over the ring of smooth real-valued 
functions on the total space $S^3$ which are constant along each Hopf fiber, and this lets us 
convert $L^2$ orthogonality on $S^3$ to $L^2$ orthogonality along each great circle fiber of $\HH$.

\begin{proposition}\label{prop_l2_direct_sum}
We have the $L^2$ orthogonal direct sum
\begin{equation}
T_{\mathrm{id}}\auth \oplus T_{\mathrm{id}}\auth^\perp = T_{\mathrm{id}}\diffst = \mathrm{VF}(S^3).
\end{equation}
\end{proposition}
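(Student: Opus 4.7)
The plan is to exhibit an explicit decomposition $X = X_\parallel + X_\perp$ for any smooth $X \in \mathrm{VF}(S^3)$, with $X_\parallel \in \taut$ and $X_\perp \in \taut^\perp$. Uniqueness is automatic: any element of $\taut \cap \taut^\perp$ is $L^2$-orthogonal to itself and hence zero.

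For existence, I would write $X = fA + gB + hC$ and work fiberwise. Along each oriented Hopf fiber, parametrized by $t \in [0, 2\pi]$, the characterization \eqref{taut_description_2} identifies the ``projectable'' locus of the pair $(g, h)$ as the two-dimensional subspace spanned by the waveforms $(\cos 2t, -\sin 2t)$ and $(\sin 2t, \cos 2t)$, which are $L^2$-orthogonal along the fiber with common squared norm $2\pi$. Projecting $(g,h)$ onto this subspace yields coefficients
\begin{equation*}
c_1 = \frac{1}{2\pi}\int_0^{2\pi} (g\cos 2t - h\sin 2t)\,dt, \qquad c_2 = \frac{1}{2\pi}\int_0^{2\pi} (g\sin 2t + h\cos 2t)\,dt,
\end{equation*}
and I would then set
\begin{equation*}
X_\parallel := fA + (c_1\cos 2t + c_2\sin 2t)\,B + (c_2\cos 2t - c_1\sin 2t)\,C, \qquad X_\perp := X - X_\parallel.
\end{equation*}
By construction, $X_\parallel$ satisfies conditions (1)--(3) of \autoref{prop_taut_description}, so $X_\parallel \in \taut$. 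Since the subtraction is exactly the fiberwise projection's residue, $X_\perp$ has vanishing $A$-component and its $(g,h)$-part is $L^2$-orthogonal along every Hopf fiber to both spanning waveforms, which is exactly conditions (1)--(3) of \autoref{taut_perp_description}; thus $X_\perp \in \taut^\perp$.

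The main obstacle I anticipate is verifying that $c_1, c_2$ descend to \emph{smooth} functions on the base $S^2$, so that $X_\parallel$ and $X_\perp$ are genuinely smooth vector fields on $S^3$. This follows from the standard fact that fiber-integration over a smooth compact fiber bundle carries smooth integrands to smooth base-level functions (differentiation under the integral is justified on any local trivialization of $p\colon S^3 \to S^2$). A small subtlety is that the parameter $t$ has no canonical origin on a fiber; under $t \mapsto t+a$, the pair $(c_1, c_2)$ rotates, but the combinations $c_1\cos 2t + c_2\sin 2t$ and $c_2\cos 2t - c_1\sin 2t$, viewed as functions on $S^3$, are invariant under this rotation, as already observed after \eqref{taut_description_2}, so $X_\parallel$ is globally well-defined. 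Finally, the module remark preceding \autoref{prop_l2_direct_sum} promotes the fiberwise $L^2$-orthogonality to global $L^2$-orthogonality, completing the direct-sum decomposition.
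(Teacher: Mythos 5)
Your proof is correct and follows essentially the same route as the paper's: a fiberwise $L^2$-projection of $(g,h)$ onto the two-dimensional span of the waveforms from \eqref{taut_description_2}, with your $c_1, c_2$ equal to the paper's $\alpha/(2\pi)$ and $\beta/(2\pi)$, and the residue verified against \autoref{taut_perp_description}. You are in fact slightly more explicit than the paper about the smoothness of the fiber integrals and the independence of the choice of origin $t=0$ on each fiber, which are welcome additions.
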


\begin{proof}
This would have been immediate if $T_{\mathrm{id}}\diffst$, with its $L^2$ inner product, were complete,
and thus a Hilbert space. Given a smooth vector field $X = fA + gB + hC$ on $S^3$, we want 
to write $X = X_1 + X_2$,
\begin{equation}
X_1 = f_1 A + g_1 B + h_1 C \in T_{\mathrm{id}}\auth \quad \text{and} \quad X_2 = f_2 A + g_2 B + h_2 C \in T_{\mathrm{id}}\auth^\perp.
\end{equation}
To begin, we must set $f_1 = f$ and $f_2 = 0$. We revisit the decoupled equations for vector fields in $T_{\mathrm{id}}\auth$ given in \eqref{taut_description_2}:
\begin{gather}
\begin{split}
\text{(i)}\quad g(t) &= c_1 \cos 2t + c_2 \sin 2t, \\
\text{(ii)}\quad h(t) &= c_2 \cos 2t - c_1 \sin 2t .
\end{split}
\end{gather}
Then consider the vector fields
\begin{equation}
Y = 0A + (\cos 2t)B - (\sin 2t)C,
\end{equation}
\begin{equation}
Z = 0A + (\sin 2t)B + (\cos 2t)C,
\end{equation}
obtained from $(i)$ and $(ii)$ above by first setting $c_1 = 1$ and $c_2 = 0$ to get $Y$, and then setting $c_1 = 0$ and $c_2 = 1$ to get $Z$. They form a ``basis'' for the horizontal vector fields in $\taut$ when this vector space is regarded as a module over the ring of smooth functions on $S^2$, and when such functions are lifted to smooth functions on $S^3$ constant on Hopf fibers.

In the $L^2$ inner product along a single Hopf fiber, we have
\begin{equation}
\langle Y, Y \rangle = \int\limits_0^{2\pi} \cos^2 2t + \sin^2 2t\, dt = 2\pi ,
\end{equation}
so in this sense, $Y / \sqrt{2\pi}$ is a unit vector, and likewise for $Z$. Furthermore $\langle Y, Z \rangle = 0$.

Now, given the vector field $X = fA + gB + hC$ on $S^3$, we work along a single Hopf fiber
and orthogonally project $X$ to $X_1 \in T_{\mathrm{id}}\auth$, writing
\begin{equation}
X_1 = fA + \frac{\langle X, Y \rangle}{2\pi} Y + \frac{\langle X, Z \rangle}{2\pi} Z .
\end{equation}
Here
\begin{gather}
\begin{split}
\langle X, Y \rangle &= \int\limits_0^{2\pi} g \cos 2t - h \sin 2t\, dt, \qquad \text{and} \qquad
\langle X, Z \rangle = \int\limits_0^{2\pi} g \sin 2t + h \cos 2t\, dt,
\end{split}
\end{gather}
which we abbreviate by $\alpha$ and $\beta$, respectively. Then
\begin{equation}
\begin{split}
X_1 &= fA + \frac{\alpha}{2\pi} (\cos 2t\, B - \sin 2t\, C) + \frac{\beta}{2\pi} (\sin 2t\, B + \cos 2t\, C) \\
&= fA + \left( \frac{\alpha}{2\pi} \cos 2t + \frac{\beta}{2\pi} \sin 2t \right) B + \left( -\frac{\alpha}{2\pi} \sin 2t + \frac{\beta}{2\pi} \cos 2t \right) C ,
\end{split}
\end{equation}
which lies in $T_{\mathrm{id}}\auth$. So the task now is to show that the difference,
\begin{equation}
\begin{split}
X_2 = X - X_1 = 0A + \left(g - \frac{\alpha}{2\pi} \cos 2t - \frac{\beta}{2\pi} \sin 2t\right) B \\
+ \left(h + \frac{\alpha}{2\pi} \sin 2t - \frac{\beta}{2\pi} \cos 2t\right) C,
\end{split}
\end{equation}
lies in $T_{\mathrm{id}}\auth^\perp$. In this last formula, let $g_2$ and $h_2$ abbreviate the coefficients of $B$ and $C$, respectively. So we must confirm the conditions for membership in $T_{\mathrm{id}}\auth^\perp$ as they were given in  \autoref{taut_perp_description}:
\begin{gather}\label{eq_ort_relation}
\begin{split}
\int\limits_0^{2\pi} g_2 \cos 2t - h_2 \sin 2t\, dt = 0, \qquad \text{and} \qquad 
\int\limits_0^{2\pi} g_2 \sin 2t + h_2 \cos 2t\, dt = 0.
\end{split}
\end{gather}
We prove the first equation in \eqref{eq_ort_relation}.
\begin{gather*}
\begin{split}
\int\limits_0^{2\pi} g_2 \cos 2t - h_2 \sin 2t\, dt  
&= \int\limits_0^{2\pi} (g - \frac{\alpha}{2\pi} \cos 2t - \frac{\beta}{2\pi} \sin 2t) \cos 2t - (h + \frac{\alpha}{2\pi} \sin 2t - \frac{\beta}{2\pi} \cos 2t) \sin 2t\, dt
\\
&= \int\limits_0^{2\pi} (g \cos 2t - h \sin 2t - \frac{\alpha}{2\pi})\, dt \\
&= \int\limits_0^{2\pi} (g \cos 2t - h \sin 2t)\, dt - \alpha = 0,
\end{split}
\end{gather*}
by definition of $\alpha$, confirming the first equation in \eqref{eq_ort_relation}. The second equation is confirmed in a similar way, using the definition of $\beta$.

Thus $X_2 = X - X_1$ really does lie in $T_{\mathrm{id}}\auth^\perp$. From the explicit formulas above, one can verify that the projection maps from $\mathrm{VF}(S^3)$ onto each summand are continuous, and this completes the proof of  \autoref{prop_l2_direct_sum}.
\end{proof}

\begin{rem}
$T_{\mathrm{id}}\auth^\perp$ will play the role of the tangent space $T_{\HH}\mathrm{Fib}(S^3)$,
on which our moduli space $\mathrm{Fib}(S^3)$ will be modeled as a Fréchet manifold.
\end{rem}


\newpage

\subsection{Balanced vector fields and diffeomorphisms}

We give here an alternative interpretation of the $L^2$ orthogonal complement to the tangent space 
at the identity of the automorphism group of the Hopf fibration, in which we view the elements 
of $T_{\mathrm{id}}\auth^\perp$ as balanced vector fields on $S^3$, and their images under the exponential map as balanced diffeomorphisms of $S^3$. First, we introduce some useful constructions and definitions.

\emph{Integrating a horizontal vector field along a Hopf fiber}. If $V$ is a horizontal vector field along a Hopf fiber $F$, we project it down to a circle's worth of tangent vectors to $S^2$ at the point $p(F)$ and integrate them with respect to arc length along $F$, write the result as $\int_F V$, and regard this as a tangent vector to $S^2$ at that point.

When we divide this vector by the length $2\pi$ of the Hopf fiber $F$, we denote it by
\begin{equation}
\overline{V} = \frac{1}{2\pi} \int\limits_F V
\end{equation}
and refer to $\overline{V}$ as the \emph{average value} of $V$ along $F$.

Later we will need to lift this tangent vector $\overline{V}$ to a horizontal vector field along the fiber $F$  
in $S^3$, and will still call it $\overline{V}$ there.  We keep in mind that the Hopf projection $p$ doubles the lengths of horizontal vectors, and so lifting vectors halves their lengths. Alternatively, one can take the base space of the Hopf fibration to be a two-sphere of radius $1/2$,
so that the Hopf projection will preserve the lengths of horizontal vectors.

We note that the image $\Exp_F\, \overline{V}$ of $F$ under the diffeomorphism $\Exp\, \overline{V}$ is also a Hopf fiber.

\begin{defn}
A vector field $V$ on $S^3$ is said to be \emph{balanced} if it is everywhere horizontal and if its integral $\int_F V$ along each Hopf fiber is zero.    
\end{defn}

We will see later that the horizontal left-invariant vector fields $B$ and $C$ are balanced along each Hopf fiber.

The linear space of balanced vector fields on $S^3$ will be denoted by $\mathrm{BVF}(S^3)$.
    It is a submodule of $\mathrm{VF}(S^3)$ over the ring of smooth real-valued functions on $S^2$,  
    equivalently, over the ring of smooth functions on $S^3$ which are constant along Hopf fibers.

\begin{defn}
If $V$ is a balanced vector field on $S^3$, not too large, then its image $f = \Exp\, V$ under the Riemannian exponential map, will be called a \emph{balanced diffeomorphism} of $S^3$. The set of these will be denoted by $\mathrm{Bal}(H)$.
\end{defn}

In the next section, we will prove that the subgroup $\auth$ of automorphisms of the Hopf fibration $\HH$ is a smooth Fréchet submanifold of $\diffst$,  and the argument we give will at the same time show that a neighborhood of the identity 
in $\mathrm{Bal}(H)$ is also a smooth Fréchet submanifold of $\diffst$.

After that, we will prove that a small neighborhood of the identity in $\mathrm{Bal}(H)$ can serve as a smooth local slice for our big bundle
\begin{equation}
\auth \subset \diffst \to \mathrm{Fib}(S^3).
\end{equation}

This in turn will lead us to an alternative proof of the theorem of Rubinstein et al \cite{rubinstein} that our moduli space $\mathrm{Fib}(S^3)$ of smooth fibrations of $S^3$ by oriented simple closed curves is a smooth Fréchet manifold, and that our big bundle above is a fiber bundle in the category of smooth Fréchet manifolds and smooth maps between them.

\begin{figure}[H]
    \centering
    \includegraphics[scale=0.7]{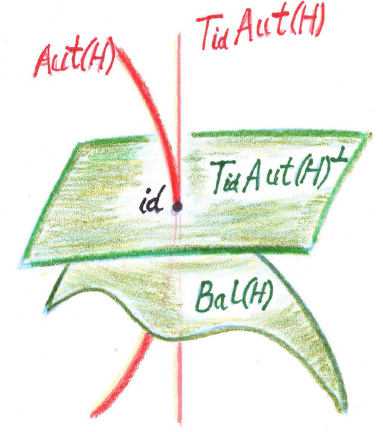}
    \caption{\centering We confirm that the tangent space at the identity to $\mathrm{Bal}(\HH)$ coincides $\text{with the $L^2$-orthogonal complement to the tangent space at the identity to $\auth$.}$}
    \label{fig:fig3}
\end{figure}

Superimposed in \autoref{fig:fig3} are portions of

\begin{itemize}
    \item[(1)] The infinite-dimensional Fréchet Lie group $\diffst$ of diffeomorphisms of the 
          three-sphere together with its Fréchet submanifolds $\auth$ of automorphisms of the Hopf fibration $\HH$, curved in red, and $\mathrm{Bal}(H)$ of balanced diffeomorphisms, curved in blue, and
    \item[(2)] The tangent space at the identity to $\diffst$, consisting of all smooth vector fields
          on the three-sphere, together with the tangent space $T_{\mathrm{id}} \auth$ at the identity to 
          $\auth$ in red, and its orthogonal complement in blue, which we will show is tangent to $\mathrm{Bal}(H)$ at the identity.
\end{itemize}

The left-invariant vector fields $A(x) = x i$, $B(x) = x j$, $C(x) = x k$ have right-invariant cousins $A^*(x) = i x$, $B^*(x) = j x$, $C^*(x) = k x$, and our left-handed Hopf fibration $\HH$ with fibers the orbits of $A$ has a right-handed cousin Hopf fibration $\HH^*$ with fibers the orbits of $A^*$.

The unit circle $F_0$ in the $(1, i)$-plane is a fiber of both $\HH$ and $\HH^*$ with the same orientation,
while the unit circle $F_1$ in the $(j, k)$-plane is a fiber of both $\HH$ and $\HH^*$ with opposite orientations.

The region between these two great circles is filled with parallel tori, with each torus filled by fibers of $\HH$, as shown in  \autoref{fig:HopfTsai}.

We visualize the projection map $p \colon S^3 \to S^2$ of the Hopf fibration $\HH$ as taking the fiber $F_0$ to the south pole, the fiber $F_1$ to the north pole, and the various tori between $F_0$ and $F_1$ to the various circles of latitude between the two poles on $S^2$.

For any one of these tori, the Hopf projection $p$ takes each fiber of $\HH$ on it to a point on the
corresponding circle of latitude on $S^2$.

But these same tori are also filled by the great circle fibers of $\HH^*$, with a sample torus shown below, and on it a fiber of $\HH$ colored red and a fiber of $\HH^*$ colored blue. Each red $(1,-1)$ fiber meets each blue $(1,1)$ fiber in a pair of points, as shown below.

\begin{figure}[H]
    \centering
    \includegraphics[scale=0.5]{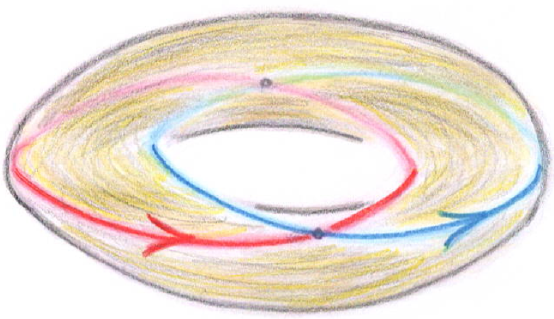}
    \caption{\centering The projection map $p$ of $\HH$ takes each fiber of $\HH^*$ to the entire circle of latitude on $S^2$, wrapping it twice uniformly around that circle.}
    \label{fig:fig4}
\end{figure}

A key observation is the following.

\begin{prop}\label{prop_exp_B}
The Riemannian exponential image $\Exp_{F_0} (\epsilon B)$ of the horizontal  
left-invariant vector field $\epsilon B$ along $F_0$ is a fiber of the right-handed Hopf fibration $\HH^*$   
lying on the torus boundary of the $\epsilon$-tubular neighborhood of the great circle $F_0$.
\end{prop}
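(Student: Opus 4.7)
The plan is to set up quaternion coordinates on $F_0$ and compute the exponential map directly, then recognize the resulting curve as a left-multiplication orbit. First I would parametrize $F_0$ by $\gamma(t) = \cos t + i \sin t$ for $t \in [0, 2\pi]$ and compute
\[
B(\gamma(t)) = \gamma(t)\, j = \cos t \cdot j + \sin t \cdot k,
\]
which is a unit vector in $\mathbb{R}^4$ orthogonal to both $\gamma(t)$ and $A(\gamma(t))$, hence a horizontal unit tangent vector to $S^3$ along $F_0$.

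Next, I would use the fact that geodesics of $S^3$ with its round metric are great circles traversed at unit speed, so $\Exp_x(\epsilon v) = \cos(\epsilon)\, x + \sin(\epsilon)\, v$ inside $\mathbb{R}^4$ whenever $v \in T_x S^3$ is a unit vector. Substituting $v = B(\gamma(t))$ produces a closed-form expression for $\Exp_{\gamma(t)}(\epsilon B)$ as an explicit $\mathbb{R}$-linear combination of $1, i, j, k$ with trigonometric coefficients in $t$ and $\epsilon$.

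The key recognition step, which I expect to be the main (but mild) difficulty, is matching this expression with a left-multiplication orbit. A direct quaternion multiplication using $ij = k$ verifies the identity
\[
e^{it}\,(\cos\epsilon + j\sin\epsilon) \;=\; \cos(\epsilon)\,\gamma(t) + \sin(\epsilon)\, B(\gamma(t)).
\]
Setting $q_0 := \cos\epsilon + j\sin\epsilon$, the image curve is therefore the orbit $\{\,e^{it} q_0 : t \in [0,2\pi]\,\}$. Since left multiplication by $e^{it}$ is precisely the flow of the right-invariant vector field $A^*(x) = ix$, this orbit is by definition a fiber of the right-handed Hopf fibration $\HH^*$.

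Finally, because $B$ is horizontal of unit length along $F_0$, each geodesic segment from $\gamma(t)$ to $\Exp_{\gamma(t)}(\epsilon B)$ has length $\epsilon$ and leaves $F_0$ orthogonally; hence the whole image lies at Riemannian distance exactly $\epsilon$ from $F_0$, that is, on the boundary torus of the $\epsilon$-tubular neighborhood of $F_0$. This completes the proof.
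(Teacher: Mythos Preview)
Your proof is correct and follows essentially the same route as the paper: both compute $\Exp_{\gamma(t)}(\epsilon B) = \gamma(t)(\cos\epsilon + j\sin\epsilon)$ via the great-circle formula and then observe that, because the $t$-dependent factor $\gamma(t)=e^{it}$ sits on the left, the image is a left-multiplication orbit and hence a fiber of $\HH^*$. Your presentation is slightly more explicit (you write out $B(\gamma(t))$ in coordinates and justify the $\epsilon$-tube claim), but the argument is the same.
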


\begin{proof}
Let the unit quaternion $q$ denote some point of $S^3$. Then $\Exp_q (\epsilon B)$ lies on the great 
circle arc between $q$ and $qj$, and hence
\begin{equation}
\Exp_q (\epsilon B) = (\cos \epsilon) q + (\sin \epsilon) qj = q (\cos \epsilon + \sin \epsilon\, j).
\end{equation}

When $q = (\cos t + \sin t\, i)$ is an arbitrary point of the Hopf fiber $F_0$, we have
\begin{equation}
\Exp_q (\epsilon B) = (\cos t + \sin t\, i)\, (\cos \epsilon + \sin \epsilon\, j).
\end{equation}

Because the factor $(\cos t + \sin t\, i)$ above is on the left, then as $t$ varies between $0$ and $2\pi$,  
these points above trace out a fiber of the Hopf fibration $\HH^*$ through the point $\cos \epsilon + \sin \epsilon\, j$,   
lying on the torus boundary of the $\epsilon$-tubular neighborhood  of $F_0$,  as claimed.
\end{proof}

\begin{cor}
The horizontal left-invariant vector fields $B$ and $C$ are balanced along the 
fibers of the Hopf fibration $\HH$.
\end{cor}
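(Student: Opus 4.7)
The plan is to derive the corollary directly from \autoref{prop_exp_B} combined with the transitive action of $S^3$ on itself by left multiplication. Horizontality is immediate: $B(x) = xj$ and $C(x) = xk$ are pointwise orthogonal in $\mathbb{R}^4$ to $A(x) = xi$, hence to every Hopf tangent. The content of the corollary is therefore the vanishing of $\int_F B$ and $\int_F C$ as vectors in $T_{p(F)} S^2$, for every Hopf fiber $F$.

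I would begin with the central case $F = F_0$, the unit circle in the $(1, i)$-plane, parametrized by $q(t) = \cos t + \sin t \cdot i$. By \autoref{prop_exp_B}, the curve $t \mapsto \Exp_{q(t)}(\epsilon B)$ is an $\HH^*$-fiber on the torus at distance $\epsilon$ from $F_0$, and by the discussion around \autoref{fig:fig4}, $p$ wraps that $\HH^*$-fiber uniformly twice around a circle of latitude on $S^2$ close to $p(F_0)$. Differentiating at $\epsilon = 0$ then identifies $p_* B(q(t))$, relative to a suitable orthonormal frame $\{e_1, e_2\}$ on $T_{p(F_0)} S^2$, with $\cos 2t \cdot e_1 + \sin 2t \cdot e_2$ up to an overall phase. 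Integrating this uniformly rotating vector over one period gives $\int_{F_0} B = 0$.

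To extend to any Hopf fiber $F$, I would invoke left-invariance: for each unit quaternion $u$, left multiplication $L_u$ is an isometry of $S^3$ that permutes Hopf fibers, preserves $B$, and descends through $p$ to an isometry of $S^2$. Since $L_u$ acts transitively on Hopf fibers, writing $F = u F_0$ transports the vanishing at $F_0$ verbatim to $F$. The case of $C$ is identical, using the $C$-version of \autoref{prop_exp_B}: the same quaternion computation shows $\Exp_{q(t)}(\epsilon C) = (\cos t + \sin t \cdot i)(\cos \epsilon + \sin \epsilon \cdot k)$ is again an $\HH^*$-fiber.

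The one point requiring care is the \emph{uniform double} wrapping of $\HH^*$-fibers over $\HH$-latitude circles; this factor of two, reflecting $[A,B] = 2C$, is precisely what makes the projected vector field rotate (rather than point in a fixed direction) along the fiber, and hence integrate to zero. As a cross-check, I would also note that for $X = B$ one has $f = 0$, $g = 1$, $h = 0$, so the two conditions in \autoref{taut_perp_description} reduce to $\int_0^{2\pi} \cos 2t \, dt = 0$ and $\int_0^{2\pi} \sin 2t \, dt = 0$, giving an independent confirmation that $B \in T_{\mathrm{id}}\auth^\perp$.
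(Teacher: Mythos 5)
Your proposal is correct and follows essentially the same route as the paper: use \autoref{prop_exp_B} to see that $\Exp_{F_0}(\epsilon B)$ is an $\HH^*$-fiber wrapped uniformly twice around a latitude circle by $p$, so the projected field rotates uniformly and integrates to zero, then transport to all fibers by left-invariance. Your added cross-check against \autoref{taut_perp_description} and the explicit quaternion computation for $C$ are nice refinements of details the paper leaves implicit, but the argument is the same.
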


\begin{proof}
Since $\Exp_{F_0} (\epsilon B)$ is a fiber of $\HH^*$, it is taken by the projection map $p$ of $\HH$ twice 
around a circle of latitude on $S^2$. Therefore the horizontal vector field $\epsilon B$ along $F_0$ is taken 
uniformly twice around a circle of radius $2\epsilon$ in the tangent plane to $S^2$ at the south pole.   
Hence its integral there is zero, and so $\epsilon B$ is balanced along $F_0$.

Likewise, $B$ and $C$ are balanced along $F_0$, and then by left-invariance, they are balanced along 
every fiber of $\HH$.
\end{proof}

\newpage

\begin{theorem}
The $L^2$ orthogonal complement $T_{\mathrm{id}}\auth^\perp$ to the tangent space at the identity of the automorphism group $\auth$ coincides with the subspace of horizontal vector fields on $S^3$ which are balanced with respect to the Hopf fibration $\HH$. In other words, $\mathrm{PVF}(S^3) \oplus \mathrm{BVF}(S^3) = \mathrm{VF}(S^3)$.
\end{theorem}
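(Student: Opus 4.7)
The plan is to reduce the theorem to the characterization of $T_\mathrm{id}\auth^\perp$ already obtained in \autoref{taut_perp_description} and then translate the two trigonometric integral conditions there into the single vector-valued balance condition $\int_F X = 0$ in $T_{p(F)}S^2$. Since \autoref{prop_l2_direct_sum} gives the $L^2$ direct-sum decomposition $T_\mathrm{id}\auth \oplus T_\mathrm{id}\auth^\perp = \mathrm{VF}(S^3)$, and since $T_\mathrm{id}\auth = \mathrm{PVF}(S^3)$ by the remarks following \autoref{prop_taut_description}, the splitting statement $\mathrm{PVF}(S^3) \oplus \mathrm{BVF}(S^3) = \mathrm{VF}(S^3)$ will follow as soon as we show $T_\mathrm{id}\auth^\perp = \mathrm{BVF}(S^3)$.

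First I would restrict to horizontal fields: membership in either space forces the $A$-coefficient to vanish, so from the outset we write $X = gB + hC$ and work along a single Hopf fiber $F = F_0 = \{e^{it} : 0 \le t \le 2\pi\}$, with the result on general fibers following by left-invariance of the Hopf structure and of the $L^2$ inner product. The crux is then to compute the horizontal frame $(p_*B, p_*C)$ along $F$ as a pair of curves in the two-dimensional vector space $T_{p(F)}S^2$. By \autoref{prop_exp_B}, $\mathrm{Exp}_{F_0}(\epsilon B)$ is a fiber of the right-handed Hopf fibration $\HH^*$, and $p$ wraps this fiber uniformly \emph{twice} around the circle of latitude through $p(F_0)$. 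Differentiating at $\epsilon = 0$ produces an orthonormal basis $(E_1,E_2)$ of $T_{p(F_0)}S^2$ and a constant $r > 0$ such that
\begin{equation*}
p_*B(e^{it}) = r(\cos 2t\, E_1 + \sin 2t\, E_2).
\end{equation*}
Since $B$ and $C$ are orthonormal horizontal vector fields and $p_*$ is a conformal isomorphism on the horizontal bundle, $p_*C(e^{it})$ has the same length as $p_*B(e^{it})$ and is rotated by a quarter turn at every point; fixing orientations using $\mathcal{L}_A B = 2C$ and $\mathcal{L}_A C = -2B$ (equivalently, going through the same derivative computation for $\mathrm{Exp}_{F_0}(\epsilon C)$) gives
\begin{equation*}
p_*C(e^{it}) = r(-\sin 2t\, E_1 + \cos 2t\, E_2).
\end{equation*}

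With these formulas in hand, for $X = gB + hC$ we compute
\begin{equation*}
p_*X(e^{it}) = r\bigl[(g\cos 2t - h\sin 2t)E_1 + (g\sin 2t + h\cos 2t)E_2\bigr],
\end{equation*}
so the vanishing of $\int_F X \in T_{p(F)}S^2$ is equivalent to the simultaneous vanishing of
\begin{equation*}
\int_0^{2\pi}(g\cos 2t - h\sin 2t)\,dt \qquad \text{and} \qquad \int_0^{2\pi}(g\sin 2t + h\cos 2t)\,dt,
\end{equation*}
which are precisely conditions (2) and (3) of \autoref{taut_perp_description}. This identifies $T_\mathrm{id}\auth^\perp$ with $\mathrm{BVF}(S^3)$ fiber by fiber, and hence on all of $S^3$.

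\textbf{Expected main obstacle.} The routine part is bookkeeping; the substantive step is pinning down the correct phase and sign relationship between $(p_*B, p_*C)$ and $(\cos 2t, \sin 2t)$ along a fiber. This hinges on the doubled winding number in \autoref{prop_exp_B}, which is what forces the angular frequency $2$ in the integrands and matches the frame-rotation rate $2$ visible in the Lie brackets $[A,B] = 2C$, $[A,C] = -2B$. Once the $(\cos 2t, \sin 2t)$ structure of the pushed-down horizontal frame is secured, the equivalence of the two descriptions of $\mathrm{BVF}(S^3)$ follows immediately.
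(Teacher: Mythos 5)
Your proposal is correct and follows essentially the same route as the paper: both arguments reduce to the two trigonometric integral conditions of \autoref{taut_perp_description}, use \autoref{prop_exp_B} to identify the pushed-down horizontal frame as $p_*B \propto (\cos 2t, \sin 2t)$ and $p_*C \propto (-\sin 2t, \cos 2t)$ in $T_{p(F)}S^2$, and observe that the vanishing of $\int_F p_*X\,dt$ is exactly the pair of orthogonality conditions. The only cosmetic difference is that the paper pins the overall scale factor to $2$ (the length-doubling of the Hopf projection) where you leave it as an unspecified constant $r>0$, which is harmless since the conditions are homogeneous.
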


\begin{proof}
By \autoref{taut_perp_description}, a horizontal vector field $Y = g\,B + h\,C$ is orthogonal to $T_{\mathrm{id}}\auth$
if and only if along each Hopf fiber $F$ we have
\begin{equation}\label{local_ort_condition}
\int\limits_0^{2\pi} (g \cos 2t - h \sin 2t) \, dt = 0 \qquad \text{and} \qquad \int\limits_0^{2\pi} (g \sin 2t + h \cos 2t) \, dt = 0 .
\end{equation}
From \autoref{prop_exp_B}, we can choose coordinates on the tangent plane $T_{p(F)}S^2$
as the ordinary $xy$-plane, and mindful that $B$ and $C$ are orthogonal to one another, write:
\begin{equation}
p_*B = 2(\cos 2t , \sin 2t) \qquad \text{and} \qquad p_*C = 2(-\sin 2t , \cos 2t) .
\end{equation}
Now the condition that the horizontal vector field $Y = g\,B + h\,C$ be balanced is that
for each Hopf fiber we have $\int\limits_0^{2\pi} p_*Y\, dt = 0$. But then
\begin{equation}
\begin{split}
p_*Y &= g\, p_*B + h\, p_*C \\
     &= g \cdot 2 (\cos 2t, \sin 2t) + h \cdot 2 (-\sin 2t, \cos 2t) \\
     &= 2 \big( g \cos 2t - h \sin 2t, \; g \sin 2t + h \cos 2t \big) .
\end{split}
\end{equation}
Hence $\int\limits_0^{2\pi} p_*Y\, dt = 0$ if and only if
\begin{equation}
\int\limits_0^{2\pi} (g \cos 2t - h \sin 2t)\, dt = 0 \qquad \text{and} \qquad \int\limits_0^{2\pi} (g \sin 2t + h \cos 2t)\, dt = 0 .
\end{equation}
But this is precisely the condition in equation \eqref{local_ort_condition} that $Y =$ $g\,B$ $+$ $h\,C$ be orthogonal to $T_{\mathrm{id}}\auth$.
This shows that the set of balanced horizontal vector fields on $S^3$ coincides with
the orthogonal complement $T_{\mathrm{id}}\auth^\perp$ of $T_{\mathrm{id}}\auth$ inside $\mathrm{VF}(S^3)$, as claimed.
\end{proof}

\newpage

\section{$\aut$ is a smooth Fr\'echet submanifold of $\diff$}\label{submanifoldsection} 

In this section we will introduce the \emph{aligned Riemannian exponential map}, a variant of the
ordinary Riemannian exponential map which is nicely adapted to the Hopf fibration of $S^3$, and show that both of these exponential maps induce the same smooth Fr\'echet manifold
structure on the diffeomorphism group $\diffst$.

Then we will use this to prove Theorem \ref{submanifold_thm}, that $\auth$ is a smooth Fr\'echet submanifold of $\diffst$. First a picture to guide us, and a detailed description of it.

\begin{figure}[H]
    \centering
    \includegraphics[scale=0.6]{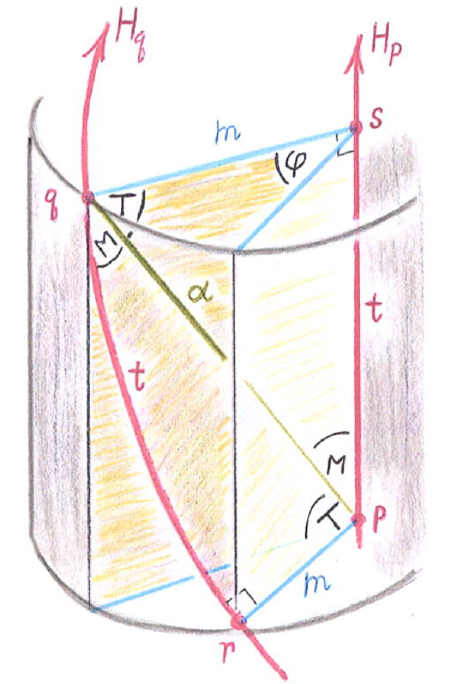}
    \caption{\centering Open book decomposition of $S^3$ with Hopf fiber $\HH_p$ as binding.}
    \label{fig:fig5}
\end{figure}
\noindent\textbf{Description of \autoref{fig:fig5} and setup for the proof of \autoref{submanifold_thm}}.

\begin{itemize}
    \item Let $V$ be any smooth vector field on $S^3$, close to the zero field.
\item Let $p$ be any point on $S^3$, at which we start.
\item Let $q = \big(\Exp V\big)(p)$, where $\Exp$ is the Riemannian exponential map. Smooth dependence of geodesics on initial conditions tells us that $q$ depends smoothly on $p$ and $V$.
\item Consider the oriented Hopf fibers $\HH_p$ through $p$ and $\HH_q$ through $q$, shown in red.
\item Shown in blue are two minimizing geodesic arcs of length $m$ between these fibers, one from $p$ to $r$ at the bottom of the figure, and the other from $q$ to $s$ at the top.
Smoothness of the nearest neighbor map tells us that $r$, $s$ and $m$ depend smoothly on $p$ and $V$.
\item Shown in green is the geodesic arc $\Exp\, V$ from $p$ to $q$, of length $\|\Exp\,V\| = \| V \| = \alpha$.
\item This green geodesic arc, labeled $\alpha$, can help us to visualize its tangent vector $V(p)$.
\item The \emph{horizontal run} between $p$ and $q$ is the distance $m$.
\item The \emph{vertical rise} between $p$ and $q$ is the signed distance $t$ along $\HH_p$ from $p$ to $s$,
which is equal to the signed distance along $\HH_q$ from $r$ to  $q$. This vertical rise also depends smoothly on $p$ and $V$.
\item Consider the open book decomposition of $S^3$ with binding $\HH_p$ and with pages the circle's worth of great hemispheres with $\HH_p$ as equator. The two ``rectangles'' in Figure \ref{fig:fig5} are contained in two pages of this open book decomposition. The right page is spanned by $\HH_p$ and the lower blue geodesic from $p$ to $r$, while the left page is spanned by $\HH_p$ and the upper blue geodesic from $q$ to $s$. The \emph{turn angle} is the angle $\varphi$ between these two pages.
\item Note that the green geodesic arc from $p$ to $q$ of length $\alpha$ must lie on the left page of the book, since its endpoints do and since these pages are totally geodesic.
Hence the tangent vector $V(p)$ is tangent to this left page.
\item The Hopf fiber $\HH_q$ cuts steadily across the pages of our open book decomposition. A full trip around $\HH_q$ has length $2\pi$, during which it cuts across $2\pi$ worth of pages, the full book. It follows that the portion of $\HH_q$ of length $t$ shown above cuts across just those many pages of our book, and hence that the vertical rise $t$ between $p$ and $q$ equals the turn angle $\varphi$ shown between the two pages of the book: $t = \varphi$.

\end{itemize}

Since our goal is to prove that $\auth$ is a smooth Fr\'echet submanifold of $\diffst$, we recall the precise definition.

\begin{defn}[\cite{Ham}]
Let $M$ be a Fr\'echet manifold and $N$ a closed subset. Then $N$ is a \emph{Fr\'echet submanifold} of $M$ if every point of $N$ lies in the domain of a coordinate chart on $M$
with range in a product of Fr\'echet spaces $F \times G$ such that a point in the domain of the chart lies in the subset $N$ if and only if its image under the chart lies in the subset $F \times \{0\}$.
\end{defn}

Now let $V$ be any smooth vector field on $S^3$ close to the zero field. Convert $V$ to another smooth vector field $X$ on $S^3$ in two steps as follows:

First, \emph{rescale} the vertical and horizontal components of $V$ separately to convert $V$ into the vector field $W$,
\begin{equation}\label{V_to_W}
W = t\dfrac{V^{\|}}{\| V^{\|} \|} + m\dfrac{V^{\perp}}{\| V^{\perp} \|}   
\end{equation}
where at each point $p \in S^3$, the tangent vectors $V^{\|}$ and $V^{\perp}$ are the components of $V$ parallel
to and orthogonal to the Hopf fiber through $p$, and the coefficients $t$ and $m$ are the vertical rise and horizontal run of $\Exp V(p)$ shown in  \autoref{fig:fig5} and explained in its description.

\begin{rem}
Formula \eqref{V_to_W} for $W$ does not make sense when either $V^{\|}$ or $V^{\perp}$ is zero, so we will give an alternative expression for it to confirm that it is well-defined and depends smoothly on $V$.
\end{rem}

Second, \emph{twist} $W$ about each point $p$ of each Hopf fiber through an angle equal to the vertical rise $t$ of $\Exp\, V(p)$ to get the vector field $X$ on $S^3$.

We will define the aligned Riemannian exponential map $\aexp$ in a moment, but state in advance

\begin{prop}\label{prop_aexp_exp}
$\aexp(X) = \Exp(V)$, where the vector fields $V$ and $X$ are given above, or alternatively expressed:
\begin{equation}
\aexp \circ \mathrm{Twist} \circ \mathrm{Rescale} = \Exp.
\end{equation}
Furthermore, both \emph{Rescale} and \emph{Twist} are diffeomorphisms between open neighborhoods of the zero vector field in $\vf$, and hence $\aexp$ and $\Exp$ define the same smooth Fr\'echet manifold structure on $\diffst$.
\end{prop}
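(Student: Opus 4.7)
The plan is to verify first the pointwise geometric identity $\aexp(X) = \Exp(V)$ and then confirm that both \emph{Rescale} and \emph{Twist} are smooth Fr\'echet diffeomorphisms on a neighborhood of the zero field. Once these are in place, the relation $\aexp = \Exp \circ (\mathrm{Twist} \circ \mathrm{Rescale})^{-1}$ exhibits $\aexp$ as the composition of the standard Riemannian exponential chart with a Fr\'echet diffeomorphism of the model space, so $\aexp$ provides a coordinate chart defining the same smooth Fr\'echet manifold structure on $\diffst$.

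For the geometric identity, I would trace through the construction in \autoref{fig:fig5} pointwise. At $p \in S^3$, the definition of $\aexp$ (implicit in the setup) produces the endpoint of the broken-geodesic path that travels along $\HH_p$ by the signed distance equal to the vertical magnitude of its input, and then horizontally by the horizontal magnitude of its input. Reading off the figure, $q = \Exp(V)(p)$ is exactly such a broken-geodesic endpoint: one goes along $\HH_p$ by signed distance $t$ to reach $s$, and then horizontally by distance $m$ on the left-hand page of the open book decomposition to reach $q$. The field $W$ produced by Rescale has vertical component of magnitude $t$ and horizontal component of magnitude $m$, so the only remaining question is the horizontal \emph{direction}. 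The horizontal part $V^{\perp}(p) = W^{\perp}(p)$ lies on the left-hand page at $p$, while $\aexp$ needs a horizontal direction on the left-hand page at $s$. Moving signed distance $t$ along $\HH_p$ rotates horizontal planes through the open book by exactly the turn angle $\varphi = t$ observed in the description of \autoref{fig:fig5}. Since Twist rotates the horizontal component at $p$ by exactly that angle, after parallel transport along $\HH_p$ the horizontal part of $X(p)$ points in the required direction, yielding $\aexp(X)(p) = q$.

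For the diffeomorphism statements, the one that needs care is Rescale; Twist is a fiberwise rotation of horizontal planes by the smooth angle $t$, with smooth inverse given by rotation by $-t$, and is a diffeomorphism on any neighborhood of $0$ where $|t|$ stays bounded away from $\pi$. For Rescale, the worry is that the formula
\begin{equation*}
W = \frac{t(V,p)}{\|V^{\|}(p)\|}\, V^{\|}(p) + \frac{m(V,p)}{\|V^{\perp}(p)\|}\, V^{\perp}(p)
\end{equation*}
appears singular where $V^{\|}$ or $V^{\perp}$ vanishes pointwise. A short calculation in normal coordinates along each Hopf fiber gives the Taylor expansions $t = \|V^{\|}\| + O(\|V\|^2)$ and $m = \|V^{\perp}\| + O(\|V\|^2)$ as $V \to 0$, so the ratios extend to smooth functions of $V$ equal to $1$ at $V = 0$. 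Because the operation is a pointwise function of the value (and low-order jet) of $V$, smoothness in the $C^\infty$ topology on the compact manifold $S^3$ follows; the derivative at the zero field is the identity, and a pointwise inverse by reciprocal rescaling is smooth by the same argument, so Rescale is a smooth Fr\'echet diffeomorphism of some open neighborhood of $0$ in $\vf$.

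The main obstacle I anticipate is precisely this smoothness of Rescale across the locus where the vertical or horizontal part of $V$ vanishes; everything else reduces to reading the figure and invoking the fact that pointwise smooth operations are smooth in $C^\infty$ topology on a compact manifold. With Rescale and Twist established as local diffeomorphisms near $0 \in \vf$ and the pointwise identity $\aexp \circ \mathrm{Twist} \circ \mathrm{Rescale} = \Exp$ in hand, the standard Riemannian chart transports to an aligned Riemannian chart, and the two exponential maps induce the same smooth Fr\'echet manifold structure on $\diffst$, as claimed.
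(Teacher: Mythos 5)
Your overall architecture matches the paper's: verify the pointwise identity $\aexp(X)=\Exp(V)$ by reading \autoref{fig:fig5} (using the turn angle $\varphi=t$ to see that the twist delivers the horizontal direction to the correct page of the open book), observe that Twist is an explicit fiberwise rotation, and reduce the equivalence of Fr\'echet structures to Rescale and Twist being diffeomorphisms near $0\in\vf$. The gap is in the one step you yourself flag as the main obstacle: smoothness of Rescale across the locus where $V^{\|}$ or $V^{\perp}$ vanishes. The expansions $t=\|V^{\|}\|+O(\|V\|^2)$ and $m=\|V^{\perp}\|+O(\|V\|^2)$ do not imply that the ratios $t/\|V^{\|}\|$ and $m/\|V^{\perp}\|$ extend smoothly, or even continuously, to $1$: at a point $p$ where $\|V^{\|}(p)\|$ is much smaller than $\|V(p)\|^{2}$ the error term is not controlled by the denominator, and the degeneration happens pointwise for fields $V$ that are not themselves tending to zero, so an expansion ``as $V\to 0$'' does not address it. What is actually needed is a factorization $t=\|V^{\|}\|\cdot\rho$ with $\rho$ smooth and positive, which is essentially the content to be proved. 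The paper avoids the quotient entirely by writing the two summands in closed form,
\begin{equation*}
W \;=\; \Log_p(s) \;+\; \mathcal{P}\bigl(\Log_s(q)\bigr),
\end{equation*}
where $s$ is the foot of the minimizing geodesic from $q=\Exp_p V$ to $\HH_p$ and $\mathcal{P}$ is parallel transport along $\HH_p$ from $s$ back to $p$; each ingredient (geodesic endpoint, nearest-point map, Riemannian logarithm, parallel transport) is smooth in $(p,V)$, so no division ever occurs.

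A second, related issue: invertibility of Rescale cannot be inferred from ``the derivative at the zero field is the identity,'' since the inverse function theorem fails for general smooth maps of Fr\'echet spaces, and ``reciprocal rescaling'' reintroduces exactly the same division-by-zero problem. The paper instead inverts Rescale by an explicit chain: extract $W^{\|}=\langle W,A\rangle A$, set $s=\Exp_p(W^{\|})$, parallel transport $W^{\perp}$ up to $s$ to recover $\Log_s(q)$, exponentiate to recover $q$, and finally set $V=\Log_p(q)$ --- each step smooth by the same reasoning. (Your worry about $|t|$ near $\pi$ for Twist is unnecessary: rotations are invertible for every angle, and untwisting only requires recovering $t$ from $X$, which works because Twist preserves the vertical component.) If you replace the Taylor-expansion step and the derivative argument with the logarithm/parallel-transport formulation in both directions, the rest of your proposal goes through and coincides with the paper's proof.
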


\medskip

\begin{prop}\label{prop_aexp_neighborhood}
The aligned Riemannian exponential map $\aexp$ takes a neighborhood of the zero vector field in $\mathrm{VF}(S^3)= T_{\mathrm{id}}\auth\oplus   T_{\mathrm{id}}\auth^\perp$
to a $\diffst$ neighborhood of any diffeomorphism
in $\auth$ so that the vectors in $T_{\mathrm{id}}\auth \oplus 0$ are matched with diffeomorphisms in $\auth$.
\end{prop}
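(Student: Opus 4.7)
The plan is to prove this in three steps: first invoke the local diffeomorphism property of $\aexp$, then establish the key biconditional matching $\taut$ with $\auth$ at the identity, and finally propagate the chart to any $\phi \in \auth$ by group translation. The content of the proposition is that the algebraic splitting $\vf = \taut \oplus \taut^\perp$ is carried by $\aexp$ precisely onto the geometric inclusion $\auth \subset \diffst$.

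First, I would invoke \autoref{prop_aexp_exp} to conclude that $\aexp$ is a smooth local diffeomorphism from a neighborhood $U$ of the zero vector field in $\vf$ onto a neighborhood of the identity in $\diffst$; up to this point there is nothing further to prove. Second, I would establish the key biconditional: for $X \in U$, we have $\aexp(X) \in \auth$ if and only if $X \in \taut$. For the forward direction, write $X = fA + gB + hC$ with
\begin{equation*}
g(t) = c_1 \cos 2t + c_2 \sin 2t, \qquad h(t) = c_2 \cos 2t - c_1 \sin 2t,
\end{equation*}
as in equation \eqref{taut_description_2}, where $c_1, c_2$ are constant along each Hopf fiber. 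Then by the calculation preceding \autoref{prop_l2_direct_sum}, the horizontal component $gB + hC$ satisfies $p_*(gB + hC) = 2(c_1, c_2)$ at every point of each Hopf fiber $F$. By the geometric construction of $\aexp$ described after Figure \ref{fig:fig5}, $\aexp(X)$ moves each point $p$ along the page of the open-book decomposition with binding $\HH_p$, with horizontal run and vertical rise taken from the respective components of $X(p)$ in the Hopf frame. Since these projected horizontal data are the same tangent vector at $p(F)$ for every $p \in F$, the entire fiber $F$ is carried onto a single Hopf fiber $F'$, and the vertical component $fA$ merely slides each image point along $F'$; hence $\aexp(X) \in \auth$. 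For the converse, if $\aexp(X) \in \auth$ then $\aexp(X)$ must carry each Hopf fiber to a single Hopf fiber, which by the same geometric dictionary forces $p_*(gB+hC)$ to be constant along each $F$, and therefore forces $(g,h)$ to be of the form above, placing $X$ in $\taut$.

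Third, to obtain charts at an arbitrary $\phi \in \auth$, use that $\diffst$ is a Fréchet Lie group and that $\auth$ is a subgroup: left composition $L_\phi : \psi \mapsto \phi \circ \psi$ is a smooth diffeomorphism of $\diffst$ that restricts to a diffeomorphism of $\auth$. The composite $X \mapsto \phi \circ \aexp(X)$ then provides the desired chart at $\phi$, sending a neighborhood of $0$ in $\vf = \taut \oplus \taut^\perp$ onto a neighborhood of $\phi$ in $\diffst$ and carrying the summand $\taut \oplus 0$ onto a neighborhood of $\phi$ in $\auth$, with the complementary summand $0 \oplus \taut^\perp$ transverse to $\auth$.

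The main obstacle is the forward direction of the biconditional in the second step. One must verify that the rescaling and twisting built into $\aexp$ (through equation \eqref{V_to_W} and the identity $t = \varphi$ from the open-book description) conspire exactly so that projectable vector fields produce fiber-preserving diffeomorphisms: the twist through the rise angle $t$ is precisely what aligns the images of distinct points of $\HH_p$, which would otherwise travel along geodesics in distinct pages, onto a common target fiber $\HH_q$. This is the one place where the choice of $\aexp$ over the ordinary $\Exp$ is essential, and where the geometry of the Hopf open-book decomposition does the real work.
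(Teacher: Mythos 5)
Your proof takes essentially the same route as the paper: invoke \autoref{prop_aexp_exp} for the local diffeomorphism property, establish that $\aexp(X)\in\auth$ exactly when $X\in\taut$, and then translate the chart to an arbitrary $\phi\in\auth$ by composition. In fact you supply more detail than the paper does at the key step — the paper simply asserts that aligned fields exponentiate to automorphisms ``and not otherwise,'' whereas your observation that $p_*(gB+hC)=2(c_1,c_2)$ is constant along each Hopf fiber, combined with the twist built into $\aexp$, is precisely the justification that assertion needs.
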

This will give us \autoref{submanifold_thm}.

\subsection{Definition of the aligned Riemannian exponential map}

Recall that the Riemannian exponential map $\Exp$ takes a vector field $V$ on $S^3$ to a
diffeomorphism $\Exp(V)$ of $S^3$, which sends any point $p$ on $S^3$ to the endpoint of the
geodesic arc beginning at $p$ and tangent there to the vector $V(p)$, and of length equal to the length of this vector.

Thanks to the existence and uniqueness of geodesics with prescribed initial conditions, and to smooth dependence on parameters, a vector field $V$ in a small $C^{\infty}$ neighborhood of the zero field exponentiates to a diffeomorphism of $S^3$ which is $C^{\infty}$ close to the identity.

The aligned Riemannian exponential map takes a vector field $X$ on $S^3$ to a diffeomorphism $\aexp(X)$ of $S^3$ which is defined with respect to a given Hopf fibration of $S^3$ as follows.

First we decompose the vector field $X$ into components parallel to and orthogonal to the Hopf fiber $\HH_p$ through each point $p \colon X = X^{\|} + X^{\perp}$.

Then $p$ is moved a distance $t = \| X^{\|} \|$ along this Hopf fiber to the temporary location ${s = \Exp X^{\|}(p)}$, shown in \autoref{fig:fig5}.

At the same time, the vector $X^{\perp}(p)$ is moved along this Hopf fiber up to the point $s = \Exp X^{\|}(p)$, turning about the fiber so that its projection to the base $S^2$ of the Hopf fibration never changes.
This rotated version of $X^{\perp}(p)$, now located at the point $s$, then directs a geodesic arc orthogonal to the Hopf fibers and of length $m = \|X^{\perp}(p)\|$, ending at the point $q$.

We define $\aexp X(p) = q$.

For the same reasons as above, vector fields close to zero exponentiate this way to diffeomorphisms of $S^3$ close to the identity. In  \autoref{fig:fig5} above, we see that $\aexp X(p) = q = \Exp V(p)$.

\subsection{Riemannian exponentials and logarithms on $S^3$}

We repeat from the previous page that if $p$ is a point on $S^3$ and $V$ is a vector tangent to $S^3$ at $p$,
then the Riemannian exponential map at $p$ takes the vector $V$ to the endpoint $q$ of the geodesic
arc on $S^3$ which starts at $p$ tangent to $V$ and has the same length as $V$. We write $q = \Exp_p V$.

If $p$ and $q$ are points on $S^3$ which are not antipodal, then the \emph{Riemannian logarithm} at $p$ takes $q$ to the tangent vector $V$ at $p$ for which $q = \Exp_p V$. We write $\Log_p(q) = V$.

Next we allow $p$ and $V$ to vary as much as possible.
Let $TS^3$ denote the tangent bundle of $S^3$, namely
\begin{equation}
TS^3 = \{ (p , V) \,:\, p \in S^3 ~\text{and}~ V \in T_pS^3 \}.
\end{equation}
Then the Riemannian exponential map $\Exp$ will take an open subset of $TS^3$ to an open subset of $S^3 \times S^3$ as follows:
\begin{equation}
\Exp \colon \{ (p , V) : \|V\| < \pi \} \rightarrow \{ (p, q) : q \neq -p \},
\end{equation}
defined by $\Exp(p, V) = \big(p, \, \Exp_p(V)\big)$.

Smooth dependence of geodesics on initial conditions tells us that this version of the Riemannian exponential map is a diffeomorphism from a fat tubular neighborhood of the zero section in $TS^3$ to almost all of $S^3 \times S^3$, and its inverse is the Riemannian logarithm.
\begin{prop}
The rescaled vector field
\begin{equation}
W = t \dfrac{V^{\|}}{\| V^{\|} \|} + m \dfrac{V^{\perp}}{\| V^{\perp} \|}
\end{equation}
depends smoothly on the vector field $V$ in a neighborhood of the zero field in $\vf$. 
\end{prop}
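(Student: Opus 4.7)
The plan is to bypass the apparent singularities in the explicit formula by reinterpreting $W$ as the composition $\aexp^{-1}\circ\Exp$ at the level of tangent vectors, and then promoting this fiberwise smoothness to smoothness of $V\mapsto W$ between Fr\'echet spaces. Concretely, for each $p\in S^3$ and each sufficiently small $v\in T_pS^3$, I would define $W(p,v)$ to be the unique small vector in $T_pS^3$ satisfying $\aexp_p W(p,v)=\Exp_p v$. Unpacking the construction of $\aexp$ from the description of \autoref{fig:fig5}, this $W(p,v)$ has vertical component of signed length $t$ along $V^{\|}(p)$ and horizontal component of length $m$ along $V^{\perp}(p)$, so it agrees with the formula in the statement wherever that formula is literally defined. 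The proposition therefore reduces to showing that $(p,v)\mapsto W(p,v)$ is smooth on a neighborhood of the zero section in $TS^3$, plus the standard passage from fiberwise smoothness to smoothness of the induced map on sections.

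First I would verify that $\aexp$, viewed as the map from a neighborhood of the zero section in $TS^3$ to $S^3\times S^3$ sending $(p,v)$ to $(p,\aexp_p v)$, is smooth and has invertible differential at the zero section. Smoothness is built in: the splitting $v=v^{\|}+v^{\perp}$ is smooth in $(p,v)$, the time-$\|v^{\|}\|$ flow of $A$ is smooth by ODE theory, lifting $v^{\perp}$ horizontally to the new basepoint is smooth because $p_*$ is a smooth fiberwise linear isomorphism between the horizontal distribution and $TS^2$, and the final Riemannian exponential step is smooth. At $v=0$ all of the intermediate steps collapse, so $D\aexp_{(p,0)}$ on the $T_pS^3$ fiber is the identity, exactly as for $\Exp$.

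Next I would invoke the inverse function theorem to conclude that $(p,v)\mapsto(p,\aexp_p v)$ is a smooth local diffeomorphism near the zero section; composing its smooth local inverse with the smooth map $(p,v)\mapsto(p,\Exp_p v)$ yields a smooth map $(p,v)\mapsto(p,W(p,v))$ on an open neighborhood $U$ of the zero section in $TS^3$. Pointwise smoothness then upgrades to Fr\'echet smoothness by the standard fact (see~\cite{Ham,KM}) that post-composition with a fiberwise smooth bundle map $U\to TS^3$ induces a smooth map on the Fr\'echet space of smooth sections. This gives smoothness of $V\mapsto W$ on the open neighborhood of the zero vector field consisting of those $V$ whose graph lies in $U$, which is what the proposition asserts.

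The main obstacle is conceptual rather than technical: the formula $W=t\,V^{\|}/\|V^{\|}\|+m\,V^{\perp}/\|V^{\perp}\|$ has singular-looking denominators exactly where the numerators vanish, and one could try to verify cancellation by Hadamard-type expansions of $t$ in the vertical coordinate of $v$ and of $m$ in the horizontal coordinates of $v$. Working coordinate-free through $\aexp^{-1}\circ\Exp$ lets the inverse function theorem do the cancellation automatically, bypassing any case analysis on whether $V^{\|}$ or $V^{\perp}$ vanishes and yielding a single smooth extension defined on all of $U$.
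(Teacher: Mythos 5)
Your overall strategy (replace the singular-looking formula by a coordinate-free geometric description and let smooth dependence on initial conditions absorb the apparent division by zero) is in the right spirit, but the central identification on which your proof rests is incorrect: the map $(p,v)\mapsto \aexp_p^{-1}(\Exp_p v)$ is not the rescaling $V\mapsto W$ of the proposition; it is the composite $\mathrm{Twist}\circ\mathrm{Rescale}$, i.e.\ the paper's vector field $X$, not $W$. The discrepancy is in the horizontal component. In the construction of $\aexp$, the horizontal vector $X^{\perp}(p)$ is carried up the fiber to $s$ by the transport that keeps its projection to $S^2$ fixed, whereas $V^{\perp}(p)$ is the horizontal part of $\Log_p(q)$, which is tangent to the totally geodesic ``left page'' of the open book in \autoref{fig:fig5} and hence is (after normalization) the \emph{parallel} transport of $\Log_s(q)$ back to $p$. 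These two transports along the fiber differ by a rotation through the vertical rise $t$ --- that is exactly why the paper inserts the Twist map between Rescale and $\aexp$. So the vector $\aexp_p^{-1}(\Exp_p v)$ has horizontal component of length $m$ pointing in the \emph{twisted} direction, not along $V^{\perp}(p)$, and your claim that it ``agrees with the formula in the statement wherever that formula is literally defined'' fails whenever $t\neq 0$ and $V^{\perp}\neq 0$. As written, your argument establishes smoothness of $V\mapsto X$ rather than of $V\mapsto W$.

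The gap is repairable. Either (a) post-compose your smooth map with the untwisting rotation through $-t$ about the fiber direction, noting that $t$ is the signed length of the vertical component of the output you have already constructed and so depends smoothly on it, whence the composite $V\mapsto W$ is again smooth; or (b) follow the more direct route of writing $W=\Log_p(s)+\mathcal{P}\bigl(\Log_s(q)\bigr)$, where $s$ is the foot of the minimizing geodesic from $q=\Exp_p V(p)$ to the fiber $\HH_p$ and $\mathcal{P}$ is parallel transport along $\HH_p$ from $s$ back to $p$; smoothness of $q$, of the nearest-neighbor point $s$, of $\Log$, and of parallel transport then gives the result at once --- this is the paper's argument. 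Your remaining machinery (smoothness of $\aexp$ near the zero section of $TS^3$, invertibility of its fiber derivative there via the inverse function theorem, and the passage from a fiberwise smooth bundle map to a smooth map of Fr\'echet spaces of sections) is sound and would be reusable in either repair.
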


\begin{proof}
We need to rewrite these two summands of $W$ so that they make sense even when $V^{\|}$ or $V^{\perp}$ is zero, and do this as follows. For the first summand, we have 
\begin{equation}
t \dfrac{V^{\|}}{\| V^{\|} \|} = \Log_p(s),
\end{equation}
where $s$ is shown in  \autoref{fig:fig5} and defined right afterwards, where it is noted that $s$ depends smoothly on $p$ and $V$. For the second summand, we have
\begin{equation}
m \dfrac{V^{\perp}}{\| V^{\perp} \|} = \mathcal{P}\big(\Log_s(q)\big),
\end{equation}
where $\mathcal{P}$ indicates parallel transport of the tangent vector $\Log_s(q)$ to $S^3$ at $s$ down along the geodesic arc of the Hopf fiber $\HH_p$ from $s$ back to $p$.
So we can write
\begin{equation}
W = \Log_p(s) + \mathcal{P}\big(\Log_s(q)\big),
\end{equation}
which lets us see that $W$ depends smoothly on the vector field $V$ in a neighborhood of the zero field in $\vf$.
\end{proof}

\begin{prop}
The rescaling map $V \rightarrow W$ is a diffeomorphism between neighborhoods of the zero vector field in $\vf$.
\end{prop}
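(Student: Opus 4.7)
The plan is to exploit the fact that the rescaling $V \mapsto W$ is a \emph{pointwise} operation on the tangent bundle, reduce the question to a family of finite-dimensional inverse function theorem problems parametrized by $p \in S^3$, and then use compactness of $S^3$ to promote the fiberwise inverses to a smooth map between neighborhoods in $\vf$.

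First I would make explicit that $W(p)$ depends only on $p$ and the single tangent vector $V(p) \in T_p S^3$: the decomposition $V(p) = V^{\|}(p) + V^{\perp}(p)$ is pointwise, and both the vertical rise $t$ and the horizontal run $m$ are functions of $p$ and of $q = \Exp_p V(p)$, which in turn depends only on $p$ and $V(p)$. Thus the rescaling is induced by a smooth fiber-preserving map $R \colon \mathcal{O} \to TS^3$ on an open neighborhood $\mathcal{O}$ of the zero section, with $R_p := R\vert_{T_p S^3}$.

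Next I would compute the derivative $dR_p\vert_0$. To first order, $\Exp_p V = p + V + O(\|V\|^2)$, so $t = \|V^{\|}\| + O(\|V\|^2)$ and $m = \|V^{\perp}\| + O(\|V\|^2)$; combining with the alternative expression $W = \Log_p(s) + \mathcal{P}(\Log_s(q))$ from the previous proposition gives $R_p(V) = V + O(\|V\|^2)$, so $dR_p\vert_0 = \mathrm{id}_{T_p S^3}$. The ordinary finite-dimensional inverse function theorem then produces, for each $p$, a neighborhood of $0$ in $T_p S^3$ on which $R_p$ is a smooth diffeomorphism onto its image.

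By compactness of $S^3$ and smooth dependence of $R$ on $p$, the inverse function theorem with parameters yields a single $\rho > 0$ such that $R_p$ restricts to a diffeomorphism from the ball $B_\rho \subset T_p S^3$ onto an open neighborhood of $0$ in $T_p S^3$, with the inverse $R_p^{-1}$ depending smoothly on $p$. Let $\mathcal{U} \subset \vf$ be the set of smooth vector fields $V$ with $\|V(p)\| < \rho$ for all $p$; this is an open neighborhood of the zero field in the $C^\infty$ topology. The pointwise formula $V \mapsto (p \mapsto R_p^{-1}(V(p)))$ defines a map $\mathcal{U} \to \vf$, which is smooth in the Fréchet sense because a smooth fiberwise diffeomorphism of a tangent bundle induces a smooth map between the corresponding spaces of sections (a standard fact about nonlinear bundle operations, cf.\ \cite{Ham,KM}). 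Combined with smoothness of $V \mapsto W$ established in the preceding proposition, this provides the required two-sided smooth inverse.

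The main obstacle is the uniformity step: one must verify that a uniform inverse radius $\rho$ exists and that the pointwise inverse operation is genuinely smooth between the Fréchet spaces of sections, rather than merely continuous. Both are standard once the fiberwise diffeomorphism $R$ is established to be smooth on a neighborhood of the zero section, so the real content of the proof lies in identifying $R$ as such a fiberwise map and computing its differential at $0$.
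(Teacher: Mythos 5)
Your proof is correct, but it takes a genuinely different route from the paper's. The paper never invokes an inverse function theorem: it writes down the inverse of $V \mapsto W$ explicitly, step by step, using the identity $W = \Log_p(s) + \mathcal{P}\bigl(\Log_s(q)\bigr)$. Since $\Log_p(s)$ is tangent to the Hopf fiber $\HH_p$ and $\mathcal{P}\bigl(\Log_s(q)\bigr)$ is orthogonal to it, one recovers $W^{\|} = \langle W, A\rangle A$ and $W^{\perp} = W - W^{\|}$, then $s = \Exp_p(W^{\|})$, then $\Log_s(q)$ by parallel transport back up the fiber, then $q$, and finally $V = \Log_p(q)$; each step is visibly smooth, so the inverse is smooth by construction. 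You instead observe that the rescaling is a fiber-preserving map $R$ of $TS^3$ near the zero section, compute $dR_p|_0 = \mathrm{id}$, and combine the fiberwise inverse function theorem (with uniformity from compactness of $S^3$) with the standard fact that smooth fiberwise bundle maps induce smooth maps on spaces of sections. Your argument is more conceptual and more general -- it explains \emph{why} the map is invertible by identifying its linearization, and would apply to any fiberwise perturbation of the identity -- but it leans on two pieces of machinery (uniform IFT with parameters, and smoothness of nonlinear composition operators in the Fr\'echet category) that the paper's explicit inversion sidesteps entirely. One small point to tighten in your version: after obtaining the uniform radius $\rho$, the inverse is defined on $R_p(B_\rho)$, which need not contain $B_\rho$, so you should shrink once more to a uniform $\rho' > 0$ with $B_{\rho'} \subset R_p(B_\rho)$ for all $p$ before declaring the domain $\mathcal{U}$ of the inverse. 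With that adjustment both proofs are complete and establish the same statement.
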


\begin{proof}
To show this, we will smoothly invert the map $V \rightarrow W$ by carrying out in order the following list of
actions, each depending in a smooth way on those listed earlier.

We begin with $W = \Log_p(s) + \mathcal{P}\big(\Log_s(q)\big)$ and will smoothly split it into its summands
\begin{equation}
W^{\|} = \Log_p(s) \qquad \text{and} \qquad W^{\perp} = \mathcal{P}\big(\Log_s(q)\big)
\end{equation}
which are parallel to and orthogonal to the Hopf fibers. To do this, we note that
\begin{equation}
W^{\|} = \Log_p(s) = \langle W, A \rangle A
\end{equation}
where $A(p) = pi$ (quaternion multiplication) is the left-invariant unit vector field on $S^3$ tangent to the Hopf fibers. This extracts $W^{\|}$  smoothly from $W$, and then $W^{\perp} = W - W^{\|}$.

From $W^{\|}$ we get
\begin{equation}
\Exp_p\big(W^{\|}(p)\big) = \Exp_p \circ \Log_p(s) = s.
\end{equation}

Knowing $s$, we parallel transport $W^{\perp} = \mathcal{P}\big(\Log_s(q)\big)$ up along the geodesic arc of the Hopf fiber $\HH_p$ from $p$ to $s$ and recover $\Log_s(q)$.

From $\Log_s(q)$ we get $\Exp_s \circ \Log_s(q) = q$ to recover $q$.

From $q$ we get $\Log_p(q) = V(p)$ to get $V$. This shows how to invert the rescaling map $V \rightarrow W$ to get $W \rightarrow V$, and we conclude that these maps are diffeomorphisms between neighborhoods of the zero field in $\vf$.
\end{proof}

We noted earlier that the vertical rise $t = t(V, p)$ depends smoothly on the vector field ${V \in \vf}$ and the point $p \in S^3$.

Next, write $W$ as a linear combination of the orthonormal basis of left-invariant vector fields $A(p) = pi$, $B(p) = pj$ and $C(p) = pk$:
\begin{equation}
W(p) = f(p)A(p) + g(p)B(p) + h(p)C(p)
\end{equation}
with $C^{\infty}$ coefficients $f$, $g$ and $h$. We will express the twist of $W$ through the angle $\varphi = t(V, p)$ in terms of this decomposition as follows.

 \autoref{fig:fig5} shows the Hopf fiber $\HH_q$ coiling left-handedly around the Hopf fiber $\HH_p$, so we will twist the vector $W(p)$ right-handedly through an angle $\varphi = t(V, p)$ to bring it from the left page to the right page.
Since $A(p)$, $B(p)$, $C(p)$ is a right-handed frame at $p$, we have
\begin{equation}
X(p) = f(p)A(p) + \Big(\!\cos(t) g(p) - \sin(t)h(p)\Big) B(p) + \Big(\sin(t) g(p) + \cos(t) h(p)\Big) C(p),
\end{equation}
where we have shortened $t(V, p)$ to $t$ above for the sake of clarity. This shows that $X$ is a smooth vector field on $S^3$ and that the operation $\mathrm{Twist} \circ \mathrm{Rescale}$,
taking $V \rightarrow W \rightarrow X$, is a smooth map between neighborhoods of the zero vector field in $\vf$.

To see why $\mathrm{Twist} : W \rightarrow X$ is a diffeomorphism between neighborhoods of the zero field in $\vf$, note that 
the vertical rise $t(p)$ is the length of the vertical component of $W$, and hence also the length of the vertical component of $X$, and so depends smoothly on $X$.
Hence untwisting the vector $X(p)$ through the angle $\varphi = t(p)$ about the Hopf fiber $\HH_p$ through $p$ smoothly returns it to the vector field $W$.

Thus $\mathrm{Twist} \colon W \rightarrow X$ is a diffeomorphism between neighborhoods of the zero field in $\vf$.

Since both $\mathrm{Rescale}$ and $\mathrm{Twist}$ are diffeomorphisms between neighborhoods of the zero field in $\vf$, the same is true of their composition
$\mathrm{Twist} \circ \mathrm{Rescale}$. Therefore both
$$\aexp \qquad \text{and} \qquad \Exp = \aexp \circ \mathrm{Twist} \circ \mathrm{Rescale}$$
define the same smooth Fr\'echet manifold structure on $\diffst$. This proves \autoref{prop_aexp_exp}.

Suppose now that the vector field $X$ on $S^3$ is ``aligned'' with the projection of the Hopf bundle in the sense that it projects to a well-defined vector field on $S^2$, equivalently, that $X \in \taut$. Then the diffeomorphism $\aexp(X)$ is an automorphism of the Hopf fibration, and not otherwise.

So $\aexp$ takes a neighborhood of the zero vector field in 
\begin{equation}
\vf = \taut \oplus \taut^{\perp}
\end{equation}
to a $\diffst$ neighborhood of the identity in $\auth$, with the vectors in $\taut \oplus 0$ going to automorphisms in $\auth$. This proves \autoref{prop_aexp_neighborhood}.

Composition with automorphisms of $\HH$ then moves these nice coordinates to neighborhoods of all the elements of $\auth$.
This shows, according to Hamilton's definition of Fr\'echet submanifold, stated earlier, that $\auth$ is a smooth Fr\'echet submanifold of $\diffst$.

This proves  \autoref{submanifold_thm}.

\newpage

\section{A smooth slice for the sequence $\Aut(H) \subset \Diff(S^3)\to \Fib(S^3)$}
\label{slice_section}
Our goal here is to produce the most natural smooth slice possible, with image on the local Fréchet submanifold of $\diffst$ consisting of balanced diffeomorphisms, because this is the Riemannian exponential image of the $L^2$-orthogonal complement to the tangent space at the identity of the Fréchet subgroup of automorphisms of the Hopf fibration.

\subsection{Hunting for a smooth slice of balanced diffeomorphisms}

In  \autoref{fig:fig7} below, we start with a coset $f \, \auth$ which is close to $\auth$, and for convenience first modify the representative $f$ so that it is close to the identity and moves points horizontally. Then we will search for and find a unique balanced diffeomorphism in this same coset.

\begin{figure}[H]
    \centering
    \includegraphics[scale=0.7]{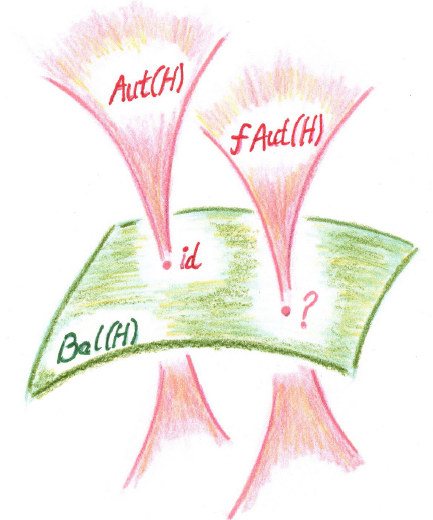}
    \caption{\centering We must show that the left coset $f\,\aut$ \\ contains a unique balanced diffeomorphism of $S^3$.}
     \label{fig:fig7}
\end{figure}

This will prove the following result.

\begin{theorem}[Smooth slice theorem for $S^3$] 
The exact sequence
\begin{equation}
\auth \subset \diffst \to \mathrm{Fib}(S^3)
\end{equation}
admits a smooth slice consisting of balanced diffeomorphisms, and is therefore a smooth fiber bundle in the Fréchet category.
\end{theorem}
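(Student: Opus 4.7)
The plan is to construct, for each diffeomorphism $g$ sufficiently close to $\mathrm{id}$ in $\diffst$, a unique balanced diffeomorphism in the left coset $g\auth$, depending smoothly on $g$. Translating by elements of $\diffst$ then produces smooth slices through every other coset, and the existence of such slices locally trivializes the projection $\diffst \to \Fib(S^3)$ as a principal $\auth$-bundle in the Fr\'echet category.

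I would formulate the local construction as the assertion that the multiplication map
\[
\mu \colon \bal \times \auth \;\longrightarrow\; \diffst, \qquad \mu(b,\phi) = b\phi,
\]
is a local diffeomorphism at $(\mathrm{id},\mathrm{id})$. Smoothness of $\mu$ is immediate from $\diffst$ being a Fr\'echet Lie group together with $\auth$ and a neighborhood of $\mathrm{id}$ in $\bal$ being smooth Fr\'echet submanifolds (the former by \autoref{submanifold_thm}, the latter as the Riemannian exponential image of the Fr\'echet space $\mathrm{BVF}(S^3)$). The derivative of $\mu$ at $(\mathrm{id},\mathrm{id})$ is the addition map $\mathrm{BVF}(S^3)\oplus \mathrm{PVF}(S^3) \to \mathrm{VF}(S^3)$, which \autoref{prop_l2_direct_sum} identifies as a topological linear isomorphism.

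The main obstacle is that the inverse function theorem is not generally available in the Fr\'echet category without tame estimates, so I would invert $\mu$ constructively via the convex optimization argument foreshadowed in the introduction. Following the strategy outlined under \autoref{fig:fig7}, given $g$ close to $\mathrm{id}$ I would search for $\phi \in \auth$ near $\mathrm{id}$ making $g\phi$ balanced by minimizing over $\phi$ the squared $L^2$ norm of the $\taut$-projection of the log vector field $\Log(g\phi)$, where the projection is the one from \autoref{prop_l2_direct_sum}. At $g=\mathrm{id}$ this functional is minimized uniquely at $\phi=\mathrm{id}$ with value zero, and its Hessian there is, up to identification via the Riemannian exponential, the identity on $\mathrm{PVF}(S^3)$ in the $L^2$ inner product, hence strictly positive. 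For $g$ close enough to $\mathrm{id}$, the Hessian remains strictly positive and the energy strictly convex near $\phi=\mathrm{id}$, yielding a unique minimizer $\phi(g)$. The critical point equation at this minimizer forces $\Log(g\phi(g))$ to lie in $\mathrm{BVF}(S^3) = T_{\mathrm{id}}\auth^\perp$, so $g\phi(g) \in \bal$; smooth dependence of $\Log$ on its inputs transfers to smooth dependence of $\phi(g)$ on $g$.

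Combining these ingredients produces a smooth local inverse to $\mu$, and hence a smooth slice $s$ from a neighborhood of $[\mathrm{id}] \in \Fib(S^3)$ into $\bal \subset \diffst$. Left translation by $\diffst$ propagates the slice to a neighborhood of every other coset, and the standard local-trivialization argument $(u,\phi)\mapsto s(u)\phi$ upgrades the existence of smooth local slices into a smooth principal $\auth$-bundle structure on $\diffst \to \Fib(S^3)$, completing the proof.
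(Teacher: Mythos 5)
Your overall strategy -- find the unique balanced representative of each coset near $\auth$ by minimizing a convex energy, then translate -- is the same one the paper uses, and your framing via the multiplication map $\mu\colon\bal\times\auth\to\diffst$ with derivative $\mathrm{BVF}(S^3)\oplus\mathrm{PVF}(S^3)\to\mathrm{VF}(S^3)$ is a clean statement of what must be proved. You also correctly identify the central difficulty: the inverse function theorem is unavailable in the Fr\'echet category. But the optimization you substitute for it does not close that gap. You minimize $\phi\mapsto\bigl\|P\bigl(\Log(g\phi)\bigr)\bigr\|_{L^2}^2$ over the \emph{infinite-dimensional} group $\auth$, with respect to the $L^2$ inner product -- which, as the paper itself notes, is not complete on spaces of smooth vector fields. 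In that setting, strict convexity and a positive-definite Hessian at one point do not give existence of a minimizer (a minimizing sequence need only converge in $L^2$, to something that need not be smooth, or even lie in $\auth$), do not give uniqueness on a neighborhood, and do not give smooth dependence of the minimizer on $g$ -- that last step is itself an implicit-function-theorem statement in Fr\'echet spaces, exactly the tool you set out to avoid. There is also a gap in the final step: a critical point of $\|P(\cdot)\|^2$ forces $P(\Log(g\phi))=0$ only if the minimum value is zero, or if the linearized map $\delta\phi\mapsto P\bigl(D\Log\cdot\delta\phi\bigr)$ surjects onto $\taut$ for all $g$ near the identity; neither is established, and the former is essentially the existence claim you are trying to prove.

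The paper's key idea, which your proposal is missing, is a reduction to \emph{finite} dimensions: it fixes a target fiber $f(F_0)$ and lets nearby Hopf fibers $F$, parametrized by points $(x,y)$ of $S^2$, ``shoot'' horizontal fields $V$ at it, minimizing the energy $E=\frac{1}{2\pi}\int_F\frac{1}{2}\|V(t)\|^2\,dt$ as a function on an open subset of $S^2$. The gradient is computed explicitly to be $-\overline{V}$, so the critical point condition is precisely balancedness of the shooting field along that fiber; a holonomy estimate shows $\nabla_U\overline{V}\sim -U$, making the Hessian positive definite, so strong convexity of a function of two variables gives existence and uniqueness of the winning fiber, and the ordinary finite-dimensional implicit function theorem gives smooth dependence on the source fiber $F_0$ (and on $f$). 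These fiberwise solutions are then assembled into a single balanced diffeomorphism $\Exp V$ in the coset. To repair your argument you would need either this fiberwise decoupling, or a Nash--Moser-type tame inverse function theorem with accompanying estimates -- neither of which your proposal supplies.
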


\newpage

In a similar manner to the previous section, we rely on \autoref{fig:fig8} to guide us, and follow it by a list of definitions used in this section.
\begin{figure}[H]
    \centering
    \includegraphics[scale=0.5]{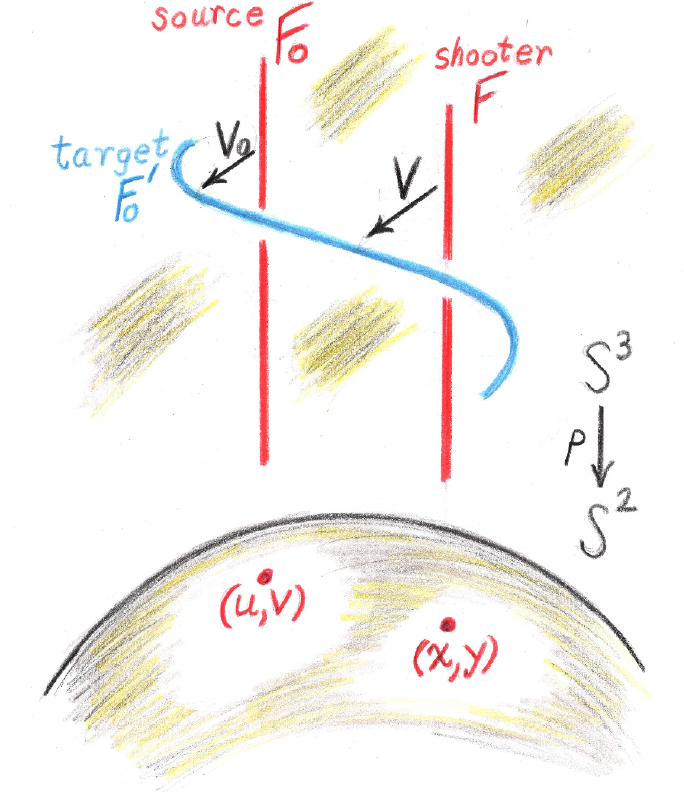}
    \caption{\centering Nearby fibers $F$ shoot horizontal vector fields $V$ to hit the image \\ blue fiber $F_0'$. The winning fiber will be the one whose vector field $V$ is balanced.}
    \label{fig:fig8}
\end{figure}

\noindent\textbf{Description of \autoref{fig:fig8}}. Let $f \auth$ be a given left coset of $\auth$, near $\auth$ itself. We use the following notation in the picture:
\begin{itemize}
     \item $F_0$ = a chosen ``source'' Hopf fiber, kept fixed until near the end;
     \smallskip
     \item $f(F_0) = F_0' = $ blue image fiber;
     \smallskip
     \item $F$ = nearby Hopf fiber, shooting for $F_0'$;
     \smallskip
     \item $V = V(t)$ = horizontal vector field from $F$ to $F_0'$;
     \smallskip
     \item $(u, v)$ = location on $S^2$ of the projection $p(F_0)$ of the source Hopf fiber.
     \smallskip
     \item $(x, y)$ = location on $S^2$ of the projection $p(F)$ of the shooter Hopf fiber;
\end{itemize}

Central roles in the argument to follow will be played by the \emph{average value}
\begin{equation}
\overline{V} = \frac{1}{2\pi} \int\limits_F V(t) dt
\end{equation} 
of the vector field $V$ along the Hopf fiber $F$, and by the \emph{energy}
\begin{equation}
E(V \text{ on } F) = \frac{1}{2\pi} \int\limits_F \frac{1}{2} \|V(t)\|^2 dt
\end{equation} 
of this vector field.

The energy $E$ is a function of the four variables $(u, v, x, y)$, or equivalently a function of the location of the shooter Hopf fiber $(x, y)$ depending smoothly on parameters $(u, v)$ indicating the location of the source fiber.

To begin, we focus on a single Hopf fiber $F_0$ and keep it fixed for a while. Its image $f(F_0) = F_0'$ is the blue image fiber that is the target of our hunt. Each nearby Hopf fiber $F$ shoots for this target with a horizontal vector field $V$, meaning that $\Exp_F V = F_0'$, and wins the hunt if the average value $\overline{V}$ of the shooting vector field $V$ along the fiber $F$ is zero. We need to ensure not just existence and uniqueness of the winning Hopf fiber, but also smooth dependence on initial conditions. More precisely, we need that the location $(x, y)$ of the winning shooter fiber $F$ depends smoothly on the location $(u, v)$ of the source fiber $F_0$ (see  \autoref{fig:fig8}). 

\begin{theorem}\label{unique_min_prop}
There exists a unique minimum point $m$ of the energy function $E$ on an open subset of $S^2$, which varies smoothly with the choice of initial Hopf fiber $F_0$.
\end{theorem}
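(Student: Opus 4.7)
My plan is to apply the implicit function theorem to the critical point equation $\nabla_{(x,y)} E = 0$, viewing $E = E(x,y,u,v,f)$ as a smooth real-valued function of the shooter coordinates $(x,y) \in S^2$, the source coordinates $(u,v) \in S^2$, and a coset representative $f$ near the identity in $\diffst$. Smoothness of $E$ in all of these variables follows from smooth dependence of the horizontal minimizing vector field $V$ on the source fiber $F$ and the target fiber $F_0' = f(F_0)$, which is in turn a consequence of the smoothness of the Riemannian logarithm on $S^3$, of the Hopf projection $p$, and of $f$ itself. The strategy is to identify an obvious minimum when $f = \mathrm{id}$, verify that its Hessian is nondegenerate, and then promote this to a smooth family of unique minima for $f$ close to the identity via the implicit function theorem.

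Next I would analyze the degenerate base case $f = \mathrm{id}$. Here $F_0' = F_0$, so at the shooter position $(x,y) = (u,v)$ the vector field $V$ is identically zero and $E = 0$. Since $E \geq 0$ and the minimizing horizontal chords between distinct nearby Hopf fibers have strictly positive pointwise length, this is a strict local minimum, and $\overline{V} = 0$ holds trivially. I would then compute the Hessian of $(x,y) \mapsto E(x,y,u,v,\mathrm{id})$ at this critical point. Because the critical value of $V$ vanishes, the second-order expansion of $E$ in the $S^2$-displacement $(x-u,y-v)$ reduces to the average over $F$ of the squared length of the horizontal lift of the chord on $S^2$ from $p(F)$ to $p(F_0)$, modulated by how that lift rotates along $F$. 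Using the rotational symmetry of the Hopf fibration about $F_0$ together with the fact that horizontal lifts scale lengths by $1/2$, this Hessian comes out to be a positive multiple of the round metric on $T_{(u,v)} S^2$, hence positive definite.

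With the partial Jacobian of $\nabla_{(x,y)} E$ in $(x,y)$ invertible at the base point $((u,v),(u,v),\mathrm{id})$, the implicit function theorem yields a unique smooth solution $(x,y) = m(u,v,f)$ on a neighborhood. Continuity of the Hessian keeps it positive definite throughout this neighborhood, so $m$ is a strict local minimum, unique on an open subset of $S^2$, and depends smoothly on $(u,v)$ and on $f$. I expect the main obstacle to be the Hessian computation: one must verify by a second-variation argument — or by exploiting the $S^1$-action rotating Hopf fibers within the base — that the quadratic form is genuinely positive definite rather than merely semidefinite along some degenerate direction coming from the twist along $F$. Once that rigidity is in hand, the remainder is a routine application of the implicit function theorem and continuity.
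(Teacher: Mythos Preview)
Your approach is correct but follows a genuinely different route from the paper. You perturb from the trivial case $f = \mathrm{id}$, where the minimum is obvious and the Hessian computation reduces to the second variation of squared distance between parallel Hopf fibers; the paper instead establishes positive-definiteness of the Hessian directly for an arbitrary $f$ near the identity via two geometric lemmas: first that $\nabla E = -\overline{V}$ (the gradient of the energy is minus the balance field), and second that $\nabla_U \overline{V} \approx -U$ (proved through a careful analysis of holonomy in the Hopf bundle). Your route is more elementary and bypasses the holonomy discussion entirely. What the paper's route buys is the gradient identity $\nabla E = -\overline{V}$ itself: this is precisely what identifies the critical point of $E$ with the shooter whose field is balanced ($\overline{V}=0$), which is the content actually needed downstream for the slice theorem. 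Your argument locates a smoothly varying minimum of $E$ but does not by itself certify that the winning shooter is balanced, so you would still owe that gradient lemma separately before the application.

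Two minor cautions. First, your worry about a possible degenerate direction ``coming from the twist along $F$'' is unnecessary in the base case: for $f=\mathrm{id}$ the length $\|V(t)\|$ is constant in $t$ (it is just the distance between the parallel great circles $F$ and $F_0$), so $E$ equals $\tfrac12\, d(F,F_0)^2$ and its Hessian in $(x,y)$ is a positive multiple of the round metric with no subtlety. Second, you invoke the implicit function theorem with $f\in\diffst$ as a parameter, which in the Fr\'echet setting requires care. The paper avoids this by fixing $f$ and applying only the finite-dimensional IFT in the $(x,y;u,v)$ variables, having already secured positive-definiteness of the Hessian for that fixed $f$. You can do the same: note that your base-case Hessian is positive definite on the whole open set (not merely at the single critical point), use $C^2$ continuity in $f$ to retain strong convexity for a fixed nearby $f$, obtain the unique minimum from convexity, and only then invoke the finite-dimensional IFT for smooth dependence on $(u,v)$.
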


The proof of our Smooth Slice Theorem for $S^3$ will follow quickly from  \autoref{unique_min_prop}. Note that even when a family of smooth functions depending smoothly on parameters each has a unique minimum, the location of these minima may fail to depend smoothly on parameters. A simple example of this is the family of functions $f_t(x) = x^4/4 - t^2x$, with $t$ as parameter, because the unique minimum occurs at $x = t^{2/3}$. In our attempt to prove the smooth slice theorem, this is precisely the kind of problem we encounter. We will deal with it by introducing some convexity into the problem. 

The key ingredient in the proof of \autoref{unique_min_prop} will be  \autoref{hessian_prop} which establishes strong convexity of the energy function $E$. Existence and uniqueness of the winning Hopf fiber follow from this immediately, and in combination with the Implicit Function Theorem, we will also deduce smooth dependence on the initial conditions.

Recall that \emph{strongly convex} means that a function $f$ is convex thanks to some strict derivative inequality, which for us will be that its Hessian is positive definite. \emph{Strictly convex} means that the interior of the line segment (geodesic) connecting two points on or above the graph of $f$ lies strictly above the graph. Strongly convex implies strictly convex, but not vice versa: for example, the function $f(x) = x^4$ is strictly convex, but not strongly convex.

\begin{proposition}\label{hessian_prop}
The Hessian of our energy function $E$ is positive definite, and hence the energy function $E$ is strongly convex for each choice of initial Hopf fiber $F_0$.
\end{proposition}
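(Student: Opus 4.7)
The plan is to identify $E$ as a small smooth perturbation of a very explicit convex model energy—half the squared distance on the base two-sphere—and then to promote positive definiteness from the model to the actual $E$ by a continuity argument. I will first carry out the computation assuming $f=\mathrm{id}$, invoking Clifford parallelism of the Hopf fibers to pin down the model; then I will use smooth dependence on $f$ to perturb to the general case.

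\emph{Baseline case.} When $f$ is the identity, the target fiber $F_0'=F_0$ is itself a Hopf fiber, and any nearby shooter fiber $F$ is a Clifford parallel of $F_0$: the two great circles are equidistant curves in $S^3$. Consequently the horizontal vector field $V(t)$ along $F$ pointing to the nearest point of $F_0$ has pointwise-constant norm, and that common norm is exactly $d_{S^2}\bigl((x,y),(u,v)\bigr)$ once the base two-sphere carries the canonical radius-$1/2$ metric under which the Hopf projection is a horizontal-length-preserving Riemannian submersion (as already noted in the paper). Hence
\begin{equation*}
E_{\mathrm{id}}(x,y;u,v) \;=\; \tfrac{1}{2\pi}\int_0^{2\pi}\tfrac{1}{2}\|V(t)\|^2\,dt \;=\; \tfrac{1}{2}\, d_{S^2}\bigl((x,y),(u,v)\bigr)^2 .
\end{equation*}
A standard Jacobi-field computation on a round two-sphere shows that the Hessian of $q\mapsto \tfrac{1}{2}d(q,q_0)^2$ is positive definite throughout the open geodesic disk of radius less than the injectivity radius (here $\pi/2$). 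Thus $E_{\mathrm{id}}(\,\cdot\,;u,v)$ has positive-definite Hessian on an open neighborhood $U$ of the minimizer $(x,y)=(u,v)$.

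\emph{Perturbation to general $f$.} For $f$ close to the identity, the image fiber $F_0'=f(F_0)$ is a $C^\infty$-small perturbation of $F_0$, and the shooter vector field $V=V(t;x,y,u,v,f)$ is uniquely determined by the requirement $\mathrm{Exp}_FV=F_0'$ with the nearest-point parametrization; one reads it off smoothly via the Riemannian logarithm inside a fixed tubular neighborhood of $F_0$. Consequently $E$ depends smoothly on all of its arguments, and on any compact sub-disk of $U$ the Hessian of $E(\,\cdot\,;u,v,f)$ is $C^0$-close to that of $E_{\mathrm{id}}(\,\cdot\,;u,v)$. Since positive definiteness is an open condition on symmetric $2\times 2$ matrices, after possibly shrinking $U$ the Hessian of $E$ itself remains positive definite on $U$, giving strong convexity of $E$ for each choice of initial Hopf fiber $F_0$. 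Smooth dependence on the parameters $(u,v)$ is built in at the same time, which is what will be needed in the ensuing application of the Implicit Function Theorem in the proof of \autoref{unique_min_prop}.

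\emph{Main obstacle.} The principal technical point is the Clifford-parallel identification in the baseline: one needs that the horizontal minimizing field between two nearby Hopf fibers has pointwise-constant length equal to $d_{S^2}$ of their projections. Once this is in hand, the Hessian computation reduces to the well-understood squared-distance function on a round sphere, and the passage to the genuine $E$ is a routine perturbation. A minor book-keeping check—that the parametrization convention by which $V$ is uniquely defined is compatible with the normalization of the integral defining $E$—is required but not serious.
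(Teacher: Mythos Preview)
Your proof is correct and takes a genuinely different route from the paper's. The paper proceeds by establishing two intermediate lemmas: first that $\nabla E = -\overline{V}$ (the negative of the balance field), and second that the covariant derivative $\nabla_U\overline{V}$ can be made $C^0$-close to $-U$ on small open subsets of $S^2$; combining these gives $(\mathrm{Hess}\,E)(U,W) \approx \langle U,W\rangle$ directly. Your approach instead reduces the problem to the well-known convexity of $\tfrac12 d^2$ on a round sphere via the Clifford-parallel baseline $f=\mathrm{id}$, and then perturbs. Your argument is cleaner and shorter, sidestepping the paper's rather lengthy computation of $\nabla_U\overline{V}$ (which involves a separate discussion of holonomy in the Hopf bundle). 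On the other hand, the paper's route yields the explicit gradient formula $\nabla E = -\overline{V}$ as a by-product, and this is precisely what identifies the energy minimizer with the \emph{balanced} fiber---an identification essential for the subsequent slice theorem and not supplied by your perturbation argument alone (though it is stated separately as \autoref{gradient_prop}, so nothing is lost in the overall logic).

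One small correction: on a round sphere of radius $r$ the Hessian of $q\mapsto\tfrac12 d(q,q_0)^2$ has tangential eigenvalue $(d/r)\cot(d/r)$, which stays positive only for $d < \pi r/2$, i.e.\ half the injectivity radius, not the full injectivity radius as you state. This does not affect your argument, since you only use positive definiteness on a small neighborhood of the minimizer.
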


In order to prove this proposition, we first prove two intermediate technical results, \autoref{gradient_prop} and \autoref{covariant_prop}.

\newpage

\begin{lem}\label{gradient_prop}
The gradient of the energy function is the negative of the balance field $\overline{V}$ on the base space $S^2$:
\begin{equation}
\nabla E = -\overline{V}.
\end{equation}
\end{lem}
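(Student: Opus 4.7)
My plan is to view $E$ as a function on $S^2$ (the space of shooter fiber positions) with the target fiber $F_0' = f(F_0)$ held fixed, and to compute its first variation in an arbitrary direction $w \in T_{(x,y)}S^2$. Given $w$, I would take a smooth base-path $(x(s),y(s))$ starting at $(x,y)$ with initial velocity $w$, let $F_s = p^{-1}(x(s),y(s))$ be the corresponding family of shooter fibers, and identify $F_s$ with $F = F_0$ via horizontal lifting of this base-path: at each $x(t)\in F$, let $x_s(t)\in F_s$ be the horizontal lift from $x(t)$, so that $\frac{d}{ds}\big|_{s=0}\, x_s(t) = \tilde w(t)$ is the horizontal lift of $w$ at $x(t)$.

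Next, let $V_s(t)$ be the horizontal vector field along $F_s$ with $\Exp_{F_s}V_s = F_0'$. Since $F_0'$ is held fixed while the foot point is displaced by $s\,\tilde w(t) + O(s^2)$, smooth dependence of the exponential map on its initial data (equivalently, the inverse function theorem applied to $\Exp$ at $V(t)$, which is a local diffeomorphism by the discussion in \autoref{submanifoldsection}) yields the key first-order expansion
\[
V_s(t) \;=\; V(t) \;-\; s\,\tilde w(t) \;+\; O(s^2),
\]
with the right-hand side interpreted via horizontal parallel transport along the base-path. Differentiating the energy and using this expansion gives
\[
\left.\frac{d}{ds}\right|_{s=0} E \;=\; \frac{1}{2\pi}\int_F \left\langle V(t),\, -\tilde w(t) \right\rangle dt \;=\; -\frac{1}{2\pi}\int_F \langle V(t), \tilde w(t)\rangle\, dt.
\]

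Finally, since the Hopf projection $p$ is a Riemannian submersion (after rescaling the base to radius $1/2$, as the paper allows), inner products of horizontal vectors on $S^3$ match their projections on $S^2$, giving $\langle V(t),\tilde w(t)\rangle_{S^3} = \langle p_*V(t), w\rangle_{S^2}$. Because $w$ is a fixed tangent vector at $(x,y)$ that does not depend on $t$, I can pull it out of the integral to obtain
\[
\left.\frac{d}{ds}\right|_{s=0} E \;=\; -\left\langle \frac{1}{2\pi}\int_F p_*V(t)\, dt,\, w\right\rangle \;=\; -\langle \overline V, w\rangle.
\]
Since $w$ is arbitrary, this identifies $\nabla E = -\overline V$.

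The main obstacle I expect is justifying the first-order expansion $V_s(t) = V(t) - s\,\tilde w(t) + O(s^2)$ rigorously, because the shooting vector is tied to a specific target point on $F_0'$ and that correspondence reparametrizes as the shooter sweeps past $F_0'$. I would handle this by setting up an implicit function: define $\Phi(V,\text{foot point}) = \Exp_{\text{foot}}(V)$ on the horizontal bundle over a neighborhood of $F$, observe that $D\Phi$ at $(V(t), x(t))$ is an isomorphism (since horizontal directions of motion of the foot contribute independently to the endpoint), and then differentiate implicitly the constraint $\Phi(V_s(t), x_s(t)) \in F_0'$. Keeping track of the factor of $2$ in the Hopf submersion is routine once the radius-$1/2$ convention is fixed; the remainder of the argument is a clean linear computation.
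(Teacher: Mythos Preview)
The paper's proof is much shorter than yours: working on $S^2$, it invokes the standard Riemannian identity $\nabla_p \tfrac12\, d(p,q)^2 = -\Log_p(q)$ pointwise in $t$ (so that $\tfrac12\nabla\|V(t)\|^2 = -V(t)$), then integrates over $t$ and cites Karcher. Your variational reformulation is morally the same idea, but it introduces a genuine error at the key step.

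The expansion $V_s(t) = V(t) - s\,\tilde w(t) + O(s^2)$ is false as written: it asserts $\nabla_{\tilde w}V = -\tilde w$, whereas the paper's own \autoref{covariant_prop} (proved immediately after this lemma) computes that for a fixed target on $S^2$ one has $\nabla_U V = (\alpha/\sin\alpha)(-U)$ when $U\perp V$, with $\alpha = |V|$, and in $S^3$ there are further holonomy corrections. ``Smooth dependence of the exponential map'' gives you \emph{some} first-order term, not the flat-space formula $-\tilde w$. What rescues your final answer is that this curvature correction is orthogonal to $V$: in $\tfrac{d}{ds}\tfrac12|V_s|^2 = \langle V,\nabla_s V_s\rangle$ only the component of $\nabla_s V_s$ parallel to $V$ survives, and that component really does equal the parallel part of $-\tilde w$. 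So your conclusion $\tfrac{d}{ds}E = -\langle \overline V, w\rangle$ is correct, but not for the reason you give. The clean fix is either to use the first-variation-of-geodesic-energy formula $\tfrac{d}{ds}\tfrac12|V_s|^2 = \langle V_y,\dot y\rangle - \langle V,\tilde w\rangle$ directly, or---as the paper does---to appeal to the squared-distance gradient identity, which packages exactly the piece you need without asserting anything about the full vector $\nabla_s V_s$. Your implicit-function plan for the reparametrization issue then reappears as the endpoint term $\langle V_y,\dot y\rangle$.
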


\begin{proof}
\begin{figure}[H]
    \centering
    \includegraphics[scale=0.4]{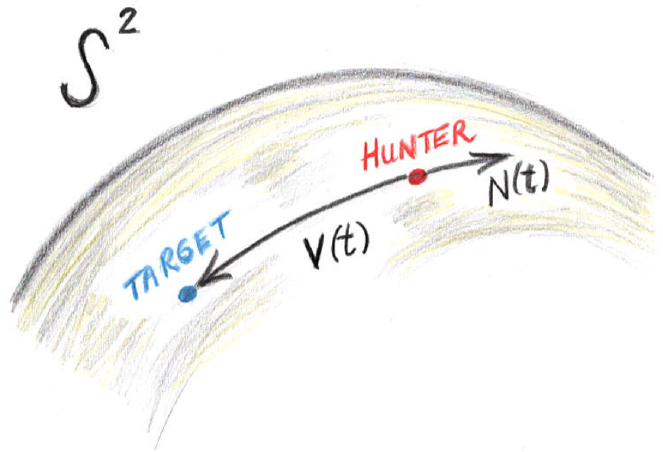}
    \caption{\centering A point at location $t$ on the red hunter fiber $F$ in $S^3$ \\ shoots a horizontal vector $V(t)$ to a point on the blue image image fiber $F_0'$, \\ and all this is projected down to the base $S^2$.}
    \label{fig:fig9}
 \end{figure}
The unit vector $N(t)$ shown in \autoref{fig:fig9}  is the gradient of the distance function between these points. Then
\begin{equation}
\frac{1}{2} \nabla \|V(t)\|^2 = \|V(t)\| N(t) = -V(t).
\end{equation}
Next, integrating along the red hunter fiber $F$, we compute the gradient of the energy $E$ of the vector field $V(t)$ along $F$ to be

\begin{equation}
\nabla E = \frac{1}{2\pi} \nabla \int\limits_F \frac{1}{2} \|V(t)\|^2 dt = \frac{1}{2\pi} \int\limits_F \frac{1}{2} \nabla \|V(t)\|^2 dt
= \frac{1}{2\pi} \int\limits_F -V(t) dt = -\overline{V}.
\end{equation}
This appears in Karcher (1977) as formula (1.2.1) for a slightly different energy function.
\end{proof}

\begin{lem}\label{covariant_prop}
The covariant derivative $\nabla_U \overline{V}$ of the balance field $\overline{V}$ can be made arbitrarily close to the vector $-U$ in the $C^0$ topology for sufficiently small open sets on $S^2$.
\end{lem}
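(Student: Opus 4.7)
The plan is to compute $\overline{V}$ explicitly as a vector field on $S^2$ near the target point $(u,v)$, recognize it as a Riemannian logarithm, and then differentiate.

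The first and key step would be to establish that the projections $p_*V(t) \in T_{(x,y)}S^2$ are the \emph{same} vector for every $t$ along the shooter fiber $F$. For each $t$, the horizontal geodesic in $S^3$ issuing from $F(t)$ with initial velocity $V(t)$ terminates at a point of $F_0'$, and since $p$ is a Riemannian submersion, this geodesic projects to a geodesic in $S^2$ from $(x,y) = p(F)$ to $(u,v) = p(F_0')$ whose initial velocity is $p_*V(t)$. For $(x,y)$ close enough to $(u,v)$, the minimizing $S^2$-geodesic between them is unique, so all of these projected geodesics coincide, forcing $p_*V(t)$ to be a single vector $W \in T_{(x,y)}S^2$ independent of $t$. (Note that the horizontal lifts $V(t)$ themselves rotate around the fiber as $t$ varies, because the horizontal distribution is not parallel along $F$; the content here is that this rotation is invisible after projection.)

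Given this, the circle-average collapses to
\begin{equation*}
\overline{V}(x,y) \;=\; \frac{1}{2\pi}\int_{0}^{2\pi} p_*V(t)\,dt \;=\; W,
\end{equation*}
and, tracking the length-doubling of $p$ on horizontal vectors, one verifies that $W$ is precisely the $S^2$-Riemannian logarithm $\log^{S^2}_{(x,y)}(u,v)$. Hence on a neighborhood of $(u,v)$ in $S^2$,
\begin{equation*}
\overline{V}(x,y) \;=\; \log^{S^2}_{(x,y)}(u,v),
\end{equation*}
a smooth vector field vanishing at $(u,v)$.

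From here the covariant derivative computation is standard: in normal coordinates on $S^2$ centered at $(u,v)$, the map $x \mapsto \log^{S^2}_x(u,v)$ agrees with $x \mapsto -x$ to first order, so $\nabla_U \overline{V}$ equals $-U$ \emph{exactly} at $x=(u,v)$. Since $\nabla\overline{V}$ is a smooth $(1,1)$-tensor field equal to $-\mathrm{id}$ at $(u,v)$, for any $\varepsilon > 0$ there is an open neighborhood $W \subset S^2$ of $(u,v)$ on which $\|\nabla_U \overline{V} + U\|_{C^0(W)} < \varepsilon\,\|U\|$ for every tangent vector $U$. The main potential obstacle is the first step --- a priori the lifts $V(t)$ vary nontrivially around the fiber --- and making it rigorous relies on the Riemannian submersion identity that horizontal geodesics project to geodesics, combined with uniqueness of short $S^2$-geodesics between nearby points.
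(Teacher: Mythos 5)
Your opening step --- that $p_*V(t)$ is the same vector for every $t$, so that $\overline{V}(x,y) = \log^{S^2}_{(x,y)}(u,v)$ --- is false in the setting of this lemma, and it is exactly where the real difficulty lives. The target $F_0' = f(F_0)$ is the image of a Hopf fiber under a diffeomorphism $f$ that is close to the identity but is \emph{not} an automorphism of $\HH$; consequently $F_0'$ is not vertical, and $p(F_0')$ is not a single point $(u,v)$ but a small closed curve near $p(F_0)$. The horizontal geodesic issuing from $F(t)$ hits a point $q(t)\in F_0'$ that genuinely depends on $t$, so the projected geodesics out of $(x,y)$ terminate at different points $p(q(t))$, and the vectors $p_*V(t)$ vary around the fiber. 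Your uniqueness-of-short-geodesics argument would force them to coincide only if all these geodesics shared a common endpoint, i.e.\ only if $F_0'$ were itself a Hopf fiber --- the degenerate case in which $\overline{V}$ is the exact logarithm field, the winning fiber is $p^{-1}(p(F_0'))$, and the slice construction is vacuous.

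The consequence is that $\overline{V}(x,y) = \frac{1}{2\pi}\int_0^{2\pi} \log^{S^2}_{(x,y)}\bigl(p(q_{(x,y)}(t))\bigr)\,dt$, and differentiating in $U$ produces, besides the term you compute (which does equal $-U$ at the center point, and which is the content of the paper's spherical computation $\nabla_U V = (\alpha/\sin\alpha)(-U)$), a second term coming from the motion of the impact points $q_{(x,y)}(t)$ as the hunter fiber moves. Controlling that second term is the entire second half of the paper's argument: it is governed by the holonomy of the horizontal distribution (the non-integrability of the tight contact structure) together with the closeness of $F_0'$ to vertical, and the paper shows the relevant horizontal displacement of the impact points is of order $\epsilon^2$ when the target is $\epsilon$-close to vertical in $C^1$. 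Your proof uses only the smallness of the neighborhood on $S^2$ and never invokes the closeness of $f$ to the identity, so it cannot see this correction --- and the correction is of the same order as the quantity $\nabla_U\overline{V}+U$ being estimated, so it cannot be absorbed by continuity. The $S^2$ computation of $\nabla_U\log_{(\cdot)}(u,v)$ itself is fine and matches the paper's; the gap is in the reduction to it.
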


We note that in the flat setting of the product bundle $S^1 \subset S^1 \times S^1 \times S^1 \rightarrow S^1 \times S^1$, the covariant derivative $\nabla_U \overline{V} = -U$ exactly. The deviation from this in the case of the Hopf bundle ${S^1 \subset S^3 \rightarrow S^2}$ is due to the curvature of the base and of the bundle itself. This bundle curvature appears as holonomy in the corresponding bundle of tangent $2$-planes orthogonal to the Hopf fibers, i.e., the standard tight contact structure on $S^3$. 

\begin{proof}
\begin{figure}[H]
    \centering
    \includegraphics[scale=0.6]{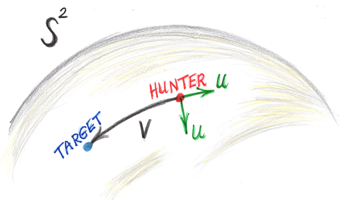}
    \caption{\centering How will the shooting vector $V$ change \\ when the hunter is moved a little while the target stays the same?}
    \label{fig:fig10}
\end{figure}
To begin, we ignore the Hopf bundle and focus on its base space $S^2$, and in \autoref{fig:fig10} show there a single hunter point in red, a single target point in blue, and a shooting vector $V$ from hunter to target, meaning that $\Exp_{\mathrm{hunter}} V = \mathrm{target}$. The issue is to quantify how the shooting vector $V$ will change when the hunter point moves a little bit away while the target point stays fixed.

The covariant derivative $\nabla_U V$ at the point $x$ depends linearly on the tangent vector $U$, so in the figure, it will suffice to learn the value of $\nabla_U V$ at $x$ when $U$ is parallel to $V$ at $x$ and when it is orthogonal to $V$ there.

When $U$ is parallel to $V$ at $x$, it is easy to see that $\nabla_U V = -U$.

When $U$ is orthogonal to $V$ at $x$, the curvature of $S^2$ will come into play, and we will see that the value of $\nabla_U V$ at $x$ is numerically a little larger than in the flat Euclidean plane $\mathbb{R}^2$:
\begin{equation}\label{eq_local_cov_uv}
\nabla_U V(x) = \left(\frac{\alpha}{\sin \alpha}\right)(-U(x))
\end{equation}
where $\alpha = |V(x)|$.
To prove \eqref{eq_local_cov_uv}, we key the discussion to \autoref{fig:fig11}, representing a neighborhood of the north pole $(0,0,1)$ of the unit 2-sphere in $\mathbb{R}^3$, with axes chosen as shown. 
\begin{figure}[H]
    \centering
    \includegraphics[scale=0.4]{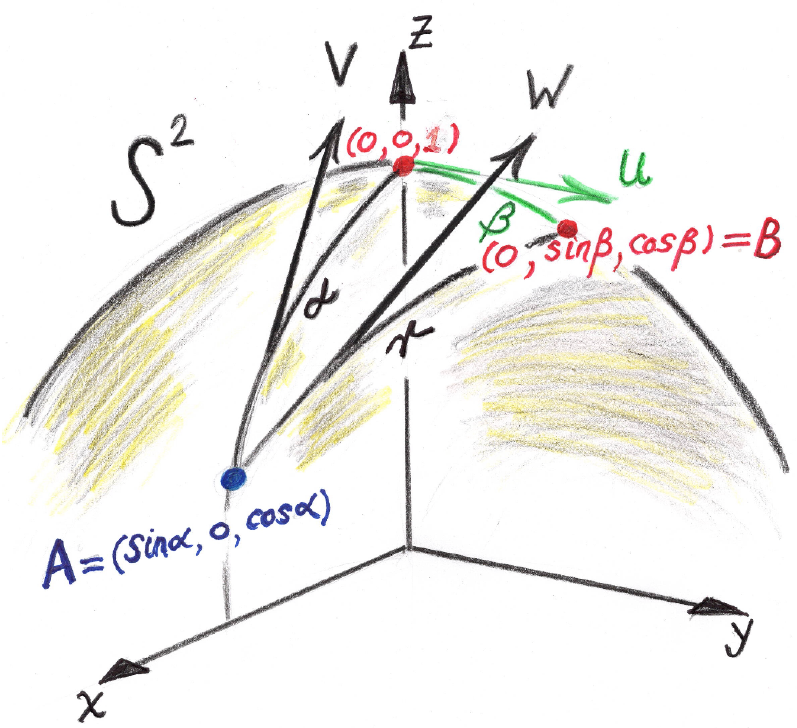}
    \caption{\centering The covariant derivative $\nabla_UV(x)$.}
    \label{fig:fig11}
\end{figure}
Note that we have, just for this discussion, reversed the arrows on the vectors $V$ and $W$ so that they point from target to hunter.

A great circle arc in the $xz$-plane runs from the target point $A = (\sin \alpha, 0, \cos \alpha)$ to the hunter point at the north pole $(0,0,1)$. This arc has length $\alpha$ and is the exponential image of the tangent vector $V$ as shown, so that $V = \alpha(-\cos\alpha, 0, \sin \alpha)$.

From the north pole, we move a bit to the right to the point $B = (0, \sin\beta, \cos\beta)$ in the $yz$-plane. The small arc of great circle between these two points has length $\beta$ and is the exponential image of the tangent vector $U$ as shown.

A great circle arc runs from fixed target point $A$ to perturbed hunter point $B$. It has length $\gamma$ and is the exponential image of the tangent vector $W$ as shown.

The spherical Pythagorean formula tells us that $(\cos\alpha)(\cos\beta) = \cos \gamma$. Direct computation gives the explicit formula
\begin{equation}
W = \left(\frac{\gamma}{\sin \gamma}\right)\big(\!-\!\sin\alpha\, \cos\alpha\, \cos\beta,\; \sin\beta,\; \sin 2\alpha\, \cos\beta\big).
\end{equation}

A quick check, using the spherical Pythagorean formula, confirms that $\|W\| = \gamma$. Another quick check, putting $\beta = 0$, confirms that $W|_{\beta=0} = V$. A direct computation shows that
\begin{equation}
\left.\frac{dW}{d\beta}\right|_{\beta=0} = \left(\frac{\alpha}{\sin\alpha}\right)(0, 1, 0).
\end{equation}
This leads quickly to the formula \eqref{eq_local_cov_uv}.  If we agree in advance to keep all the action within a hemisphere on $S^2$, then we have $\alpha = \|V(x)\| \leq \pi/2$, and hence
\begin{equation}
1 \leq \frac{\alpha}{\sin \alpha} \leq \frac{\pi}{2} \leq 1.6.
\end{equation}
Then, thanks to linearity of $\nabla_U V$ at $x$, we will have
\begin{equation}\label{eq_cov_uv_2}
\nabla_U V(x) = \lambda(-U(x)), \quad \text{with} \quad 1 \leq \lambda \leq 1.6.
\end{equation}
In particular, the covariant derivative $\nabla_U V$ can be made arbitrarily close in the $C^0$ topology to the vector $-U$ for fixed target and for hunters in a sufficiently small open subset of $S^2$.

Regarding formula \eqref{eq_local_cov_uv} above, we note that the boost $\alpha/\sin \alpha$ in apparent sidewise speed conforms to common sense, since a circle of radius $\alpha$ in the plane has circumference $2\pi \alpha$, while on the unit sphere it only has circumference $2\pi \sin\alpha$. Thus, with eyes at the center of the circle, a particle moving around the circle at given speed appears to have greater angular velocity on the unit sphere than in the plane by the factor of $\alpha/\sin\alpha$.

We pause now to discuss holonomy in the Hopf bundle, and after that will return to complete the proof of \autoref{covariant_prop}.

\newpage

\noindent \emph{Holonomy in the Hopf bundle.} The distribution of tangent 2-planes in $S^3$ orthogonal to the Hopf fibers is the standard tight contact structure on $S^3$. It is non-integrable, as depicted in \autoref{fig:fig12}, and our job will be to see how this affects the story at hand.
\begin{figure}[H]
    \centering
    \includegraphics[scale=0.5]{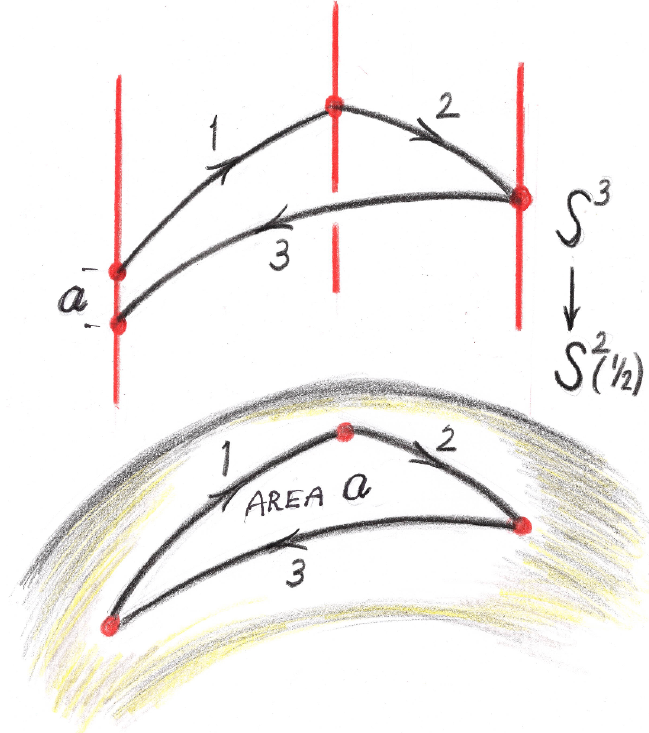}
    \caption{\centering Holonomy in the Hopf bundle.}
    \label{fig:fig12}
\end{figure}
In \autoref{fig:fig12}, we show three Hopf fibers in $S^3$, and horizontal geodesics between them, drawn in the labeled order. When we come back to the first geodesic, we are off a bit from the starting point. But when we project Hopf fibers and horizontal geodesics down to $S^2$, the triangle closes up.

If for convenience we take the base $S^2$ to have radius $1/2$, so that the projection map $p$ of the Hopf fibration is a Riemannian submersion, then the area of the triangle on $S^2$ equals the amount of separation between starting and ending points on the left-most Hopf fiber in $S^3$.

This is easily confirmed by using the left invariant one-forms $A^{\flat}, B^{\flat}, C^{\flat}$ dual to the left-invariant vector fields $A, B, C$ on $S^3$, where the Hopf fibers are the integral curves of $A$, and where \newline ${dA^{\flat} = -2B^{\flat} \wedge C^{\flat}}$, which gives the area 2-form on $S^2(1/2)$. 

\newpage

\noindent\emph{Back to our proof.} To finish the proof of \autoref{covariant_prop}, we turn our attention to what happens when Hopf fibers hunt for a fixed blue target image fiber. 

\begin{figure}[H]
    \centering
    \includegraphics[scale=0.5]{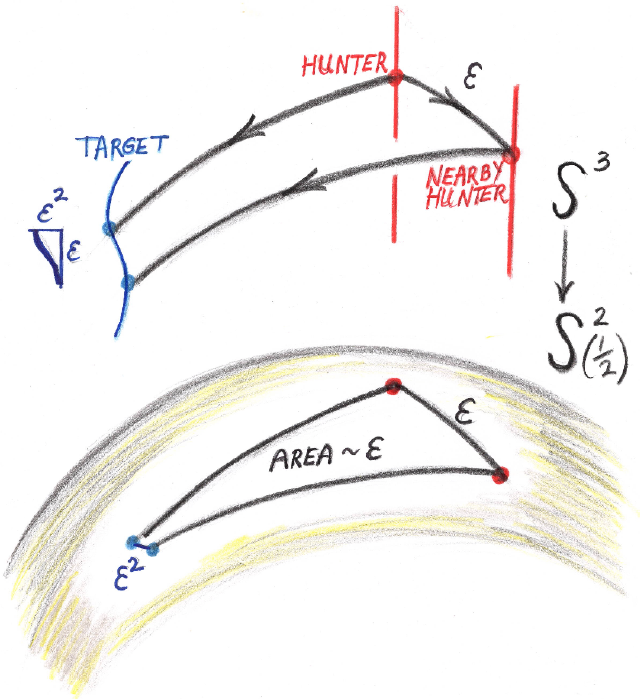}
    \caption{\centering More holonomy.}
    \label{fig:fig13}
\end{figure}

A point on the red hunter fiber shoots horizontally for the blue image target fiber. Then the hunter fiber moves sidewise horizontally by an amount $\epsilon$, and afterwards shoots again for the same target.

The two impact points on the blue image target fiber are separated by an amount proportional to $\epsilon$, as confirmed below. If the target fiber is within $\epsilon$ of vertical in the $C^1$ topology, then the separation between the two impact points is of order $\epsilon$ in the vertical direction, but only of order $\epsilon^2$ in the horizontal direction.

Projecting down to the two-sphere of radius $1/2$, we see an ``almost closed'' triangle with one side of length $\epsilon$, and at the other end a slight separation of order $\epsilon^2$. The triangle has area proportional to $\epsilon$, which confirms the separation between the two impact points up in $S^3$.

It follows that for each point on the hunter fiber $F$, say at location $t$, the covariant derivative $\nabla_U V(t)$ can be made arbitrarily close in the $C^0$ topology to the vector $-U$ for fixed target image fiber $F_0'$ and for hunters in a sufficiently small open subset of $S^2$. The understanding here is that to do this, we first make the blue image target fibers sufficiently close to vertical in the $C^\infty$ topology (though $C^1$ will do the job).

Now we prove that the covariant derivative $\nabla_U \overline{V}$ of the balance field $\overline{V}$ can be made arbitrarily close to the vector $-U$ in the $C^0$ topology for sufficiently small open subsets of $S^2$.

We start with a diffeomorphism $f$ of $S^3$ close to the identity, a Hopf fiber $F_0$ which will be the source of our target, the blue image fiber $f(F_0)$ which is our actual target, and a nearby Hopf fiber $F$ which is our hunter of the moment.

At each point of $F$ we have a horizontal vector $V(t)$ whose exponential image runs horizontally from that point to a point on the target fiber $f(F_0)$.

Then we let $U$ be a tangent vector to $S^2$ at the point $p(F)$, and lift $U$ to a projectable vector field of the same name along $F$, meaning that at each point of $F$, the vector $U$ there projects to the given tangent vector $U$ at the point $p(F)$ on $S^2$. Up in $S^3$, we have the $C^0$ approximation $\nabla_U V(t) \sim -U$. Next, we consider the average value of $V$ along $F$, $\overline{V} = \frac{1}{2\pi} \int\limits_F V(t) dt$, and take its covariant derivative with respect to $U$:
\begin{equation}
\nabla_U \overline{V} = \nabla_U \frac{1}{2\pi} \int\limits_F V(t) dt = \frac{1}{2\pi} \int\limits_F \nabla_U V(t) dt \sim \frac{1}{2\pi} \int\limits_F (-U) dt \sim -U
\end{equation}
as desired. This completes the proof of \autoref{covariant_prop}.
\end{proof}

With these lemmas in hand, we are ready to prove \autoref{hessian_prop}.

\begin{proof}[Proof of \autoref{hessian_prop}]
We will show that the Hessian of the energy function 
\begin{equation}
E(V \text{ on } F) = \frac{1}{2\pi} \int\limits_F \frac{1}{2} \|V(t)\|^2 dt
\end{equation}
is positive definite, and hence that $E$ is strongly convex for each choice of initial Hopf fiber $F_0$.

A coordinate-free expression for the Hessian is
\begin{equation}
(\mathrm{Hess}\, E)(U, W) = \langle \nabla_U (\nabla E), W \rangle.
\end{equation}
Therefore we have
\begin{equation}\label{eq_hessian_pos}
(\mathrm{Hess}\, E)(U, W) = \langle \nabla_U (\nabla E), W \rangle = \langle \nabla_U (-\overline{V}), W \rangle = -\langle \nabla_U \overline{V}, W \rangle \sim -\langle -U, W \rangle = \langle U, W \rangle,
\end{equation}
using \autoref{gradient_prop} and \autoref{covariant_prop}. Thus, $\mathrm{Hess}(E)$ is positive definite and the energy function $E$ is strongly convex for each choice of initial Hopf fiber $F_0$, as claimed.
\end{proof}





We finally build on these results to prove  \autoref{unique_min_prop}.

\begin{proof}[Proof of  \autoref{unique_min_prop}]
We will show that the unique minimum point $m$ of the energy function $E$ on an open subset of $S^2$ varies smoothly with the choice of initial Hopf fiber $F_0$.

In local coordinates $x_1, x_2$ on $S^2$, the Hessian of the energy function $E$ can be expressed in terms of first and second partial derivatives of $E$ together with the Christoffel symbols $\Gamma^k_{ij}$ of the Riemannian metric as
\begin{equation}
(\mathrm{Hess}\, E)\left(\frac{\partial}{\partial x_i}, \frac{\partial}{\partial x_j}\right) = \frac{\partial^2 E}{\partial x_i \partial x_j} - \Gamma^k_{ij} \frac{\partial E}{\partial x_k}
\end{equation}
where we understand summation on $k$.

If we use spherical coordinates on $S^2$, with singularities at the poles, then the Christoffel symbols are bounded outside neighborhoods of these poles. Hence choosing our coordinates so that the unique minimum point $m$ of the energy function $E$ lies on the equator, and staying very close to $m$ so that the first partials $\partial E/\partial x_1$ and $\partial E/\partial x_2$ are almost zero,  we see that positive definiteness of $\mathrm{Hess}(E)$ will imply positive definiteness of the matrix $(\partial^2 E/\partial x_i \partial x_j)$.

This in turn will let us use the Implicit Function Theorem to finish the argument, and to facilitate that, we will switch notation for local coordinates, replacing $x_1$ and $x_2$ by $x$ and $y$, and using subscript notation for partial derivatives. In particular, we replace the matrix $(\partial^2E/\partial{x_i}\partial{x_j})$ by the matrix
\begin{equation}
D^2E = 
\begin{pmatrix}
E_{xx} & E_{xy} \\
E_{yx} & E_{yy}
\end{pmatrix}.
\end{equation}
Then we apply the Implicit Function Theorem to the function
\begin{equation*}
\varphi_{u, v}(x, y) = \big(E_x, \; E_y\big)
\end{equation*}
to see that there will be a smooth function
\begin{equation}
\psi \colon (u, v) \mapsto (x, y) = (x(u, v), y(u, v))
\end{equation}
such that
\begin{equation}
E_x(u, v, x(u, v), y(u, v)) = 0 \qquad \text{and} \qquad E_y(u, v, x(u, v), y(u, v)) = 0.
\end{equation}
This will tell us that the location $(x(u, v), y(u, v))$ of the minimum of the energy function $E(u, v, x, y)$ depends smoothly on the parameters $(u, v)$.

Thus the minimum point $m$ of $E$ varies smoothly with the choice of initial Hopf fiber $F_0$, completing the proof of \autoref{unique_min_prop}.
\end{proof}

\subsection{Proof of the smooth slice theorem for $\diffst$}

Refer again to \autoref{fig:fig7}, where we start with a left coset $f \, \auth$ which is close to $\auth$, and have modified the representative $f$ so that it is close to the identity and moves points horizontally.


We must find a balanced diffeomorphism of $S^3$ in this same coset of $\auth$, and begin by focusing on a random fiber $F_0$ of the Hopf fibration $\HH$, and the corresponding blue image fiber $f(F_0)$, our target.

We then conduct our earlier described hunt, in which nearby Hopf fibers $F$ shoot horizontally for the fixed target $f(F_0)$ with a horizontal vector field $V$.

We consider the strongly convex energy function
$$E(V \text{ on } F) = \frac{1}{2\pi} \int\limits_F \frac{1}{2} \|V(t)\|^2 dt,$$
with the Hopf fiber $F$ sitting in $S^3$ directly above the unique minimum point $m$ of this energy function winning the hunt by shooting for the target with a balanced vector field.

By  \autoref{unique_min_prop}, this winning Hopf fiber $F$ depends smoothly on the choice of initial Hopf fiber $F_0$. The horizontal map from $F_0$ to $F$, when sufficiently close to the identity in the $C^\infty$ topology, will be a diffeomorphism, and thus an element of $\auth$.

Hence the balanced vector field $V$ from $F$ to $f(F_0)$ is smooth, and its Riemannian exponential
\begin{equation}
f_{\mathrm{bal}} = \Exp\, V
\end{equation}
lies in the same left coset of $\auth$ as did our starting diffeomorphism $f$ because both take the Hopf fibration to the same image fibration. In \autoref{fig:fig7}, the diffeomorphism $f_{\mathrm{bal}}$ takes the place of the question mark.

In this way, we have shown that the bundle $\auth \subset \diffst \to \mathrm{Fib}(S^3)$ admits a smooth slice consisting of balanced diffeomorphisms, and it then follows that it is a smooth fiber bundle of Fréchet manifolds, and that its base space $\mathrm{Fib}(S^3)$ is a smooth Fréchet manifold, modeled on the Fréchet space of balanced vector fields on $S^3$.

\section{Smooth Fr\'echet bundle structure}

The key to computing the homotopy type of $\Fib(S^3)$ is to leverage two related bundle structures, the ``big bundle'' from Theorem \ref{slice_thm},
\begin{equation}\label{big_bd}
 \aut \hookrightarrow \diff \xrightarrow{p}   \Fib(S^3)  
 \end{equation}
and a ``small bundle'',
\begin{equation}\label{small_bd}
 \vaut \hookrightarrow \aut \xrightarrow{p} \mathrm{Diff}^+(S^2),  
\end{equation}
 where the first map is the inclusion of the subgroup of automorphisms of the Hopf fibration that take each Hopf fiber to itself, with stretching and compressing allowed. Note that for any representative $f$ of a coset of $V\aut$ in $\aut$ the induced transformation $p(f)$ on the base space of the fiber bundle $H$ is the same.  This gives the map to the group of orientation preserving diffeomorphisms of the two sphere, $\mathrm{Diff}^+(S^2)$, which is also known to be a Fr\'echet manifold (see, for example, \cite{Ham}).



The last map in the sequence \eqref{small_bd} is easily seen to be onto $\mathrm{Diff}^+(S^2)$ because diffeomorphisms
of $S^2$ which are supported in small open sets can be easily lifted to automorphisms of $\HH$, and the family of such diffeomorphisms is known~\cite[Chapter $2$]{banyaga_book} to generate $\mathrm{Diff}^+(S^2)$.

To see that the last map in sequence \eqref{big_bd} is onto $\mathrm{Fib}(S^3)$, we need the following fact.

\begin{proposition}\label{prop_equivalent_fibrations}
Any two smooth fibrations of $S^3$ by simple closed curves are equivalent, in the sense that there is a diffeomorphism of $S^3$ taking one to the other.
\end{proposition}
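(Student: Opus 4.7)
The plan is to show that every smooth $S^1$-fibration of $S^3$ is isomorphic as a bundle to a Hopf fibration, and that two Hopf fibrations can always be intertwined by a diffeomorphism of $S^3$. First, for a smooth fibration $p \colon S^3 \to B$ by circles, the long exact sequence of homotopy groups
\begin{equation*}
0 = \pi_2(S^3) \to \pi_2(B) \to \pi_1(S^1) = \mathbb{Z} \to \pi_1(S^3) = 0 \to \pi_1(B) \to 0
\end{equation*}
forces $\pi_1(B) = 0$ and $\pi_2(B) \cong \mathbb{Z}$, so the closed smooth 2-manifold $B$ is diffeomorphic to $S^2$.

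Next, I would classify the bundle. The structure group $\mathrm{Diff}(S^1)$ deformation retracts onto $O(2)$; because $S^2$ is simply connected, the fibers can be coherently oriented, reducing the structure group further to $SO(2) = S^1$. Smooth principal $S^1$-bundles over $S^2$ are classified by the Euler class $e \in H^2(S^2;\mathbb{Z}) \cong \mathbb{Z}$. The Gysin sequence of the bundle contains the fragment
\begin{equation*}
H^0(S^2) \xrightarrow{\cup e} H^2(S^2) \to H^2(S^3) = 0,
\end{equation*}
so cupping with $e$ must be surjective, forcing $|e| = 1$. Hence every such bundle is equivalent as a principal $S^1$-bundle to a Hopf bundle.

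Finally, a bundle isomorphism is by definition a diffeomorphism of total spaces carrying fibers to fibers, which supplies precisely the kind of diffeomorphism of $S^3$ required by the proposition. When the two Euler classes have the same sign, the conclusion is immediate. When they have opposite signs, the two underlying unoriented fibrations correspond to the left- and right-handed Hopf fibrations $\HH$ and $\HH^*$ of the paper's earlier discussion, and an orientation-reversing diffeomorphism such as quaternion conjugation $x \mapsto \bar{x}$ exchanges these two, completing the argument.

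The main subtlety I anticipate is the smooth reduction of the structure group from $\mathrm{Diff}(S^1)$ to $S^1$: one must select a smooth global unit tangent vector field along the fibers from a continuous orientation. This follows from the usual partition-of-unity and averaging arguments using the ambient Riemannian metric on $S^3$, but it is the one point where I would slow down and argue carefully rather than appealing to a topological classification black box.
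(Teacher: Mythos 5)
Your proof is correct, but it takes a genuinely different route from the paper's. The paper stays with the full structure group $\mathrm{Diff}(S^1)$: it cites the known \emph{topological} equivalence of the two circle bundles, passes to the associated principal $\mathrm{Diff}(S^1)$-bundles, and then invokes the theorem of M\"uller and Wockel that for principal bundles with Fr\'echet Lie structure group, continuous equivalence implies smooth equivalence. You instead carry out the smooth classification directly: identify the base as $S^2$ via the homotopy exact sequence (a step the paper takes for granted), reduce the structure group smoothly to $S^1$, pin down $|e|=1$ with the Gysin sequence, and handle the sign by conjugation. This is essentially the ``adaptation of the topological proof to the smooth setting'' that the paper's remark alludes to. The paper's argument buys brevity and generality at the cost of a substantial black box; yours buys self-containedness at the cost of the one delicate step you rightly flag. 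For that step, no averaging is actually needed: once a coherent fiber orientation is chosen (automatic over a simply connected base), the positively oriented unit tangent field and the fiberwise length function are already smooth, and the rescaled arc-length flow is a free smooth $S^1$-action realizing the fibration as a principal bundle. One small point of care: the sign of $e$ depends on chosen orientations of fiber and base, so the clean statement is that $|e|=1$ always, the two resulting oriented bundles are the two chiralities of Hopf fibration, and these are exchanged by $x \mapsto \bar{x}$, exactly as you say.
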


\begin{proof}
Given any smooth fiber bundle $S^1 \hookrightarrow S^3 \to S^2$, we consider $\mathrm{Diff}(S^1)$ as its group, and note that two such bundles are smoothly equivalent if and only if their associated principal $\mathrm{Diff}(S^1)$ bundles are smoothly equivalent.

But the two $S^1$ bundles are known to be topologically equivalent, so their associated principal bundles are at least topologically equivalent as well. Then the theorem of Müller and Wockel \cite{smoothcontbundles} kicks in to say that for principal bundles structured by Fr\'echet Lie groups, smooth and continuous equivalences are
the same. So our two original smooth $S^1$ bundles must also be smoothly equivalent, as desired.
\end{proof}

\begin{rem}
In the topological category, \autoref{prop_equivalent_fibrations} follows immediately from the classification of $S^1$ bundles over $S^2$. It is also possible to adapt the proof from the topological to the smooth setting in this particular case.
\end{rem}

\newpage

\subsection{The homotopy type of the moduli space}

We are ready to prove  \autoref{mainthm} and \autoref{better_main_thm}, which determine the homotopy type and the homeomorphism type of the space of smooth fibrations of $S^3$ by simple closed curves.

We give the proof of \autoref{mainthm} in the case where the fibers are oriented; an analogous proof gives the conclusion when they are unoriented.

\begin{proof}[Proof of \autoref{mainthm}] We begin by comparing our two Fréchet fiber bundles with finite-dimensional sub-bundles, which will help us towards our goal of determining the homotopy type of the base space
$\mathrm{Fib}(S^3)$ of our big bundle in \eqref{big_bd}.

\begin{equation}\label{eq_small_diagram}
\begin{tikzcd}
\vaut \arrow[r, phantom,"\subset"]
&
\auth
\arrow[r, phantom,"\rightarrow"]
&
\mathrm{Diff}^+(S^2)
\\
S^1 \arrow[r, phantom,"\subset"]
\arrow[u, phantom, sloped, "\subset"]
&
U(2) \arrow[r, phantom,"\rightarrow"]
\arrow[u, phantom, sloped, "\subset"]
&
SO(3)
\arrow[u, phantom, sloped, "\subset"]
\end{tikzcd}
\end{equation}

On the bottom row of \eqref{eq_small_diagram}, the circle subgroup $S^1$ of the unitary group $U(2)$ consists of all rotations of $S^3$ which turn each Hopf circle within itself by the same amount, while $\mathrm{SO}(3)$ as usual denotes the group of orientation-preserving rigid motions of the two-sphere.

\begin{equation}\label{eq_big_diagram}
\begin{tikzcd}
\auth \arrow[r, phantom,"\subset"]
&
\diffst
\arrow[r, phantom,"\rightarrow"]
&
\mathrm{Fib}(S^3) 
\\
U(2) \arrow[r, phantom,"\subset"]
\arrow[u, phantom, sloped, "\subset"]
&
O(4) \arrow[r, phantom,"\rightarrow"]
\arrow[u, phantom, sloped, "\subset"]
&
S^2 \sqcup S^2
\arrow[u, phantom, sloped, "\subset"]
\end{tikzcd}
\end{equation}

On the bottom row of \eqref{eq_big_diagram}, the unitary subgroup $U(2) \subset O(4)$ consists of the rigid motions of the
$3$-sphere which permute the oriented fibers of the Hopf fibration $H$, while $S^2 \sqcup S^2$ denotes the set of all possible Hopf fibrations of $S^3$ by oriented parallel great circles. These rigid symmetries of the Hopf fibration are discussed in~\cite{gluckwarnerziller}.

Now we compare the infinite-dimensional bundles above with their finite-dimensional sub-bundles by comparing corresponding terms.

We focus first on the small bundles on lines \eqref{small_bd} and \eqref{eq_small_diagram}, so that we can learn the shape of
the total space $\aut$ on \eqref{small_bd}, which will then reappear as the fiber of the big bundle on line \eqref{big_bd}.

To begin, consider the inclusion of fibers, $S^1 \subset \vaut$.

It is well known that the space $\mathrm{Homeo}^+(S^1)$ of orientation-preserving homeomorphisms of the circle $S^1$ with the compact-open topology deformation retracts to its subspace $\mathrm{SO}(2)$ of rotations; see for example \cite[Prop. $4.2$]{ghysCircle}. The proof by straight line homotopy works in the differentiable case as well, so that the subset $\mathrm{Diff}^+(S^1)$ in the smooth topology
deformation retracts within itself to $\mathrm{SO}(2)$.

Applying this lemma simultaneously on each Hopf circle, we begin by deforming a diffeomorphism $f$ within $\vaut$ so that at every moment, it takes each Hopf circle within itself, and
at the end simply rotates each of these by some amount, which can vary from circle to circle. We note that the uniformity of this deformation process guarantees the smoothness required to
keep it within $\vaut$.

But every smooth map from $S^2$ to $S^1$ is smoothly null-homotopic, so a further deformation
of $f$ ends with a diffeomorphism of $S^3$ which rotates each Hopf circle by the same amount,
meaning that it lies in our circle subgroup $S^1$ of $U(2)$.
Hence the inclusion of fibers, $S^1 \subset \vaut$, is a homotopy equivalence.

We bring the proof of  \autoref{mainthm} to a close, as follows.

The inclusion \eqref{eq_small_diagram} of our small fiber bundles has homotopy equivalences of fibers as
noted above, and of base spaces by Smale's theorem \cite{Smale}, so that in the induced maps of
their long exact homotopy sequences, two-thirds of these maps are isomorphisms. It then follows from the Five Lemma that the remaining maps are also isomorphisms, and we learn
that the inclusion $U(2) \subset \aut$ of their total spaces is at least a weak homotopy
equivalence.

Then the inclusion of fiber bundles \eqref{eq_big_diagram} has a weak homotopy equivalence of fibers by what we just said above, and a homotopy equivalence of total spaces by Hatcher's
theorem~\cite{hatcher_smale}, so that once again applying the Five Lemma to their long exact homotopy
sequences, we learn that the inclusion $S^2 \sqcup S^2 \subset \mathrm{Fib}(S^3)$ of their base spaces is a weak
homotopy equivalence.

It remains to promote the weak homotopy equivalence $S^2 \sqcup S^2 \subset \mathrm{Fib}(S^3)$ to a homotopy
equivalence, and to do this we need to confirm that our moduli space, already known to be
a Fr\'echet manifold, is metrizable.

We know from  \autoref{separable_metrizable} that $\diffst$ is separable and metrizable. This in turn implies that $\diffst$ is second countable: we just take a countable dense subset and use each of its points as the center of open balls with rational radii.

Now the projection map $P \colon \diffst \to \mathrm{Fib}(S^3)$ is an open map. That's because, given an open subset $U$ of $\diffst$, its ``saturation'' $U\,\mathrm{Aut}(H)$ is also open. And then the projection of this to $\mathrm{Fib}(S^3)$ is open because $\mathrm{Fib}(S^3)$ has the quotient topology. Thus
a countable basis for the topology of $\diffst$ projects to a countable basis for the topology of $\mathrm{Fib}(S^3)$. Hence $\mathrm{Fib}(S^3)$ is second countable. But then by  \autoref{component_metrizable}, our moduli space $\mathrm{Fib}(S^3)$ is metrizable.

So the inclusion $S^2 \sqcup S^2 \subset \mathrm{Fib}(S^3)$ is a weak homotopy equivalence between metrizable Fr\'echet manifolds, and hence by  \autoref{homotopy_upgrade} it is in fact a homotopy equivalence. This completes the proof of \autoref{mainthm} in the case of oriented fibers, and similarly in the case of unoriented fibers.
\end{proof}

From this we deduce the even stronger \autoref{better_main_thm}.

\begin{proof}[Proof of \autoref{better_main_thm}] By \cite{anderson1966}, each infinite-dimensional separable Fréchet vector space is homeomorphic, albeit not linearly, to the unique separable Hilbert vector space, which in turn is homeomorphic to a countably infinite product of real lines.

By \cite{henderson_schori_1970}, two infinite-dimensional separable metrizable Fr\'echet manifolds are homeomorphic if and only if they have the same homotopy type.

Thus the homotopy equivalence $S^2 \sqcup S^2 \subset \mathrm{Fib}(S^3)$ leads to the desired homeomorphism
\begin{equation*} 
(S^2 \sqcup S^2) \times \mathbb{R}^{\infty} \simeq \mathrm{Fib}(S^3).
\end{equation*}

\end{proof}

\setlength{\parskip}{0pt}

\newpage

\bibliographystyle{amsalpha}
\bibliography{biblioSF}

\end{document}